\documentclass[a4paper, reqno, 11pt]{amsart}
\usepackage[utf8]{inputenc}
\usepackage{amsmath}
\usepackage{amssymb}
\usepackage{amsthm}
\usepackage[dvipsnames]{xcolor}
\usepackage{comment}

\newtheorem{theorem}{Theorem}[section]

\newtheorem{lemma}[theorem]{Lemma}

\newtheorem{proposition}[theorem]{Proposition}
\newtheorem{remark}[theorem]{Remark}
\theoremstyle{definition}
\newtheorem{definition}{Definition}[section]

\DeclareMathOperator\supp{supp}

\usepackage[luatex,colorlinks=true, linkcolor=WildStrawberry, citecolor=PineGreen]{hyperref}

\subjclass[MSC 2020]{35L05, 35R60}

\keywords{Wave equation, $L^p$ estimates, random data}

\title[A pathological set for a.s. properties of Gaussian measures]{A pathological set regarding the propagation of almost sure properties of Gaussian measures}
\author{Pablo Merino}
\address[P. Merino]{Basque Center for Applied Mathematics, 48009 Bilbao, Basque Country}
\date{}

\begin{document}
	\maketitle
	
	\begin{abstract}
	We provide a complementary result to the quasi-invariance of Gaussian measures supported on Sobolev spaces with high regularity under the dynamics of the three-dimensional periodic defocusing nonlinear wave equation from Gunaratnam-Oh-Tzvetkov-Weber (2022). Namely, given $p$ and $\sigma$ large enough, we prove the existence of dense sets of Sobolev spaces $W^{\sigma,p}(\mathbb{T}^3)$ which do not preserve the regularity $\sigma$ throughout the aforementioned dynamics. This is in sharp contrast with the propagation under the flow of almost sure properties.
	\end{abstract}
	
	
	

	\section{Introduction}\label{sec:intro}
	Let $\sigma,p \geq 1$. We consider the Cauchy problem associated to the defocusing cubic nonlinear wave equation on $\mathbb{T}^3 = (\mathbb{R}/\mathbb{Z})^3$:
		\begin{equation}\label{eq:cauchywave}
			\left\{
			\begin{array}{l}
				\partial_{tt} u - \Delta u + u^3 = 0,
				\\
				u(x,0) = 0,  \ \ \partial_tu(x,0) = g(x),
			\end{array}
			\right.
		\end{equation}
		where $u:\mathbb{T}^3 \times \mathbb{R} \rightarrow \mathbb{R}$ is the unknown function. The initial datum $g$ in \eqref{eq:cauchywave} is an element of the set
	\begin{align}\label{eq:family-S}
	S_{\lambda} = C^{\infty}(\mathbb{T}^3) + \{\delta f_{\lambda} : \delta > 0\},
	\end{align}
	where $f_{\lambda}$, for suitable $\lambda$ and $p$ (see Lemma \ref{lemma:flambda-wsigmap-per} below), are elements of $W^{\sigma-1,p}(\mathbb{T}^3)$ given by the Fourier coefficients (see \eqref{eq:flambdadef-1})
\begin{align}\label{eq:flambdadef}
\widehat{f_{\lambda}}(k) =  h(k) \langle k \rangle^{-\sigma+1} |k|^{-\lambda}, \quad k \in \mathbb{Z}^3,
\end{align}	
for $h$ a suitable function in $C^{\infty}(\mathbb{R}^3)$ which is zero near the origin (see \eqref{eq:def-h}). Given $d \in \mathbb{N}$, $\alpha \geq 0$ and $q \in [1,\infty]$, we denote by $W^{\alpha,q}(\mathbb{T}^d)$ (resp. $W^{\alpha,q}(\mathbb{R}^d)$) the set of distributions on $\mathbb{T}^d$ which weak derivatives of order up to $\alpha$ belong to $L^q(\mathbb{T}^d)$ (resp. $L^q(\mathbb{R}^d)$), endowed with the norm
        \begin{align*}
            \|f\|_{W^{\alpha,q}(\mathbb{T}^d)} := \|(1-\Delta)^{\frac{\alpha}{2}}f\|_{L^q(\mathbb{T}^d)} \quad (\text{resp. } \|f\|_{W^{\alpha,q}(\mathbb{R}^d)} := \|(1-\Delta)^{\frac{\alpha}{2}}f\|_{L^q(\mathbb{R}^d)} ).
        \end{align*} 
        For the same values of $\lambda$ and $p$, from the density of $C^{\infty}(\mathbb{T}^3)$ in $W^{\sigma-1,p}(\mathbb{T}^3)$ it follows that $S_{\lambda}$ is a dense subset of $W^{\sigma-1,p}(\mathbb{T}^3)$.  The homogeneous part of the solution of \eqref{eq:cauchywave} is given by
\begin{align}\label{eq:linearflow-flambda}
S(t)(0,g) = \frac{\sin(t|D|)}{|D|}g = \frac{e^{it|D|} - e^{-it|D|}}{2i|D|}g,
\end{align} 
where we denote $D = \frac{1}{i} \nabla$ (for $\nabla$ the gradient in $x$), $|D| = \sqrt{-\Delta}$ and $\langle D \rangle = (1-\Delta)^{\frac{1}{2}}$.

\hfill \break

In works \cite{Peral1980} and \cite{Miyachi1980}, Peral and Miyachi independently proved that, for fixed nonzero times, $L^p$ estimates on $\mathbb{R}^d$ exhibit a loss of derivatives under the linear wave evolution. Namely, they showed that, given $t \neq 0$ fixed,
\begin{align*}
\langle D \rangle^{-\sigma}e^{it|D|} \text{ is bounded on } L^p(\mathbb{R}^d) \Longleftrightarrow \sigma \geq (d-1) \Big| \frac{1}{p} - \frac{1}{2} \Big|.
\end{align*}
Regarding the necessary condition of this statement, in \cite{Peral1980} the author used elements of the form of $F_{\lambda}$ defined in \ref{def:g-lambda} (which periodization yields $f_{\lambda}$, see Section \ref{sec:localization}) as counterexample for $L^p$-based derivative regularity propagation. 
\hfill \break

Our goals are to extend this loss of $L^p$-based regularity to the corresponding periodic nonlinear flow with $d = 3$ and suitable $p \neq 2$ through a periodization of such elements $F_{\lambda}$; and from this to prove that $S_{\lambda}$ is a dense subset of $W^{\sigma-1,p}(\mathbb{T}^3)$ which does not preserve $W^{\sigma,p}$ regularity under the flow of \eqref{eq:cauchywave}, for $p > 2$ large enough. This is pathological in the sense that a proper randomization of initial data (see \eqref{eq:randomid}) ensures this propagation almost surely with respect to the Gaussian measure induced by such random data, as it was proved in \cite{GOTW22} by showing the quasi-invariance of this measure under the flow of \eqref{eq:cauchywave}.

The (deterministic) analysis of these simple functions $F_{\lambda}$ let us identify a range of $p$, far enough from $2$, for which the linear wave evolution does not propagate the aforementioned $L^p$-based regularity on $\mathbb{R}^d$. Furthermore, making this analysis explicit allows to use a periodization argument in order to extend the same result to $\mathbb{T}^d$. Notice that, if $p = 2$, we recover the usual conservation of energy. A deeper analysis of this family of functions can be found in \cite{W65}. 

Once we identify the range of $p$ and $\lambda$ with loss of regularity for $S(t)(0,f_{\lambda})$ and suitable $t \neq 0$ (see Theorem \ref{thm:nonpropag-linear} below), in order to extend this behaviour to the three-dimensional nonlinear flow it suffices to bound the quantity ($T > 0$)
	\begin{align}\label{eq:quantity-to-control}
		\left\| \int_0^t \frac{\sin((t-\tau)|D|)}{|D|} u^3(\tau) d\tau \right\|_{L^{\infty}_T W^{\sigma,p}(\mathbb{T}^3)}.
	\end{align} 
	
	Namely, we look for an upper bound which depends essentially on the $H^r(\mathbb{T}^3)$ norm of the solution in a suitable way, for values of $r$ such that the initial datum is in $\vec{H}^r(\mathbb{T}^3) := H^r(\mathbb{T}^3) \times H^{r-1}(\mathbb{T}^3)$. This will let us apply the well-posedness theory for data on these spaces, see for instance \cite{Tzvetkov2019-notes}. 
	
All in all, this work deals with deterministic analysis, although the motivation (given in  Section \ref{sec:prob}) is deeply probabilistic. Our main contribution is the following.

\begin{theorem}\label{thm:nonpropag-nonlinear}
Let $p \geq 3$, $\lambda \in (3-\frac{3}{p},3)$ and $\sigma > 1$. Then $S_{\lambda}$ is a dense subset of $W^{\sigma-1,p}(\mathbb{T}^3)$ satisfying the following property:\\

For any $g \in S_{\lambda}$, the Cauchy problem \eqref{eq:cauchywave} admits a unique global solution $u$ in $C(\mathbb{R},H^{\sigma + \frac{1}{2}}(\mathbb{T}^3))$ such that, for any $t \in (-1,1) \setminus \{0\}$,
\begin{align*}
u(\cdot,t) \notin W^{\sigma,p}(\mathbb{T}^3).
\end{align*}
\end{theorem}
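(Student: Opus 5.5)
The plan is to write the solution as linear part plus Duhamel term,
\begin{align*}
u(t) = \frac{\sin(t|D|)}{|D|}g - \int_0^t \frac{\sin((t-\tau)|D|)}{|D|} u^3(\tau)\, d\tau =: u_{\mathrm{lin}}(t) - N(u)(t),
\end{align*}
and to show three things for $t \in (-1,1)\setminus\{0\}$: that $u_{\mathrm{lin}}(t) \notin W^{\sigma,p}(\mathbb{T}^3)$, that $N(u)(t) \in W^{\sigma,p}(\mathbb{T}^3)$, and hence that $u(t)\notin W^{\sigma,p}$. Density of $S_\lambda$ in $W^{\sigma-1,p}$ was already observed in the introduction, since $f_\lambda \in W^{\sigma-1,p}$ for $\lambda > 3-\frac3p$ by Lemma \ref{lemma:flambda-wsigmap-per} and $C^\infty$ is dense.

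\textbf{Step 1: well-posedness.} Write $g = \varphi + \delta f_\lambda$ with $\varphi \in C^\infty$. The Fourier coefficients of $f_\lambda$ are $O(\langle k\rangle^{-\sigma+1-\lambda})$, so $f_\lambda \in H^{s}$ for all $s < \sigma-1+\lambda-\frac32$. Since $\lambda > 3-\frac3p \geq 2$ when $p \geq 3$, this gives $f_\lambda \in H^{\sigma-\frac12}$ with room to spare. Hence $(0,g) \in \vec H^{\sigma+\frac12}$, and $\sigma+\frac12 > \frac32 \geq 1$. The standard energy-subcritical theory for the defocusing cubic NLW on $\mathbb{T}^3$ (global $H^1$ solutions via energy conservation, plus persistence of higher regularity, as in \cite{Tzvetkov2019-notes}) then gives a unique global solution $u \in C(\mathbb{R}, H^{\sigma+\frac12})$.

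\textbf{Step 2: linear part.} The term $\frac{\sin(t|D|)}{|D|}\varphi$ is smooth, so everything reduces to $\frac{\sin(t|D|)}{|D|}f_\lambda \notin W^{\sigma,p}$, i.e. $\sin(t|D|)\langle D\rangle^{\sigma}|D|^{-1} f_\lambda \notin L^p$. I would invoke Theorem \ref{thm:nonpropag-linear} (the periodized Peral counterexample) with exactly this range of $\lambda$ and $p$ and $0<|t|<1$. The restriction $|t|<1$ is presumably what keeps the singular sphere $|x|=|t|$ of the wave applied to the localized $F_\lambda$ inside one period cell, so that periodization does not cancel it. Heuristically, the linear evolution concentrates the singularity $|x|^{\lambda-3}$ (roughly) of $F_\lambda$ onto that sphere, where it behaves like $\big||x|-|t|\big|^{\lambda-2-1}$-type with one fewer degree of smoothing. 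This profile fails to be in $L^p$ exactly when $\lambda < 3$ combined with the lower bound.

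\textbf{Step 3: nonlinear part, the main obstacle.} I need $N(u)(t) \in W^{\sigma,p}$. First bound
\begin{align*}
\|N(u)\|_{L^\infty_T W^{\sigma,p}} \lesssim \|N(u)\|_{L^\infty_T H^{s}}
\end{align*}
via Sobolev embedding $H^{s}\subset W^{\sigma,p}$ with $s = \sigma + \frac32 - \frac3p$. Since $\langle D\rangle^{-1}\sin$ gains one derivative, it suffices to have $u^3 \in L^1_T H^{s-1} = L^1_T H^{\sigma+\frac12-\frac3p}$. Now $u \in C H^{\sigma+\frac12}$, which is an algebra because $\sigma+\frac12 > \frac32$. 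So $u^3 \in C H^{\sigma+\frac12} \subset C H^{\sigma+\frac12-\frac3p}$, and in fact there is slack of $\frac3p$ derivatives. One must double-check that the time interval $[0,t]$ with $|t|<1$ gives finite bounds. It does, since the global solution is continuous in $H^{\sigma+\frac12}$. If the crude embedding loses too much for some parameter choice, I would fall back on a fractional Leibniz (Kato--Ponce) estimate in $W^{\sigma-1,p}$ for $u^3$, using $L^\infty$ bounds from $H^{\sigma+\frac12}\subset L^\infty$, together with the $L^p$-boundedness of the Duhamel propagator at the cost of $(d-1)|\frac1p-\frac12|$ derivatives. The slack coming from the $H^{\sigma+\frac12}$ regularity should absorb this loss.

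Concluding, $u(t) = u_{\mathrm{lin}}(t) - N(u)(t)$ with $N(u)(t)\in W^{\sigma,p}$ and $u_{\mathrm{lin}}(t)\notin W^{\sigma,p}$, so $u(t)\notin W^{\sigma,p}(\mathbb{T}^3)$.
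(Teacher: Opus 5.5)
Your Steps 1 and 3 are fine. Step 3 is in substance the paper's argument: its periodized Strichartz bound, used with $q=\infty$, is exactly the energy estimate combined with $H^{\sigma+\frac32-\frac3p}(\mathbb{T}^3)\subset W^{\sigma,p}(\mathbb{T}^3)$. The gap is Step 2, and it is fatal: the claim $\frac{\sin(t|D|)}{|D|}f_\lambda\notin W^{\sigma,p}(\mathbb{T}^3)$ is false when $p\ge3$ and $\lambda>3-\frac3p$. Citing Theorem \ref{thm:nonpropag-linear}, which is the same claim and the paper's own step, does not close it.

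For $k\neq0$ the Fourier coefficients of $\langle D\rangle^{\sigma}\frac{\sin(t|D|)}{|D|}f_\lambda$ are $\frac{\langle k\rangle}{|k|}\sin(t|k|)\,|k|^{-\lambda}$, of modulus at most $\sqrt2\,|k|^{-\lambda}$. The sum $\sum_{k\neq0}|k|^{-\lambda p'}$ is finite precisely because $\lambda p'>3$, i.e. $\lambda>3-\frac3p$. Hausdorff--Young on $\mathbb{T}^3$ (with $p\ge2$) then gives $S(t)(0,f_\lambda)\in W^{\sigma,p}(\mathbb{T}^3)$ for every $t\in\mathbb{R}$. This is the very argument the paper uses in Lemma \ref{lemma:flambda-wsigmap-per} to place $f_\lambda$ in $W^{\sigma-1,p}$.

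Your own tools say the same about the nonlinear solution. By Step 1 and persistence of regularity (as the paper itself notes), $u\in C(\mathbb{R},H^r(\mathbb{T}^3))$ for every $r<\sigma+\lambda-\frac32$. The inequality $\sigma+\lambda-\frac32>\sigma+\frac32-\frac3p$ is equivalent to $\lambda>3-\frac3p$, so the embedding you invoke in Step 3 already puts $u(t)$ in $W^{\sigma,p}(\mathbb{T}^3)$ for every $t$ and every $g\in S_\lambda$. The conclusion of the statement therefore fails, and no route can prove it as written.

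Your Step 2 heuristic shows where the intuition goes wrong. Write $G=\langle D\rangle^{\sigma-1}F_\lambda\sim|x|^{\lambda-3}$ near $0$. The order-zero operator $\langle D\rangle\frac{\sin(t|D|)}{|D|}$ moves this singularity to the sphere $|x|=|t|$ with profile $\bigl||x|-|t|\bigr|^{\lambda-\frac{d+1}{2}}=\bigl||x|-|t|\bigr|^{\lambda-2}$, not exponent $\lambda-3$. That profile is bounded since $\lambda>2$.

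In $d=3$ this is explicit. Kirchhoff's formula gives $\nabla\frac{\sin(t|D|)}{|D|}G=t\,M_{|t|}(\nabla G)$, with $M_r$ the average over spheres of radius $r$. The average of $|y|^{\lambda-4}$ over the sphere of radius $|t|$ centred at $x$ equals $\frac{(|x|+|t|)^{\lambda-2}-\bigl||x|-|t|\bigr|^{\lambda-2}}{2(\lambda-2)|x|\,|t|}$, which is bounded.

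Nothing singular remains at the origin either. For $|x|<|t|$ the phase $x\cdot\xi+t|\xi|$ has gradient of size at least $|t|-|x|$, so the non-stationary phase argument from the proof of Theorem \ref{thm:flambda-tnotzero} gives smoothness there. The factor $|x|^{-\frac{d-1}{2}}$ to which the proof of Theorem \ref{thm:nonpropag-linear-rn} attributes the divergence for $p\ge\frac{2d}{d-1}$ is an artifact. It comes from applying the large-argument Bessel asymptotics where $\rho|x|\lesssim1$, and it is cancelled by the remainder $c_\lambda$, which is only bounded by the same power.

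For $p>2$, the Peral--Miyachi loss is produced by focusing, where a spherical singularity collapses to a point. A genuine counterexample therefore needs different data, for instance coefficients carrying phases such as $\sin(t_0|k|)$ together with $\lambda\le3-\frac3p$. The data $f_\lambda$ with $\lambda>3-\frac3p$ cannot work.
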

	
As we explained before, Theorem \ref{thm:nonpropag-nonlinear} will essentially follow from the next result, which is based on the analysis of the linear flow of \eqref{eq:cauchywave} on $\mathbb{R}^d$ and a periodization argument. Regarding the Euclidean linear case, we only make explicit the analysis from \cite{Peral1980} and \cite{Miyachi1980}.
	\begin{theorem}\label{thm:nonpropag-linear}
		Let $d \geq 2$, $p \neq 2$ and $\lambda \in (\max\{n-\frac{d}{p},\frac{d-1}{2}\},d)$ (satisfying $\frac{d-1}{2} - \lambda \notin - \mathbb{N}$) such that one of the following conditions holds:
\begin{itemize}
\item $p \geq \frac{2d}{d-1}$.
\item $p < \frac{2d}{d+1}$ and $\lambda \leq \frac{d+1}{2} - \frac{1}{p}$.  
\end{itemize}
Then $f_{\lambda} \in W^{\sigma-1,p}(\mathbb{T}^d)$ and
		\begin{align*}
			S(t)(0,f_{\lambda}) \notin W^{\sigma,p}(\mathbb{T}^d)
		\end{align*}
		for any $t \in (-1,1) \setminus \{0\}$ if $p \geq \frac{2d}{d-1}$, and for any $t \in (-1/2,1/2) \setminus \{0\}$ if $p < \frac{2d}{d+1}$.
	\end{theorem}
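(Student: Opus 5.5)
The plan is to reduce everything to an explicit asymptotic description of the kernel of $e^{\pm it|D|}|D|^{-\lambda}$ near the sphere $|x|=|t|$ on $\mathbb{R}^d$, and then to transfer this description to $\mathbb{T}^d$ by periodization. (In the range of $\lambda$, I read $n$ as $d$, so the condition is $\lambda>\max\{d-\frac dp,\frac{d-1}{2}\}$.)

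\textbf{Step 1: $f_\lambda\in W^{\sigma-1,p}(\mathbb{T}^d)$.} Since $\langle D\rangle^{\sigma-1}f_\lambda$ has Fourier coefficients $h(k)|k|^{-\lambda}$, it is the periodization of $F_\lambda=\mathcal{F}^{-1}(h|\xi|^{-\lambda})$. Away from the origin $F_\lambda$ is Schwartz-decaying, because $h|\xi|^{-\lambda}$ is smooth apart from the origin. Near $0$ it behaves like $c|x|^{\lambda-d}$ plus a smooth term. This singularity lies in $L^p$ exactly when $(d-\lambda)p<d$, that is, $\lambda>d-\frac dp$. Hence the periodization is a finite sum of $L^p$ pieces plus a smooth, rapidly convergent tail, so it lies in $L^p(\mathbb{T}^d)$.

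\textbf{Step 2: kernel near the light cone.} Next, $\langle D\rangle^{\sigma}S(t)(0,f_\lambda)$ has multiplier $\frac{\sin(t|k|)}{|k|}\langle k\rangle h(k)|k|^{-\lambda}$. Its Euclidean model is
\[
K_t=\mathcal{F}^{-1}\big(\tilde h(\xi)\,e^{\pm it|\xi|}|\xi|^{-\lambda}\big),
\]
up to an $L^1$-multiplier error, where $\langle\xi\rangle/|\xi|$ is symbol-like. I would write $\hat K_t$ in polar coordinates and use the stationary-phase asymptotics of $\widehat{d\sigma_{S^{d-1}}}(r x)\sim r^{-\frac{d-1}{2}}\sum_\pm c_\pm e^{\pm i r|x|}$. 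Integrating in $r$ against $r^{d-1-\lambda}e^{itr}$ then gives, for $|x|$ near $|t|$,
\[
K_t(x)= c\,\big(|x|-|t|\pm i0\big)^{\lambda-\frac{d+1}{2}}+\text{smoother terms},
\]
with $c\neq0$. This uses $\frac{d-1}{2}-\lambda\notin-\mathbb{N}$, so that the homogeneous distribution is a genuine power rather than a logarithm, and the Gamma-factor constant does not vanish. Away from the sphere and the origin $K_t$ is smooth and decays rapidly. At the origin its singularity is the same as that of $F_\lambda$, which is $L^p$.

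\textbf{Step 3: failure of $L^p$ integrability.} The power $(|x|-|t|)^{\lambda-\frac{d+1}{2}}$ fails to be $L^p$ near the sphere iff $(\frac{d+1}{2}-\lambda)p\ge1$, that is, $\lambda\le\frac{d+1}{2}-\frac1p$.
\begin{itemize}
\item When $p\ge\frac{2d}{d-1}$, we have $d-\frac dp\ge\frac{d+1}{2}-\frac1p$ exactly when $\frac{d-1}{2}\ge\frac{d-1}{p}$, i.e.\ $p\ge2$. Hmm, that shows the conditions are incompatible for large $p$, so for $p>2$ I would instead use the $L^{p'}$ duality and Peral's observation. Since $\sin(t|D|)$ is self-adjoint, I would test $S(t)f_\lambda$ against the dual counterexample: pair $\langle D\rangle^\sigma S(t)(0,f_\lambda)$ with a function of the form $K_t$ built at exponent $p'<\frac{2d}{d+1}$. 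Alternatively, and more likely what is intended, I would directly observe that for $p>2$ the kernel $K_t$ has a singularity $|x|^{\lambda-d}$ at the origin that, after propagation, concentrates on the sphere with amplitude $|x|^{-\frac{d-1}{2}}$ relative to its spherical mean. Computing more carefully, the sphere singularity of $e^{it|D|}F_\lambda$ is $(|x|-|t|)^{\lambda-\frac{d+1}{2}}$, whose non-integrability threshold is compared against $\lambda>d-\frac dp$. I would redo this comparison: $\lambda-\frac{d+1}{2}$ versus $-\frac1p$.
\item When $p<\frac{2d}{d+1}$, the hypothesis $\lambda\le\frac{d+1}{2}-\frac1p$ is exactly the divergence condition.
\end{itemize}

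\textbf{Step 4: periodization.} For $0<|t|<1$ (respectively $|t|<\frac12$), the sphere of radius $|t|$ centered at each lattice point yields a non-$L^p$ singularity. The restriction on $t$ ensures that the translated spheres do not cancel the singular term on a set of positive measure: they either do not overlap, or they meet only transversally on lower-dimensional sets. The remaining translates contribute smooth, summable terms near a fixed point of the sphere. Therefore the periodic function is not in $L^p(\mathbb{T}^d)$.

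I expect the main obstacle to be Step 3 in the regime $p\ge\frac{2d}{d-1}$: the precise exponent bookkeeping to identify the correct non-integrable singularity there, likely through duality with the $p'$ case.
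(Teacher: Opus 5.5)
\textbf{Case $p<\frac{2d}{d+1}$.} Here your plan is essentially the paper's. The divergence comes from the singularity $(|x|-|t|)_\pm^{\lambda-\frac{d+1}{2}}$ on the sphere $|x|=|t|$, which fails to be in $L^p$ exactly when $\lambda\le\frac{d+1}{2}-\frac1p$. The condition $|t|<\frac12$ keeps the spheres around distinct lattice points disjoint, so the other translates are smooth on a thin annulus around $|x|=|t|$. You still need the coefficient of this singularity in the $\sin(t|D|)$ combination to be nonzero. Both $e^{\pm it|D|}$ contribute there, so a single Gamma factor does not settle it.

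\textbf{Case $p\ge\frac{2d}{d-1}$: the gap.} Here you have no argument, and none exists. Your exponent check in Step 3 is correct, and it shows the claim is false in this regime.

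Duality with a $p'$ example does not help. It only shows that $\sin(t|D|)$ loses derivatives on \emph{some} $L^p$ function (Peral's focusing data, conormal to a sphere), not on this $f_\lambda$.

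For this $f_\lambda$, consider the Euclidean kernel $\mathcal{F}^{-1}\big[\frac{\langle\xi\rangle}{|\xi|}\sin(t|\xi|)h(\xi)|\xi|^{-\lambda}\big]$, whose periodization is $\langle D\rangle^\sigma S(t)(0,f_\lambda)$.
\begin{itemize}
\item Near the origin it is smooth. On $|x|<|t|/2$ one has $|\nabla_\xi(x\cdot\xi\pm t|\xi|)|\ge|t|-|x|>|t|/2$, so the same dyadic non-stationary-phase argument the paper uses on $|x|>A|t|$ applies. In particular, your Step 2 claim that the origin singularity of $F_\lambda$ persists is inaccurate.
\item Near $|x|=|t|$, the $j$-th dyadic piece is $O\big(2^{j(\frac{d+1}{2}-\lambda)}(1+2^j\bigl||x|-|t|\bigr|)^{-N}\big)$. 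This sums uniformly, because $\lambda>d-\frac dp\ge\frac{d+1}{2}$.
\end{itemize}
Hence the kernel is bounded and rapidly decaying, its periodization lies in $L^\infty(\mathbb{T}^d)$, and $S(t)(0,f_\lambda)\in W^{\sigma,p}(\mathbb{T}^d)$.

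In $d=3$ you can check this by hand. By Kirchhoff's formula, $\frac{\sin(t|D|)}{|D|}\mathcal{F}^{-1}[h|\xi|^{1-\lambda}]$ equals, modulo a bounded smooth function, a constant times $\big((r+|t|)^{\lambda-2}-\bigl|r-|t|\bigr|^{\lambda-2}\big)/r$ with $r=|x|$. This is bounded for $\lambda\in(2,3)$.

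\textbf{Where the paper's proof breaks.} The paper attributes the divergence in this case to the prefactor $|x|^{-\frac{d-1}{2}}$ near $x=0$ in Theorem \ref{thm:flambda-tnotzero}.
\begin{itemize}
\item That prefactor comes from inserting the large-argument asymptotics of $J_{\frac{d-2}{2}}(\rho|x|)$ for every $\rho\ge1$, which is invalid when $\rho|x|\lesssim1$.
\item The remainder $c_\lambda$ is only bounded by $|x|^{-\frac{d-1}{2}}$ near the origin, the same size as the purported main term. So the lower bound \eqref{eq:boundbelow-1} yields nothing, and in fact the terms cancel.
\item In $d=3$, $J_{1/2}(s)=\sqrt{2/(\pi s)}\,\sin s$ exactly. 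Hence $c_\lambda\equiv0$, $c_-=-c_+$, and the bracket multiplying $|x|^{-1}$ is $O(|x|)$ even within the paper's own formula.
\end{itemize}
So the first bullet of the statement cannot be proved by any route. The same goes for Theorem \ref{thm:nonpropag-nonlinear}, which is exactly the case $d=3$, $p\ge3=\frac{2d}{d-1}$. Only the $p<\frac{2d}{d+1}$ part holds. A counterexample for $p>2$ has to use focusing data rather than $f_\lambda$.
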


In Section \ref{sec:prob}, we provide the positive result after a suitable randomization of the initial data, using the work \cite{GOTW22}. In Section \ref{sec:linearcex}, following \cite{Peral1980}, we prove that the elements $F_{\lambda}$ for $x$ in $\mathbb{R}^d$ with $d > 1$ serve as counterexample for the propagation of regularity under the linear flow in $W^{\sigma,p}(\mathbb{R}^d)$ spaces. In Section \ref{sec:localization} we give a periodization argument to see the validity of this non-propagation for $f_{\lambda}$. In Section \ref{sec:proof-nonlinearcex} we extend this phenomenon to the nonlinear case with $d=3$ and $p \geq 3$ with the proof of Theorem \ref{thm:nonpropag-nonlinear}. Furthemore, in appendices \ref{app:bessel} and \ref{app:temp-dist-xalpha+} we provide some auxiliary results concerning Bessel functions and homogeneous distributions, which will be used throughout this work.

\begin{remark}
The bounds $1$ and $1/2$ for $|t|$ in Theorems \ref{thm:nonpropag-nonlinear} and \ref{thm:nonpropag-linear} are technical constraints due to the way we apply the finite speed of propagation from Proposition \ref{prop:finite-speed-prop}, and the method we use to show the divergence of
\begin{align*}
\| S(t)(0,f_{\lambda})\|_{W^{\sigma,p}(\mathbb{T}^d)},
\end{align*}
for $d \geq 2$ and suitable $\lambda$, $\sigma$ and $p$, after the periodization \eqref{eq:flambdadef-t}. Although they may be extended to any $t$, we admit this limitation since it does not prevent us from illustrating the aforementioned pathological behaviour: under $S(t)$ and, eventually, under the flow of \eqref{eq:cauchywave}, the solution ceases instantaneously to be in $W^{\sigma,p}(\mathbb{T}^3)$.
\end{remark}

\begin{remark}
This work shares the same philosophy than others in which the conceptual contrast between topological and measure-theoretic properties around the flow of certain PDEs is studied. We refer to \cite{ST20}, where they present the existence of a dense set of the Sobolev space of super-critical regularity as data which, convoluted with some approximate identity, generate a family of smooth solutions of \eqref{eq:cauchywave} that does not converge in the space of super-critical Sobolev regularity due to a norm-inflation mechanism; or in \cite{CG23}, where an analogous work is given for super-critical nonlinear Schrödinger equations.
\end{remark}

                                                                                                                                                                                                                                                                                                                                                                                                                                                                                                                                                                                                                                                                                                                                                                                                                                                                                                                                                                                                                                                                                                                                                                                                                                                                                                                                                                                                                                                                                   \section{Propagation of almost sure regularity under a Gaussian measure along the flow} \label{sec:prob}
We know the there is global well-posedness of the Cauchy problem \eqref{eq:cauchywave} if the initial datum belongs to $\vec{H}^r(\mathbb{T}^3)$, for $r \geq 1$ (see \cite{Tzvetkov2019-notes}), i.e. it admits a global flow $\Phi_{\text{NLW}}$ on these spaces. In order to be more precise with the notation of the flow, given $t \in \mathbb{R}$ we write
\begin{align*}
\Phi_{\text{NLW}}(t) :& \quad \vec{H}^r(\mathbb{T}^3) \rightarrow \vec{H}^r(\mathbb{T}^3) \\
&\quad (u_0 , v_0) \mapsto (u(t),\partial u(t)),
\end{align*}
where $(u,\partial_t u)$ is the global solution of the Cauchy problem \eqref{eq:cauchywave} with $\left. (u,\partial_t u)\right|_{t=0} = (u_0,v_0)$, $u_0 \in H^r(\mathbb{T}^3)$ and $u_1 \in H^{r-1}(\mathbb{T}^3)$, $r \geq 1$.

Given $s \in \mathbb{R}$, let $\vec{\mu}_s$ be the centered Gaussian measure with Cameron-Martin space $\vec{H}^s(\mathbb{T}^3)$. Denote $\vec{u} = (u,v)$. Such Gaussian measure has a formal density of the form

\begin{align*}
&d \vec{\mu}_s = Z_s^{-1} e^{-\frac{1}{2} \| \vec{u} \|^2_{\vec{H}^s}} d\vec{u} \nonumber \\
&= \prod_{k \in \mathbb{Z}^3} Z_{s,k}^{-1} e^{-\frac{1}{2} \langle k \rangle^{2s} |\widehat{u}(k)|^2} e^{-\frac{1}{2} \langle k \rangle^{2(s-1)} |\widehat{v}(k)|^2} d \widehat{u}(k) d\widehat{v}(k).
\end{align*}
Samples $\vec{u}^{\omega} = (u^{\omega},v^{\omega})$ from $\vec{\mu}_s$ can be constructed via the Karhunen-Loève expansions
\begin{align}\label{eq:randomid}
u^{\omega}(x) = \sum_{k \in \mathbb{Z}^3} \frac{g_k^{\omega}}{\langle k \rangle^s} e^{ikx} \text{ and }  v^{\omega}(x) = \sum_{k \in \mathbb{Z}^3} \frac{h_k^{\omega}}{\langle k \rangle^{s-1}} e^{ikx},
\end{align}
for $\{g_k^{\omega}\}_{k \in \mathbb{Z}^3}$ and $\{h_k^{\omega}\}_{k \in \mathbb{Z}^3}$ sequences of standard and independent gaussian random variables on $(\Omega,\mathbb{P})$ such that $g_k^{\omega} = \overline{g_{-k}^{\omega}}$ and $h_k^{\omega} = \overline{h_{-k}^{\omega}}$, in order to preserve values in $\mathbb{R}$. One can prove that these series converge in $L^2(\Omega;\vec{H}^r(\mathbb{T}^3))$ as long as
\begin{align*}
r < s - \frac{3}{2}
\end{align*}
and thus the map
\begin{align*}
\omega \in \Omega \mapsto (u^{\omega},v^{\omega})
\end{align*}
induces the Gaussian measure $\vec{\mu}_s$ as a probability measure with topological support $\vec{H}^r(\mathbb{T}^3)$ for the same range of $r$. Namely, both $u^{\omega}$ and $v^{\omega}$ converge $w$-a.s. in $H^r(\mathbb{T}^3)$ and $H^{r-1}(\mathbb{T}^3)$. We present now a result on quasi-invariance of this measure under the flow $\Phi_{\text{NLW}}(t)$ from \cite{GOTW22}.

\begin{theorem}\label{thm-qinv}
Let $s \geq 3$ be an odd integer. Then, $\vec{\mu}_s$ is quasi-invariant under the dynamics of the defocusing cubic $NLW$ from \eqref{eq:cauchywave}. Namely, for any $t \in \mathbb{R}$, the Gaussian measure $\vec{\mu}_s$ and its pushforward under $\Phi_{\text{\text{NLW}}}(t)$ are mutually absolutely continuous.
\end{theorem}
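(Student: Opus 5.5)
The plan follows Tzvetkov's modified-energy approach to quasi-invariance, as implemented in \cite{GOTW22}. First, replace $\Phi_{\text{NLW}}(t)$ by the frequency-truncated flow $\Phi_N(t)$ for $\partial_{tt}u-\Delta u+\pi_N((\pi_N u)^3)=0$, where $\pi_N$ is the Dirichlet projection onto $|k|\le N$. Then prove quasi-invariance of $\vec{\mu}_s$ under $\Phi_N(t)$ with constants uniform in $N$. Finally, pass to the limit using the approximation property $\Phi_N(t)\vec u\to\Phi_{\text{NLW}}(t)\vec u$ in $\vec H^r(\mathbb{T}^3)$, uniformly on compact sets of data and times, which holds for $r<s-\frac32$, $r\ge1$. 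Since $\vec{\mu}_s$ is invariant under the linear flow, it suffices to show that $\vec{\mu}_s(A)=0$ implies $\vec{\mu}_s(\Phi_N(t)A)=0$ uniformly in $N$ for $|t|$ small. Global control, and the reverse implication, then follow from time reversibility and the group property.

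The key step is a modified energy. Write $s=2m+1$, so that $s-1=2m$ is even and $D^{s-1}=(-\Delta)^m$ is a local operator. Differentiating $\frac12\|u\|_{\dot H^s}^2+\frac12\|v\|_{\dot H^{s-1}}^2$ along the flow gives $-\int D^{s-1}v\,D^{s-1}(u^3)\,dx$. Expanding $D^{s-1}(u^3)$ by Leibniz, the only term not controllable on the support of $\vec\mu_s$ is $3\int D^{s-1}v\,u^2 D^{s-1}u\,dx$, where all derivatives fall on one factor. Using $\partial_t u=v$, I rewrite it as
\begin{align*}
\frac{3}{2}\frac{d}{dt}\int u^2 (D^{s-1}u)^2\,dx-3\int u v (D^{s-1}u)^2\,dx.
\end{align*}
So I define
\begin{align*}
E_N(\vec u)=\frac12\|\vec u\|_{\vec H^s}^2+\frac32\int (\pi_N u)^2(D^{s-1}\pi_N u)^2\,dx .
\end{align*}
Then $\frac{d}{dt}E_N(\Phi_N(t)\vec u)=Q_N(\vec u)$, where every term of $Q_N$ has at most $s-1$ derivatives on $u$ and $s-2$ on $v$ in the high-frequency factor, plus lower-order factors.

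Next, introduce the weighted measures $d\rho_{N}=\mathbf 1_{\{\|\vec u\|_{\vec H^{r}}\le R\}}\,e^{-\frac32\int (\pi_N u)^2(D^{s-1}\pi_Nu)^2}\,d\vec\mu_s$. These are mutually absolutely continuous with $\vec\mu_s$ on the ball, and by the defocusing energy the ball is preserved up to enlarging $R$. I would then show that $\rho_N$ is nearly invariant in the Yudovich-type form
\begin{align*}
\frac{d}{dt}\rho_N(\Phi_N(t)A)\le C_R\, p\,\rho_N(\Phi_N(t)A)^{1-\frac1p}\quad\text{for all }p\ge2.
\end{align*}
This follows from Liouville's theorem for the truncated Hamiltonian flow, together with the bound $\|\mathbf 1_{B_R}Q_N\|_{L^p(\vec\mu_s)}\le C_R\,p$ uniformly in $N$. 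Integrating the differential inequality and optimizing in $p$ gives $\rho_N(\Phi_N(t)A)\le \rho_N(A)^{1-\epsilon}$ for $|t|\le t_0(R,\epsilon)$. Letting $N\to\infty$ via the approximation property and the weak convergence $\rho_N\to\rho$ yields the claim.

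The main obstacle is the probabilistic energy estimate $\|Q_N\|_{L^p(\vec\mu_s)}\lesssim p$. I would first decompose $Q_N$ into multilinear expressions in the Gaussian coefficients $g_k,h_k$. Deterministic Sobolev and Hölder bounds, using the ball constraint in $\vec H^r$, handle the terms where at least two factors carry substantial regularity. The worst remaining term is of the type $\int u v (D^{s-1}u)^2$, in which two high-frequency factors of regularity below $s-\frac32$ appear. For it I would use the Wiener chaos estimate (hypercontractivity) on its quartic Gaussian part and compute the $L^2(\vec\mu_s)$ norm via Wick's theorem. Time-reversal symmetry of $v$ should make the resonant pairings cancel, leaving a summable frequency sum. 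If a logarithmic divergence persisted, I would perform one further normal-form (time integration by parts) correction to the energy before applying the chaos bound.
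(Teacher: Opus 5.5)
The paper does not prove this theorem; it quotes it from \cite{GOTW22}, so the comparison here is with the argument there. Your overall architecture is the right kind: truncated flow $\Phi_N$, a modified energy built from the top-order Leibniz term, weighted Gaussian measures, and Liouville plus a Yudovich-type inequality. As written, however, the proof fails at its core, because neither your energy nor your weight is renormalized, and in three dimensions this is fatal.

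On the support of $\vec\mu_s$ one only has $D^{s-1}u\in H^{-\frac12-}$, and
\begin{align*}
\kappa_N:=\mathbb{E}_{\vec\mu_s}\bigl[(D^{s-1}\pi_N u)^2(x)\bigr]=\sum_{0<|k|\le N}|k|^{2(s-1)}\langle k\rangle^{-2s}\sim N .
\end{align*}
Hence $\int(\pi_N u)^2(D^{s-1}\pi_N u)^2\,dx=\kappa_N\int(\pi_N u)^2\,dx+(\text{a term bounded in probability})$. This tends to $+\infty$ in probability, so your densities $e^{-\frac32\int(\pi_N u)^2(D^{s-1}\pi_N u)^2}$ tend to $0$ and $\rho_N$ converges to the zero measure. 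The mutual absolute continuity of $\rho_N$ and $\vec\mu_s$ on balls holds for each fixed $N$ but degenerates as $N\to\infty$. Your final limiting step therefore gives no information on $\vec\mu_s$-null sets.

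The same divergence breaks the energy estimate. $Q_N$ contains $3\int\pi_N u\,\pi_N v\,(D^{s-1}\pi_N u)^2\,dx$, whose piece $3\kappa_N\int\pi_N u\,\pi_N v\,dx$ has $L^2(\vec\mu_s)$-norm of order $N$, also after restriction to an $\vec H^r$-ball. So $\|\mathbf 1_{B_R}Q_N\|_{L^p(\vec\mu_s)}$ is not bounded uniformly in $N$. The symmetry $v\mapsto -v$ only kills the mean of this term, not its variance, and the divergence is a power of $N$, not a logarithm.

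The missing ingredient is the renormalization used in \cite{GOTW22}: one Wick-orders $(D^{s-1}\pi_N u)^2$, i.e.\ adds the counterterm $-\frac32\kappa_N\int(\pi_N u)^2\,dx$ to the modified energy. This is exactly what your proposed time integration by parts of $3\kappa_N\int\pi_N u\,\pi_N v\,dx$ would produce. But the weight must then change consistently, because the transport identity for $\rho_N$ requires the density to be $e^{-R_N}$, with $R_N$ the correction actually appearing in $E_N$. The new density $\exp\bigl(-\frac32\int(\pi_N u)^2[(D^{s-1}\pi_N u)^2-\kappa_N]\,dx\bigr)$ is no longer bounded by $1$; on the ball it is a priori only bounded by $e^{\frac32\kappa_N R^2}$. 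Proving its $L^p(\vec\mu_s)$-boundedness on $\vec H^r$-balls, uniformly in $N$, is a separate and genuinely nontrivial step that your plan does not address.

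Finally, even after renormalization, hypercontractivity applied to a quartic Gaussian expression gives growth $p^{2}$, not the $C_R\,p$ you claim. Growth faster than linear does not close the Yudovich argument: from $F(0)=0$ you would only get $F(t)\le e^{-c/t}$, not $F(t)=0$. So the probabilistic energy estimate must use the ball constraint to reduce the random part to at most second-order chaos.
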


This ensures the propagation of almost sure properties with respect to the Gaussian measure along $\Phi_{\text{NLW}}$. In fact, in infinite dimensions this becomes important because many relevant properties regarding the small-scale behaviour under a Gaussian measure hold true with probability $1$ or $0$ (see \cite{Bogachev1998}). This is a consequence of Fernique's theorem (see for instance Theorem 2.7 in \cite{PZ14}): under a Gaussian measure, any given norm is finite with probability $1$ or $0$. 

Denote $\vec{W}^{\sigma,p}(\mathbb{T}^3) = W^{\sigma,p}(\mathbb{T}^3) \times W^{\sigma-1,p}(\mathbb{T}^3)$ for $\sigma \geq 1$ and $1 \leq p \leq \infty$. We prove that $\vec{\mu}_s(\vec{W}^{\sigma,p}(\mathbb{T}^3)) = 1$ if $\sigma < s - \frac{3}{2}$ at the end of the Section (see Proposition \ref{prop:asprop}), preceded by a useful result whose proof can be found in \cite{BurqTzvet}. By the quasi-invariance given in Theorem \ref{thm-qinv}
\begin{align*}
\vec{\mu}_s(\Phi_{\text{NLW}}(t)(\vec{W}^{\sigma,p}(\mathbb{T}^3))) = 1
\end{align*}
for any $t \in \mathbb{R}$, $\sigma < s - \frac{3}{2}$ and $1 \leq p \leq \infty$. 

Consider $\sigma > 1$ and $p\geq 3$, and let $s > \max\{3,\sigma + \frac{3}{2}\}$. Thus, the regularity in terms of these $\vec{W}^{\sigma,p}(\mathbb{T}^3)$ spaces is $\vec{\mu}_s$-almost surely propagated, in contrast with Theorem \ref{thm:nonpropag-nonlinear}, where we proved that, for any $\lambda \in (3 - \frac{3}{p},3)$, the pathological set $S_{\lambda} \subset W^{\sigma-1,p}(\mathbb{T}^3)$ of initial velocities in \eqref{eq:cauchywave} is dense in $W^{\sigma-1,p}(\mathbb{T}^3)$ and gives 
\begin{align*}
\Phi_{\text{NLW}}(t)(\{0\} \times S_{\lambda}) \cap \vec{W}^{\sigma,p}(\mathbb{T}^3) = \emptyset
\end{align*}
for any $t \in (-1,1) \setminus \{0\}$.
\begin{lemma}\label{lemma:1deg-wchaos}
		Let $d \geq 1$, $(a_k) \in \l^2(\mathbb{Z}^d)$ and $(g^{\omega}_k)$ a sequence of complex, pairwise independent, standard Gaussian random variables . Then, the random variable
		\begin{align*}
			w \mapsto \sum_{k \in \mathbb{Z}^d} a_k g^{\omega}_k
		\end{align*}
		is sub-Gaussian (up to normalization to get unit variance). In particular, there exists $C>0$ such that, for every $r \geq 1$,
		\begin{align}
			\| \sum_{k \in \mathbb{Z}^d} a_k g^{\omega}_k\|_{L^r_w} \leq C \sqrt{r} \|(a_k)\|_{\l^2(\mathbb{Z}^d)}.
		\end{align}
		\label{lemma:randomsum-key}
	\end{lemma}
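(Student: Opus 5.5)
The plan is to prove sub-Gaussianity directly from the moment generating function, and then read off the $L^r$ bound from the Gaussian tail. Write $X^\omega = \sum_k a_k g_k^\omega$.

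First I would handle a finite truncation $X_N = \sum_{|k|\le N} a_k g_k^\omega$. I will work with real and imaginary parts separately: $X = \operatorname{Re}X + i\operatorname{Im}X$. Each part is a real linear combination of the real and imaginary parts of the $g_k$, which are centered standard (real) Gaussians.

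I note that pairwise independence alone is not enough for joint Gaussianity. So I will use the standard reading of the hypothesis, that the $g_k$ are independent; under the symmetry constraint $g_k=\overline{g_{-k}}$ this means independent up to that pairing. Pairing $k$ with $-k$ only regroups coefficients and costs at most a factor $2$ in the $\ell^2$ norm.

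For a real centered Gaussian combination $Y=\sum_j b_j\xi_j$ with independent standard $\xi_j$, independence gives
\begin{align*}
\mathbb{E}\, e^{\theta Y} = \prod_j e^{\theta^2 b_j^2/2} = e^{\theta^2 \|b\|_{\ell^2}^2/2}.
\end{align*}
This is exactly sub-Gaussianity with variance proxy $\|b\|_{\ell^2}^2 \lesssim \|(a_k)\|_{\ell^2}^2$.

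Next I pass to the limit $N\to\infty$. Orthogonality gives $\|X_N - X_M\|_{L^2_\omega}^2 \lesssim \sum_{M<|k|\le N}|a_k|^2$, so $X_N$ converges in $L^2_\omega$, and almost surely along a subsequence, to $X$. Fatou's lemma then transfers the uniform bound $\mathbb{E}\,e^{\theta \operatorname{Re}X_N}\le e^{C\theta^2\|a\|^2_{\ell^2}}$ to the limit, and likewise for the imaginary part.

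Then Chebyshev's inequality with $\theta$ optimized yields the tail bound
\begin{align*}
\mathbb{P}(|X|>\lambda)\le 4e^{-c\lambda^2/\|a\|_{\ell^2}^2}.
\end{align*}
For the moments I use the layer-cake formula:
\begin{align*}
\mathbb{E}|X|^r = \int_0^\infty r\lambda^{r-1}\,\mathbb{P}(|X|>\lambda)\,d\lambda \le C^r\|a\|_{\ell^2}^r\, r\,\Gamma(r/2).
\end{align*}
Taking $r$-th roots and applying Stirling's formula gives $(r\Gamma(r/2))^{1/r}\lesssim\sqrt r$ uniformly in $r\ge1$, which is the claimed estimate.

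There is also a shortcut: in the independent case, $\operatorname{Re}X$ and $\operatorname{Im}X$ are exactly centered Gaussians with variance at most $C\|a\|^2_{\ell^2}$. The bound then follows directly from $\|N(0,1)\|_{L^r}\sim\sqrt r$, again via Stirling.

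The only delicate point is the independence issue noted above, namely pairwise independence versus joint Gaussianity. Everything else is routine.
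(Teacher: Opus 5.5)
Your argument is correct and is essentially the standard proof in \cite{BurqTzvet} to which the paper defers: factor the exponential moment using independence to get $\mathbb{E}\,e^{\theta Y}\le e^{C\theta^2\|a\|_{\ell^2}^2}$, optimize Chebyshev's inequality to obtain a Gaussian tail, and integrate that tail to get the $\sqrt{r}$ growth of the $L^r_\omega$ norms; your truncation-plus-Fatou step and the real/imaginary splitting are routine additions. Reading the hypothesis as mutual independence (up to the pairing $g_{-k}=\overline{g_k}$) is in fact necessary, not just convenient: with i.i.d.\ signs $\eta_1,\dots,\eta_m$, i.i.d.\ $G_S\sim N(0,1)$ independent of them, and $g_S=|G_S|\prod_{i\in S}\eta_i$ for nonempty $S\subset\{1,\dots,m\}$, one gets $n=2^m-1$ pairwise independent standard Gaussians with $\mathbb{E}\bigl|\sum_S g_S\bigr|^4\gtrsim n^3$, which contradicts the claimed bound $\lesssim n^2$.
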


\begin{proposition}\label{prop:asprop}
Let $s \in \mathbb{R}$, $1 \leq p \leq \infty$ and $\sigma < s - \frac{3}{2}$. Then $(u^{\omega},v^{\omega}) \in \vec{W}^{\sigma,p}(\mathbb{T}^3)$, $\omega$-almost surely. In other words, $\vec{\mu}(\vec{W}^{\sigma,p}(\mathbb{T}^3)) = 1$.
\end{proposition}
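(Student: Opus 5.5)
We will show that $\mathbb{E}\|u^{\omega}\|_{W^{\sigma,p}}^r<\infty$ and $\mathbb{E}\|v^{\omega}\|_{W^{\sigma-1,p}}^r<\infty$ for suitable finite $r$. Finiteness of the expectation forces $\|u^\omega\|_{W^{\sigma,p}}<\infty$ almost surely, and likewise for $v^\omega$. We treat $u^\omega$; the argument for $v^\omega$ is identical, since $\langle D\rangle^{\sigma-1}v^\omega=\sum_k h_k^\omega\langle k\rangle^{\sigma-s}e^{ikx}$ has exactly the same coefficient structure as $\langle D\rangle^{\sigma}u^\omega=\sum_k g_k^\omega\langle k\rangle^{\sigma-s}e^{ikx}$.

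\textbf{Case $p<\infty$.} Fix $r\ge p$. By Minkowski's integral inequality,
\begin{align*}
\big\|\|\langle D\rangle^{\sigma}u^\omega\|_{L^p_x}\big\|_{L^r_\omega}\le \big\|\|\langle D\rangle^{\sigma}u^\omega(x)\|_{L^r_\omega}\big\|_{L^p_x}.
\end{align*}
For each fixed $x$, the quantity $\langle D\rangle^{\sigma}u^\omega(x)$ is a Gaussian sum $\sum_k a_k(x)g_k^\omega$ with $a_k(x)=\langle k\rangle^{\sigma-s}e^{ikx}$. Lemma \ref{lemma:1deg-wchaos} therefore gives the pointwise bound
\begin{align*}
\|\langle D\rangle^{\sigma}u^\omega(x)\|_{L^r_\omega}\le C\sqrt r\Big(\sum_{k\in\mathbb{Z}^3}\langle k\rangle^{2(\sigma-s)}\Big)^{1/2},
\end{align*}
which is uniform in $x$. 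The sum converges exactly when $2(s-\sigma)>3$, i.e.\ $\sigma<s-\frac32$. Since $\mathbb{T}^3$ has finite measure, the $L^p_x$ norm of this constant is finite. To make this rigorous, apply the estimate to the truncations $\sum_{|k|\le N}$ and pass to the limit by Fatou's lemma.

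\textbf{Case $p=\infty$.} This is the main obstacle, because the Minkowski step is unavailable. We pick $\varepsilon>0$ with $\sigma+\varepsilon<s-\frac32$, and a large finite $q$ with $\varepsilon q>3$. Sobolev embedding on $\mathbb{T}^3$ gives $W^{\sigma+\varepsilon,q}\hookrightarrow W^{\sigma,\infty}$. The finite-$p$ case applied with $(\sigma+\varepsilon,q)$ then yields $u^\omega\in W^{\sigma+\varepsilon,q}\subset W^{\sigma,\infty}$ almost surely.

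Two points need attention. First, for $p=\infty$ the definition $\|(1-\Delta)^{\sigma/2}f\|_{L^\infty}$ has to be compared with the Sobolev embedding. The embedding holds here because $\langle D\rangle^{-\varepsilon}$ has an $L^{q'}$ kernel when $\varepsilon q>3$. Second, Lemma \ref{lemma:1deg-wchaos} must apply to the constrained family $g_k=\overline{g_{-k}}$. This is handled by splitting into real and imaginary parts over a half-lattice, which changes only the constant.

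Intersecting the full-measure events for $u^\omega$ and $v^\omega$ gives $(u^{\omega},v^{\omega})\in\vec{W}^{\sigma,p}(\mathbb{T}^3)$ almost surely, i.e.\ $\vec{\mu}_s(\vec{W}^{\sigma,p})=1$.
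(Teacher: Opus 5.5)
Your proof is correct, and for $p=\infty$ it takes a different and shorter route than the paper.

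\textbf{Case $p<\infty$.} Here you follow the paper: Minkowski's inequality with $r\ge p$, then the pointwise Gaussian moment bound of Lemma~\ref{lemma:1deg-wchaos}. The only difference is the ending. You conclude directly from a finite moment. The paper instead converts its moment bounds (treating $Q<p$ via H\"older) into the tail estimate $\vec{\mu}_s(\|(u^\omega,v^\omega)\|_{\vec{W}^{\sigma,p}}>\lambda)\le Ce^{-c\lambda^2/p}$ and then invokes Borel--Cantelli. That step is not needed for the statement. Since your bounds grow like $\sqrt r$ for every $r\ge p$, they give the same sub-Gaussian tails if one wants them.

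\textbf{Case $p=\infty$.} The paper splits $u^\omega$ into Littlewood--Paley blocks $Q_ju^\omega$. On each block it applies Bernstein to pass from $W^{\sigma,q}$ to $W^{\sigma,\infty}$ at the cost of a power of $2^j$, and then sums tail bounds with weights $\sigma_j$. You absorb the same loss in one step through the embedding $W^{\sigma+\varepsilon,q}(\mathbb{T}^3)\hookrightarrow W^{\sigma,\infty}(\mathbb{T}^3)$ for $\varepsilon q>3$, which holds because the periodized Bessel kernel of $\langle D\rangle^{-\varepsilon}$ lies in $L^{q'}$. You then rerun the finite-$p$ case with $(\sigma+\varepsilon,q)$. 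This avoids the dyadic bookkeeping entirely.

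Your count of $3/q$ lost derivatives is also the correct one in $d=3$. Bernstein costs $2^{3j/q}$ per block, not the $2^{j/q}$ written in the paper. This slip is harmless there, since $q$ may be taken arbitrarily large.

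Your remark on the reality constraint $g_k=\overline{g_{-k}}$, which the paper passes over, is also appropriate.
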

\begin{proof}
Let $p \in [1,\infty)$. If $p \leq Q < \infty$, by Minkowski inequality and Lemma \ref{lemma:1deg-wchaos}
\begin{align*}
\| \| u^{\omega} \|_{W^{\sigma,p}(\mathbb{T}^3)} \|_{L^Q_w} \lesssim \| \| \langle D \rangle^{\sigma} u^{\omega} \|_{L^Q_{\omega}} \|_{L^p(\mathbb{T}^3)} \lesssim \sqrt{Q} \| (\langle k \rangle^{\sigma - s})_{k \in \mathbb{Z}^3} \|_{\l^2(\mathbb{Z}^3)},
\end{align*}
which converges if $\sigma < s - \frac{3}{2}$. If $Q < p$, by Hölder's inequality
\begin{align*}
\| \| u^{\omega} \|_{W^{\sigma,p}(\mathbb{T}^3)} \|_{L^Q_{\omega}} \lesssim \| \| \langle D \rangle^{\sigma} u^{\omega} \|_{L^p_{\omega}} \|_{L^p(\mathbb{T}^3)} \lesssim \sqrt{p} \| (\langle k \rangle^{\sigma - s})_{k} \|_{\l^2(\mathbb{Z}^3)},
\end{align*}
which again converges if $\sigma < s - \frac{3}{2}$. Thus, for any $Q \in [1,\infty)$ and $\sigma < s - \frac{3}{2}$
\begin{align*}
\| \| u^{\omega} \|_{W^{\sigma,p}(\mathbb{T}^3)} \|_{L^Q_{\omega}} \lesssim \sqrt{Q} \left(\max_{Q \in [1,p]} \sqrt{p/Q}\right) = \sqrt{Qp}.
\end{align*}
By Markov inequality and proceeding in an identical way with $v^{\omega}$, this implies that there exists some suitable constants $C,c>0$ independent of $\lambda$ such that
\begin{align*}
\vec{\mu}_s(\|(u^{\omega},v^{\omega})\|_{\vec{W}^{\sigma,p}} > \lambda) \leq C e^{-c\lambda^2/p} \text{ for any } \lambda > 0.
\end{align*}
In other words, given $\varepsilon \in (0,1)$
\begin{align*}
\| (u^{\omega},v^{\omega}) \|_{\vec{W}^{\sigma,p}(\mathbb{T}^3)} \lesssim (-p\log(\varepsilon))^{1/2}
\end{align*}
with probability $1 - \varepsilon$. In order to pass to probability $1$, denote
\begin{align*}
A_N = \{\omega \in \Omega : \| (u^{\omega},v^{\omega}) \|_{\vec{W}^{\sigma,p}(\mathbb{T}^3)}> N\}, \quad N \in \mathbb{N}.
\end{align*}
We obtain that
\begin{align*}
\sum_{N \in \mathbb{N}} \mathbb{P}(A_N) \lesssim \sum_{N \in \mathbb{N}} e^{-cN^2/p} < \infty.
\end{align*}
By Borel-Cantelli Lemma, this implies that 
\begin{align*}
\mathbb{P}(\limsup_{N \rightarrow \infty} A_N) = 0 \Longrightarrow \mathbb{P}(\liminf_{N \rightarrow \infty} A_N^c) = 1,
\end{align*}
i.e. for almost every $\omega \in \Omega$ there exists some $N_{\omega} \in \mathbb{N}$ such that
\begin{align*}
\| (u^{\omega},v^{\omega}) \|_{\vec{W}^{\sigma,p}(\mathbb{T}^3)} \leq N_{\omega}.
\end{align*}
This concludes the proof for the case $p < \infty$. 

Regarding $p=\infty$, with the operator $D = \frac{1}{i}\nabla$ adapted to functions on $\mathbb{T}^3$, given $j \geq 1$ consider the projections $Q_j = \varphi_j(D)$, $Q_0 = \varphi_0(D)$, with $\varphi_0, \varphi_j \in C^{\infty}(\mathbb{R}^3)$ and $\supp(\varphi_0) \subset \overline{B}(0,1)$, $\supp(\varphi_j) \subset \overline{B}(0,2^{j+1})\setminus \overline{B}(0,2^{j-1})$. Since $\sigma < s - \frac{3}{2}$, we know that there exists some $q \in [1,\infty)$ such that $\sigma < s - \frac{3}{2} - \frac{1}{q}$. Consider $Q \geq 1$, so that by Bernstein inequality and a similar procedure to the case $p < \infty$,
\begin{align*}
&\| \| Q_j u^{\omega} \|_{W^{\sigma,\infty}} \|_{L^Q_{\omega}} \lesssim 2^{\frac{j}{q}} \| \| Q_j u^{\omega} \|_{W^{\sigma,q}} \|_{L^Q_{\omega}} \lesssim \sqrt{Qq} 2^{j\left( \frac{1}{q} + \sigma - s + \frac{3}{2} \right)}, \nonumber \\
&\| \| Q_0 u^{\omega} \|_{W^{\sigma,\infty}} \|_{L^Q_{\omega}} \lesssim \sqrt{Qq}.
\end{align*}
Let $(\sigma_j)_{j \in \mathbb{N} \cup \{0\}} \subset (0,\infty)$ be a sequence in the unit ball of $\l^1(\mathbb{N})$ such that\footnote{It would suffice to take, for each $j \in \mathbb{N} \cup \{0\}$,
\begin{align*}
\sigma_j := c_{\sigma}^{-1} 2^{-\frac{j}{2}(s -  \sigma - \frac{3}{2} - \frac{1}{q})}, \quad c_{\sigma} = \sum_{j \in \mathbb{N} \cup \{0\}} 2^{-\frac{j}{2}(s - \sigma - \frac{3}{2} - \frac{1}{q})}.
\end{align*}}
\begin{align*}
\sigma_j \geq 2^{-j(s -  \sigma - \frac{3}{2} - \frac{1}{q})}, \quad j \geq j_0,
\end{align*}
for some $j_0 \in \mathbb{N} \cup \{0\}$. This choice of $\sigma_j$ let us take $C,c > 0$ independent of $j$ such that (doing also the analogous computation for $v^{\omega}$)
\begin{align*}
&\vec{\mu}_s(\| (Q_j u^{\omega},Q_jv^{\omega}) \|_{\vec{W}^{\sigma,\infty}(\mathbb{T}^3)} > \sigma_j \lambda) \leq C e^{-\frac{c\lambda^2 \sigma_j^2}{q} 2^{j(2s - 2 \sigma - 3 - \frac{2}{q})}}, \nonumber \\
&\vec{\mu}_s(\| (Q_0 u^{\omega},Q_0 v^{\omega}) \|_{\vec{W}^{\sigma,\infty}(\mathbb{T}^3)} > \sigma_0 \lambda) \leq C e^{-\frac{c\lambda^2\sigma_0^2}{q}}
\end{align*}
Summing all the pieces,
\begin{align*}
&\vec{\mu}_s(\| (u^{\omega},v^{\omega}) \|_{\vec{W}^{\sigma,\infty}} > \lambda) \nonumber \\ 
&\leq \vec{\mu}_s(\| (Q_0 u^{\omega},Q_0v^{\omega}) \|_{\vec{W}^{\sigma,\infty}} > \sigma_0 \lambda) + \sum_{j \in \mathbb{N}} \mu_s(\| (Q_j u^{\omega},Q_jv^{\omega}) \|_{\vec{W}^{\sigma,\infty}} > \sigma_j \lambda) \\
&\lesssim \sum_{j \in \mathbb{N}_0} e^{-\frac{c\lambda^2\sigma_j^2}{q} 2^{j(2s - 2 \sigma - 3 - \frac{2}{q})}} \lesssim e^{-c_1\lambda^2/q}
\end{align*}
for some constant $c_1 > 0$ independent of $\lambda$. Therefore, using Borel-Cantelli Lemma in the same way as above, we have that 
\begin{align*}
(u^{\omega},v^{\omega}) \in \vec{W}^{\sigma,\infty}(\mathbb{T}^3), \quad \omega\text{-almost surely}.
\end{align*}
\end{proof}

\section{Deterministic counterexample for $S(t)$ on $\mathbb{R}^d$}\label{sec:linearcex}

Consider the space variable as $x \in \mathbb{R}^d$. Let $\sigma \geq 1$ and  $\mathcal{F} : \mathcal{S}'(\mathbb{R}^d) \rightarrow \mathcal{S}'(\mathbb{R}^d)$ be the distributional Fourier transform map. Let $\psi$ be a symmetric mollifier, i.e. an element of $C^{\infty}_c(\mathbb{R}^d)$ such that
\begin{align*}
&\bullet \int_{\mathbb{R}^d}\psi(x)dx = 1, \\
&\bullet \lim_{\varepsilon \rightarrow 0^+} \varepsilon^{-d} \psi(\cdot/\varepsilon) = \delta \text{ in } \mathcal{S}'(\mathbb{R}^d), \\
&\bullet \psi(\xi) = \psi_1(|\xi|) \text{ for any } \xi \in \mathbb{R}^d,
\end{align*}
where $\psi_1 : [0,\infty) \rightarrow \mathbb{R}$ is smooth (chosen to have support contained in $[0,1]$) and $\delta$ is the Dirac delta distribution. Denote, for any $\xi \in \mathbb{R}^d$, $\psi_{1/4}(\xi) = 4^d \psi(4 \xi)$ and
\begin{align}\label{eq:def-h}
h(\xi) = 1 - (\psi_{1/4} \ast \mathrm1_{B(0,3/4)})(\xi),
\end{align}
where $\mathrm1_{B(0,3/4)}$ is the characteristic function of the ball centered at $0$ with radius $3/4$ in $\mathbb{R}^d$. Therefore, $h$ is a smooth function on $\mathbb{R}^d$ which is $0$ on $B(0,1/2)$ (as well as all its derivatives), and which is $1$ on $B(0,1)^c$. This setting allows us to define the main objects of this Section.

\begin{definition}\label{def:g-lambda}
Let $d \in \mathbb{N}$ and $\lambda \in (0,d)$. Define $F_{\lambda} = \mathcal{F}^{-1}[g_{\lambda}]$, where $g_{\lambda}(\xi) = \langle \xi \rangle^{-\sigma+1} h(\xi)|\xi|^{-\lambda}$.
\end{definition}
The main results of this Section are the following ones. Note that $F_{\lambda} \in \mathcal{S}'(\mathbb{R}^d)$ for any $\lambda \in \mathbb{R}$.

\begin{proposition}\label{prop:Flambda-inWsigmap}
Let $p\in[1,\infty)$. The elements $F_{\lambda}$ given in Definition \ref{def:g-lambda} belong to $W^{\sigma-1,p}(\mathbb{R}^d)$ if $\lambda \in (d - \frac{d}{p},d)$.
\end{proposition}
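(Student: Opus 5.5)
Note first that $\langle D\rangle^{\sigma-1}F_\lambda=\mathcal{F}^{-1}[h(\xi)|\xi|^{-\lambda}]=:G_\lambda$. So it suffices to show $G_\lambda\in L^p(\mathbb{R}^d)$ whenever $\lambda\in(d-\frac{d}{p},d)$. I will split $G_\lambda$ into two pieces: one regular at the origin and one rapidly decaying at infinity. Write $h|\xi|^{-\lambda}=|\xi|^{-\lambda}-(1-h)|\xi|^{-\lambda}$. Since $\lambda<d$, the function $|\xi|^{-\lambda}$ is locally integrable and is a tempered homogeneous distribution of degree $-\lambda$. Its inverse Fourier transform is $c_{d,\lambda}|x|^{\lambda-d}$, with $c_{d,\lambda}$ given by the standard Gamma-function formula (the appendix on homogeneous distributions can be invoked here). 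The second piece, $(1-h)|\xi|^{-\lambda}$, is compactly supported in $\overline{B}(0,1)$ and integrable. Hence its inverse Fourier transform $K$ is a bounded continuous function, and therefore
\begin{align*}
G_\lambda(x)=c_{d,\lambda}|x|^{\lambda-d}-K(x).
\end{align*}

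\emph{Behaviour near the origin.} On $B(0,1)$, $G_\lambda$ is bounded by $C(|x|^{\lambda-d}+1)$. This lies in $L^p(B(0,1))$ exactly when $p(d-\lambda)<d$, i.e. $\lambda>d-\frac{d}{p}$, which is the hypothesis. (For $p=1$ the condition reads $\lambda>0$, and it is automatic.)

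\emph{Behaviour at infinity.} This is the main step. Here I argue directly on $G_\lambda$ rather than on the splitting, since $|x|^{\lambda-d}$ alone need not be in $L^p$ at infinity. The symbol $m(\xi)=h(\xi)|\xi|^{-\lambda}$ vanishes near $0$ and is smooth. Every derivative satisfies $|\partial^\alpha m(\xi)|\lesssim\langle\xi\rangle^{-\lambda-|\alpha|}$. For $|\alpha|=N>d-\lambda$ these derivatives are integrable, so $x^\alpha G_\lambda=c\,\mathcal{F}^{-1}[\partial^\alpha m]$ is bounded. Thus $|G_\lambda(x)|\lesssim_N|x|^{-N}$ for $|x|\ge1$, for every such $N$. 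Taking $N>d/p$ gives $G_\lambda\in L^p(\{|x|\ge1\})$. To justify the identity $x^\alpha G_\lambda=c\,\mathcal{F}^{-1}[\partial^\alpha m]$ in $\mathcal{S}'$, it is enough that $m$ is a smooth tempered function. The singular part sits only at $\xi=0$, where $m$ vanishes identically, so no boundary terms arise.

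Combining the two regions gives $G_\lambda\in L^p(\mathbb{R}^d)$, and hence $F_\lambda=\langle D\rangle^{1-\sigma}G_\lambda\in W^{\sigma-1,p}(\mathbb{R}^d)$, with $\|F_\lambda\|_{W^{\sigma-1,p}}=\|G_\lambda\|_{L^p}$. The only delicate point is the identification $\mathcal{F}^{-1}|\xi|^{-\lambda}=c|x|^{\lambda-d}$, which requires $0<\lambda<d$. If one prefers to avoid it, an alternative is a dyadic decomposition $m=\sum_{j\ge0}m\varphi_j$. Each dyadic piece has inverse Fourier transform of size $2^{j(d-\lambda)}$, concentrated (with Schwartz tails) on $|x|\lesssim2^{-j}$. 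Its $L^p$ norm is then $\lesssim2^{j(d-\lambda-d/p)}$, which is summable exactly when $\lambda>d-\frac{d}{p}$.
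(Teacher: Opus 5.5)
Your proof is correct and is essentially the paper's argument (Lemma \ref{lemma:Flambda}). The paper also writes $\langle D\rangle^{\sigma-1}F_\lambda=\gamma(d,\lambda)|x|^{\lambda-d}+G_{\lambda,d}$ with $G_{\lambda,d}$ smooth, which gives local $L^p$-integrability exactly when $\lambda>d-\frac{d}{p}$. For the $|x|^{-N}$ decay at infinity it integrates by parts on Littlewood--Paley pieces and sums $2^{j(d-\lambda-N)}$ for $N>d-\lambda$; your observation that $\partial^\alpha m\in L^1$ for $|\alpha|>d-\lambda$ does this in a single global step. One small correction: the identity $\mathcal{F}^{-1}[|\xi|^{-\lambda}]=c_{d,\lambda}|x|^{\lambda-d}$ is not proved in the paper's appendix, which treats only the one-dimensional $x^{\alpha}\mathbb{I}_{(1,\infty)}$; the paper quotes it from the standard theory of homogeneous distributions.
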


\begin{theorem}\label{thm:nonpropag-linear-rn}
Let $d \geq 2$, $1 \leq p < \infty$, $t \neq 0$ and $\lambda \in (\max\{ \frac{d-1}{2} , d - \frac{d}{p}\},d)$ satisfying $\frac{d-1}{2} - \lambda \notin -\mathbb{N}$. Then $F_{\lambda} \in W^{\sigma-1,p}(\mathbb{R}^d)$ and $\frac{\sin(t|D|)}{|D|} F_{\lambda} \notin W^{\sigma,p}(\mathbb{R}^d)$ if one of the following conditions holds:
\begin{itemize}
\item $p \geq \frac{2d}{d-1}$.
\item $p < \frac{2d}{d+1}$ and $\lambda \leq \frac{d+1}{2} - \frac{1}{p}$.  
\end{itemize}
\end{theorem}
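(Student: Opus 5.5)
The membership $F_\lambda \in W^{\sigma-1,p}(\mathbb{R}^d)$ is exactly Proposition \ref{prop:Flambda-inWsigmap}, since $\lambda > d - \frac{d}{p}$. So the work lies in showing $\frac{\sin(t|D|)}{|D|}F_\lambda \notin W^{\sigma,p}(\mathbb{R}^d)$.

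\textbf{Reduction to a radial kernel.} The function $\langle D\rangle^{\sigma}\frac{\sin(t|D|)}{|D|}F_\lambda$ has Fourier transform
\begin{align*}
m(\xi)\, h(\xi)\, |\xi|^{-\lambda}\,\frac{e^{it|\xi|}-e^{-it|\xi|}}{2i}, \qquad m(\xi) = \frac{\langle \xi\rangle}{|\xi|}.
\end{align*}
On the support of $h$, $m$ is a smooth symbol of order $0$ equal to $1 + O(|\xi|^{-2})$. The correction $(m-1)h|\xi|^{-\lambda}e^{\pm it|\xi|}$ behaves like the main term with $\lambda$ replaced by $\lambda+2$. I would therefore show that this correction is strictly less singular, and reduce to the two radial distributions
\begin{align*}
K_\pm = \mathcal{F}^{-1}\big[h(\xi)|\xi|^{-\lambda}e^{\pm it|\xi|}\big].
\end{align*}
Writing them with the Bessel representation of the Fourier transform of radial functions (Appendix \ref{app:bessel}),
\begin{align*}
K_\pm(x) = c_d |x|^{-\frac{d-2}{2}}\int_0^\infty h_1(r)\, r^{\frac d2-\lambda} e^{\pm itr} J_{\frac{d-2}{2}}(r|x|)\,dr .
\end{align*}

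\textbf{Asymptotics of the kernel.} I would insert the large-argument asymptotic expansion of $J_{\frac{d-2}{2}}$. The leading term is $r^{-1/2}|x|^{-1/2}\big(e^{ir|x|}c_+ + e^{-ir|x|}c_-\big)$, and the remainder gains extra powers of $(r|x|)^{-1}$. This produces one-dimensional oscillatory integrals of the form
\begin{align*}
\int_0^\infty h_1(r)\, r^{\frac{d-1}{2}-\lambda} e^{ir(\pm t \pm |x|)}\,dr .
\end{align*}
Each is, modulo a smooth function, the Fourier transform of the homogeneous distribution $r_+^{\frac{d-1}{2}-\lambda}$, computed in Appendix \ref{app:temp-dist-xalpha+}. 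This is where the hypothesis $\frac{d-1}{2}-\lambda\notin-\mathbb{N}$ enters: it guarantees the explicit form
\begin{align*}
\Gamma\Big(\tfrac{d+1}{2}-\lambda\Big)\,(\mp i s)^{\lambda-\frac{d+1}{2}}, \qquad s = t \pm |x|,
\end{align*}
with nonvanishing constant and no logarithmic terms.

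From this I expect, for $|t|\neq 0$ and near the sphere $|x|=|t|$,
\begin{align*}
K_\pm(x) = A_\pm \big||x|-|t|\big|^{\lambda-\frac{d+1}{2}}\big(1+o(1)\big) + (\text{smoother terms}),
\end{align*}
with constants $A_\pm$ possibly different on the two sides of the sphere. I also need $K_\pm$ to be smooth and rapidly decaying away from the sphere, since the symbol is smooth and vanishes near the origin. Because $e^{it|\xi|}$ and $e^{-it|\xi|}$ meet the sphere with conjugate phases, I must check that the two contributions to $\sin$ do not cancel. This amounts to comparing $(is)^{\alpha}$ with $(-is)^{\alpha}$ for $\alpha = \lambda - \frac{d+1}{2}$, which are not proportional in a cancelling way when $\alpha\notin\mathbb{Z}$.

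\textbf{Divergence of the $L^p$ norm.} In the case $p<\frac{2d}{d+1}$, $\lambda\le \frac{d+1}{2}-\frac1p$, the exponent satisfies $(\lambda-\frac{d+1}{2})p\le -1$. Integrating $\big||x|-|t|\big|^{(\lambda-\frac{d+1}{2})p}$ over a shell around $|x|=|t|$ then diverges, so the $L^p$ norm is infinite.

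The case $p\ge \frac{2d}{d-1}$ is the step I expect to be the main obstacle. There $\lambda > d-\frac dp \ge \frac{d+1}{2}$, so the leading singularity is locally integrable and the obstruction must come from elsewhere. I would first examine the structure of the expansion more carefully: the subleading Bessel terms, and whether the precise size of the constant at the sphere forces failure of $L^p$ for derivatives. Failing that, I would try a duality argument. The counterexample for small $p'$ would be transferred to large $p$ by testing $\frac{\sin(t|D|)}{|D|}F_\lambda$ against a suitable smooth bump concentrated near the sphere $|x|=|t|$. The pairing would be computed explicitly via the same asymptotics, so as to exhibit unboundedness of $\|\langle D\rangle^\sigma \frac{\sin(t|D|)}{|D|}F_\lambda\|_{L^p}$. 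Controlling the non-leading terms uniformly, so that they cannot compensate the divergent main term, is the technical heart of this step.
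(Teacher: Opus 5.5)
For the second bullet ($p<\frac{2d}{d+1}$, $\lambda\le\frac{d+1}{2}-\frac1p$) your plan is sound and is essentially the paper's argument. It uses the Bessel representation, the large-argument expansion of $J_{\frac{d-2}{2}}$, the one-dimensional Fourier transform of $\rho^{\frac{d-1}{2}-\lambda}\mathbb{I}_{(1,\infty)}$, and divergence of $\int \bigl||x|-|t|\bigr|^{(\lambda-\frac{d+1}{2})p}\,dx$ across the sphere. That Fourier transform has the homogeneous form only modulo a smooth function, as you correctly say. The paper removes $\langle\xi\rangle/|\xi|$ more cheaply than you do. Since $|D|/\langle D\rangle$ is bounded on every $L^q$ (Lemma \ref{lemma:bdd-finitemeasure}), divergence of $\|\langle D\rangle^{\sigma-1}\sin(t|D|)F_\lambda\|_{L^p}$ already suffices, with no analysis of the $O(|\xi|^{-2})$ correction. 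Your check that the $e^{\pm it|D|}$ contributions do not cancel at $|x|=|t|$ is something the paper passes over, and it is genuinely needed; a nonzero coefficient on one side of the sphere is enough.

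The genuine gap is the first bullet, $p\ge\frac{2d}{d-1}$, for which you have no argument. In fact none can exist, because the assertion is false there. The paper claims divergence from non-integrability at $x=0$ of the prefactor $|x|^{-\frac{(d-1)p}{2}}$ in \eqref{eq:gralint-p+2}. However, splitting $J_{\frac{d-2}{2}}(\rho|x|)$ into $c_\pm(\rho|x|)^{-1/2}e^{\pm i\rho|x|}$ plus $R$ produces, for $\rho|x|\lesssim 1$, two pieces of size $(\rho|x|)^{-1/2}$ that cancel. The main term and $c_\lambda$ can each be as large as $|x|^{-\frac{d-1}{2}}$ near $0$, but their sum is smooth there. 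So the bracket plus $|x|^{\frac{d-1}{2}}c_\lambda(x)$ tends to $0$ as $x\to 0$, rather than staying away from $0$.

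As you note yourself, the kernel is smooth off the sphere. For $|x|\le |t|/A$ the phase $x\cdot\xi+t|\xi|$ has gradient of size at least $(1-\frac1A)|t|$. Hence the Littlewood--Paley/non-stationary phase argument of Theorem \ref{thm:flambda-tnotzero} gives smoothness there, just as it gives rapid decay for $|x|\ge A|t|$. On the sphere the singularity is $\bigl||x|-|t|\bigr|^{\lambda-\frac{d+1}{2}}$, which is bounded because $\lambda>d-\frac dp\ge\frac{d+1}{2}$, and the remainder is also bounded there. Hence $\langle D\rangle^{\sigma-1}\sin(t|D|)F_\lambda\in L^p(\mathbb{R}^d)$. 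Since $\widehat{F_\lambda}$ vanishes on $B(0,\frac12)$, $\langle\xi\rangle/|\xi|$ acts as a Mikhlin multiplier, so $\frac{\sin(t|D|)}{|D|}F_\lambda\in W^{\sigma,p}(\mathbb{R}^d)$.

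For $d=3$ this is completely explicit. $J_{1/2}(s)=\sqrt{2/(\pi s)}\,\sin s$ has no remainder and $c_+=-c_-$, and for $t>0$, $\lambda\in(2,3)$,
\[
\sin(t|D|)\,|x|^{\lambda-3}=\frac{\cot(\pi\lambda/2)}{2|x|}\Bigl[(|x|+t)^{\lambda-2}-\bigl||x|-t\bigr|^{\lambda-2}\Bigr].
\]
This is bounded near $x=0$ and near $|x|=t$. Note that $p\ge 3$ together with $\lambda>3-\frac3p$ forces $\lambda>2$.

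So neither the subleading Bessel terms nor a duality argument can close your gap. Pairing an $L^p$ function against bumps bounded in $L^{p'}$ always stays bounded. Duality instead yields a different, focusing counterexample: data singular on a sphere that concentrates at a point, which is how the $p>2$ half of the Peral--Miyachi loss is actually realized. A point singularity like that of $F_\lambda$ is spread by the flow onto the sphere, where $|x|^{\lambda-d}$ becomes $\bigl||x|-|t|\bigr|^{\lambda-\frac{d+1}{2}}$. This worsens $L^p$-integrability only when $p<2$.
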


Proposition \ref{prop:Flambda-inWsigmap} will be a straightforward consequence of Lemma \ref{lemma:Flambda} below, where we characterize the functions $F_{\lambda}$. In order to prove Theorem \ref{thm:nonpropag-linear-rn}, we provide some preliminary results that will be useful later to analyze $\langle D \rangle^{\sigma} \frac{e^{it|D|}}{|D|} F_{\lambda}$. Namely, Lemma \ref{lemma:ftransformradial_bessel} below will let us use an integral representation of radial elements in $L^p(\mathbb{R}^d)$, $1 \leq p < \infty$, in terms of Bessel functions, and Lemma \ref{lemma:invft-xalpha-nu} gives us the explicit form for the distributional inverse Fourier transforms of certain homogeneous distributions that will come later (its proof will follow the strategy from \cite{GS64}). For the sake of completeness, we include their proofs in Appendices \ref{app:bessel} and \ref{app:temp-dist-xalpha+}. 

\begin{remark}
For both cases in $\mathbb{R}^d$ and $\mathbb{T}^d$, the main computation to understand whether $S(t)(0,F_{\lambda})$ (resp. $S(t)(0,f_{\lambda})$) belongs to $W^{\sigma,p}(\mathbb{R}^d)$ (resp. $W^{\sigma,p}(\mathbb{T}^d)$) lies in the operator $\langle D \rangle^{\sigma} \frac{e^{\pm it|D|}}{|D|}$. From now on, we will mainly consider the term corresponding to $e^{it|D|}$ for simplicity, since the analysis with $e^{-it|D|}$ is analogous.
\end{remark}

\begin{lemma}\label{lemma:Flambda}
Let $d \in \mathbb{N}$, $x \in \mathbb{R}^d \setminus \{0\}$ and $\lambda \in (0,d)$. Then
\begin{align*}
\langle D \rangle^{\sigma-1} F_{\lambda}(x) = \gamma(d,\lambda) |x|^{-n+\lambda} + G_{\lambda,d}(x),
\end{align*}
where $G_{\lambda,d} \in C^{\infty}(\mathbb{R}^d)$ and $\gamma(d,\lambda)$ is a constant only depending on $n$ and $\lambda$. On the other hand,
\begin{align*}
|\langle D \rangle^{\sigma-1} F_{\lambda}(x)| \lesssim_{d,\lambda} |x|^{-N}
\end{align*} 
for any $N > d - \lambda$.
\end{lemma}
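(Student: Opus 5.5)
Note that $\langle D \rangle^{\sigma-1} F_{\lambda} = \mathcal{F}^{-1}[h(\xi)|\xi|^{-\lambda}]$, since the factor $\langle \xi \rangle^{-\sigma+1}$ in $g_\lambda$ is exactly cancelled. So the lemma is a statement about the inverse Fourier transform of the truncated homogeneous symbol $h|\xi|^{-\lambda}$ (here $n$ stands for $d$). The plan is to split the symbol as
\begin{align*}
h(\xi)|\xi|^{-\lambda} = |\xi|^{-\lambda} - (1-h(\xi))|\xi|^{-\lambda}.
\end{align*}
For $\lambda\in(0,d)$ the function $|\xi|^{-\lambda}$ is locally integrable and tempered. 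Its inverse Fourier transform is the classical Riesz potential kernel $\gamma(d,\lambda)|x|^{-d+\lambda}$, with
\begin{align*}
\gamma(d,\lambda) = \pi^{\lambda-d/2}\,\frac{\Gamma((d-\lambda)/2)}{\Gamma(\lambda/2)}
\end{align*}
(up to the $2\pi$ normalization). This follows from Gaussian subordination, $|\xi|^{-\lambda} = c\int_0^\infty t^{\lambda/2-1}e^{-t|\xi|^2}\,dt$, or from Lemma \ref{lemma:invft-xalpha-nu}. The second piece $(1-h)|\xi|^{-\lambda}$ is compactly supported in $\overline{B}(0,1)$ and integrable, since $\lambda<d$. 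Its inverse Fourier transform, which we call $G_{\lambda,d}$, is therefore an entire, in particular $C^\infty$, function. This already gives the decomposition.

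For the decay bound, away from the origin, I would instead use the smoothness of $h|\xi|^{-\lambda}$: it vanishes near $0$ and is a symbol of order $-\lambda$, i.e. $|\partial^\alpha(h|\xi|^{-\lambda})|\lesssim\langle\xi\rangle^{-\lambda-|\alpha|}$. For $|x|\ge1$ we write $x^\alpha \mathcal{F}^{-1}[h|\cdot|^{-\lambda}] = c\,\mathcal{F}^{-1}[\partial^\alpha(h|\cdot|^{-\lambda})]$. Once $|\alpha|>d-\lambda$, the right-hand side is the inverse Fourier transform of an $L^1$ function, hence bounded. This gives $|x|^{|\alpha|}|\langle D\rangle^{\sigma-1}F_\lambda(x)|\lesssim1$, and so decay $|x|^{-N}$ for every integer $N>d-\lambda$, and then for every real such $N$, since $|x|\ge1$. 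By iterating the argument, the decay is in fact faster than any power.

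The step needing most care is the meaning of the bound near $x=0$. There the first part shows that the function behaves like $|x|^{-d+\lambda}$, so the estimate $\lesssim|x|^{-N}$ with $N>d-\lambda$ still holds for $|x|\le1$, because $|x|^{-d+\lambda}\le|x|^{-N}$ there and $G_{\lambda,d}$ is bounded. Combining the two regions yields the bound on all of $\mathbb{R}^d\setminus\{0\}$.

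A minor technical point is justifying that $x^\alpha$ times a tempered distribution corresponds to differentiation of the symbol and that the result coincides pointwise with the function for $x\ne0$. This is handled by the fact that $\partial^\alpha(h|\xi|^{-\lambda})\in L^1$ for large $|\alpha|$, together with smoothness of the kernel off the origin, which follows from the decomposition above.
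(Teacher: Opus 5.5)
Your proof is correct. The decomposition part coincides with the paper's: you split $h|\xi|^{-\lambda}=|\xi|^{-\lambda}-(1-h)|\xi|^{-\lambda}$, identify the Riesz kernel, and use that the compactly supported integrable remainder has a smooth inverse transform. Your treatment of $|x|\le 1$ is also the paper's.

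The difference is how you get decay away from the origin. The paper splits $F_\lambda$ into Littlewood--Paley pieces and rescales each to the unit annulus. It then applies non-stationary phase to the linear phase $2^j|x|\,\eta\cdot x/|x|$, obtaining $|I_j|\lesssim|x|^{-N}2^{j(d-\lambda-N)}$, so $N>d-\lambda$ appears as summability of a geometric series in $j$. You instead do a single global integration by parts through $x^\alpha\mathcal{F}^{-1}[m]=c\,\mathcal{F}^{-1}[\partial^\alpha m]$. There the same threshold appears as integrability of $\partial^\alpha(h|\xi|^{-\lambda})=O(\langle\xi\rangle^{-\lambda-|\alpha|})$.

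What your version buys:
it is shorter;
it makes the passage from an $\mathcal{S}'$ identity to a pointwise bound explicit;
it shows directly that the decay at infinity is faster than any power, so the restriction $N>d-\lambda$ is dictated only by the $|x|^{\lambda-d}$ singularity at the origin.

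What the paper's version buys: it is the template reused in Theorem \ref{thm:flambda-tnotzero}. There the symbol $e^{it|\xi|}h(\xi)|\xi|^{-\lambda}$ gains no decay under $\partial_\xi^\alpha$, so your identity no longer helps. A genuine non-stationary phase argument with $|\nabla\phi|\ge 1-1/A$ is needed instead.

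Two small remarks. First, Lemma \ref{lemma:invft-xalpha-nu} is a one-dimensional statement about the truncated power $x^\alpha\mathbb{I}_{(1,\infty)}$ and does not give the $d$-dimensional Riesz kernel, so rely on the Gaussian subordination argument there. Second, with your splitting, $G_{\lambda,d}$ is minus the inverse Fourier transform of $(1-h)|\xi|^{-\lambda}$.
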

\begin{proof}
Let $M > 0$ and $x \in B(0,M)^c$. Let $\varphi \in C^{\infty}_c(\mathbb{R}^d\setminus \{0\})$ with $\supp \varphi \subset \{\xi \in \mathbb{R}^d : \frac{1}{2} \leq |\xi| \leq 2\}$ satisfying
\begin{align*}
\sum_{j = -\infty}^{\infty} \varphi(2^{-j}\xi) = 1, \quad \xi \neq 0,
\end{align*}
so that for any $\xi \neq 0$ there are at most two terms in this sum which are nonzero (see page $197$ of \cite{MS1} for details). This allows us to consider the Littlewood-Paley decomposition
\begin{align}
			F_{\lambda} = \sum_{j \geq -1} P_{j}F_{\lambda},
			\label{eq:lp-decomp}
		\end{align}
		where $P_j = \varphi(D/2^j)$, and where we used that $g_{\lambda}(\xi) = 0$ if $|\xi| \leq \frac{1}{2}$. Denote $B_j = \overline{B}(0,2^{j+1}) \setminus \overline{B}(0,2^{j-1})$ for $j \in \mathbb{Z}$. Then
\begin{align*}
|\langle D \rangle^{\sigma-1} F_{\lambda}(x)| \leq \sum_{j \geq -1} \left| \int_{B_j} \varphi(\xi/2^j) g_{\lambda}(\xi) e^{i\xi x} d\xi \right| = \sum_{j\geq -1} |I_j|.
\end{align*}
By doing a change of variables $\xi = 2^j \eta$, we obtain
\begin{align}\label{eq:oscint-0}
&I_j = 2^{j(d - \lambda)} \int_{B_0} h(2^j \eta) \varphi(\eta) |\eta|^{-\lambda} e^{i(2^j |x|) \eta \cdot \frac{x}{|x|}} d\eta, \quad j \geq -1.
\end{align}
Let $j \geq 1$. Then \eqref{eq:oscint-0} is an oscillatory integral with a phase $(2^j |x|)\phi(\eta)$ for which $\nabla_{\eta} \phi(\eta) = x/|x|$, which is not zero because $|x| \geq M$ and which satisfies $|\nabla_{\eta}\phi(\eta)| = 1$ for any $\eta \in B_0$. Namely
\begin{align}\label{eq:oscint-o1}
|I_j| = 2^{j(d - \lambda)} \left| \int_{B_0} \varphi(\eta) |\eta|^{-\lambda} e^{i(2^j |x|) \eta \cdot \frac{x}{|x|}} d\eta \right|,
\end{align}
where $\varphi(\eta)|\eta|^{-\lambda}$ is smooth and compactly supported in $B_0$. Then, for any $N \in \mathbb{N}$,
\begin{align*}
|I_j| \lesssim_{d,\lambda,\varphi} |x|^{-N} 2^{j(d - \lambda - N)}.
\end{align*}
Regarding $j \in \{-1,0\}$, note that $\varphi(\eta) |\eta|^{-\lambda} h(2^j \eta)$ is also smooth and compactly supported in $B_0$, so we can obtain \eqref{eq:oscint-o1} too. All in all,
\begin{align}\label{eq:nonstatphase-oscint}
|\langle D \rangle^{\sigma-1} F_{\lambda}(x)| \lesssim |x|^{-N} \left( \sum_{j \geq -1} 2^{j(d-\lambda-N)} \right),
\end{align}
which requires $N > d - \lambda$ to be convergent at the right hand side. \\

On the other hand, we have
\begin{align*}
g_{\lambda}(\xi) = \langle \xi \rangle^{-\sigma+1}|\xi|^{-\lambda} +  \langle \xi \rangle^{-\sigma+1} |\xi|^{-\lambda} ( h(\xi) - 1 )
\end{align*}
in $\mathcal{S}'(\mathbb{R}^d)$ (recall that $\lambda \in (0,d)$), so
\begin{align}\label{eq:Flambda-proof1}
\langle D \rangle^{\sigma-1} F_{\lambda} = \mathcal{F}[|\xi|^{-\lambda}] + G_{\lambda,d}
\end{align}
where
\begin{align*}
G_{\lambda,d}(x) = \int_{B(0,1)} e^{i\xi x} |\xi|^{-\lambda}(h(\xi) - 1)d\xi.
\end{align*}
Then, by the dominated convergence theorem, $G_{\lambda,d}$ is infinitely differentiable as long as $\lambda < n$. Regarding the first term, from standard theory\footnote{In fact, if we take Gaussian test functions and perform standard computations, we obtain
\begin{align*}
\gamma(d,\lambda) = \pi^{\lambda-\frac{d}{2}}\frac{\Gamma(\frac{d-\lambda}{2})}{\Gamma(\frac{\lambda}{2})}
\end{align*}} on homogeneous distributions one can prove that $\mathcal{F}[|\xi|^{-\lambda}] = \gamma(d,\lambda) |x|^{\lambda-n}$ in $\mathcal{S}'(\mathbb{R}^d)$ (see \cite{wolff} for  details), where $\gamma(d,\lambda)$ is some constant.
Therefore
\begin{align*}
\langle D \rangle^{\sigma-1} F_{\lambda}(x) = \gamma(d,\lambda) |x|^{-n+\lambda} + G_{\lambda,d}(x)
\end{align*}
in $\mathcal{S}'(\mathbb{R}^d)$. The modulus of this last expression is less than $|x|^{-N}$, $N > d - \lambda$, for $|x| < M$ with $M > 0$ small enough. This concludes the proof.
\end{proof}

\begin{lemma}\label{lemma:ftransformradial_bessel}
		Let $p \in [1,\infty)$. Suppose $f$ is a radial function in  $L^p(\mathbb{R}^d)$, $d \geq 2$; thus, $f(x) = f_0(|x|)$ for a.e. $x \in \mathbb{R}^d$. Moreover, assume that the integral
\begin{align}\label{eq:convcond-radialexp}
\int_0^{\infty} f_0(\rho) J_{\frac{d-2}{2}}(\rho|\xi|) \rho^{d/2} d\rho
\end{align}		
converges in a.e. $\xi \in \mathbb{R}^d$. Then the Fourier transform $\mathcal{F}[f]$ is given also by a radial function, and it has the form
		\begin{align*}
			\mathcal{F}[f](\xi) = c(d) |\xi|^{-\frac{d-2}{2}} \int_0^{\infty} f_0(\rho) J_{\frac{d-2}{2}}(\rho|\xi|) \rho^{d/2} d\rho \quad \text{for a.e. } \xi \in \mathbb{R}^d,
		\end{align*}
		where $c(d)$ is some constant depending only on the dimension.
	\end{lemma}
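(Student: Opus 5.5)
The plan is the classical one: reduce to the Fourier transform of a radial function via polar coordinates, and then use the standard integral representation of the Fourier transform of the surface measure on the unit sphere in terms of Bessel functions. I would first prove the formula for nice radial $f$ (say $f \in L^1(\mathbb{R}^d)$ with $f_0$ compactly supported away from infinity) and then pass to general radial $f \in L^p$ by a truncation and limiting argument. Throughout, the convergence hypothesis \eqref{eq:convcond-radialexp} is used.

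\textbf{Step 1 (the $L^1$ case).} For radial $f\in L^1(\mathbb{R}^d)$, write in polar coordinates $x=\rho\omega$:
\begin{align*}
\mathcal{F}[f](\xi)=\int_0^\infty f_0(\rho)\rho^{d-1}\int_{S^{d-1}} e^{-i\rho\,\omega\cdot\xi}\,d\sigma(\omega)\,d\rho .
\end{align*}
By rotation invariance, the inner integral depends only on $\rho|\xi|$. The classical identity (proved via the Poisson integral representation of $J_\nu$, which I would recall in Appendix \ref{app:bessel}) is
\begin{align*}
\widehat{d\sigma}(\eta)=(2\pi)^{d/2}|\eta|^{-\frac{d-2}{2}}J_{\frac{d-2}{2}}(|\eta|).
\end{align*}
To verify it, I reduce to the integral over $\omega_1=\cos\theta$, with weight $(1-s^2)^{\frac{d-3}{2}}$, and compare with the Poisson formula
\begin{align*}
J_\nu(r)=\frac{(r/2)^\nu}{\Gamma(\nu+\frac12)\Gamma(\frac12)}\int_{-1}^1 e^{irs}(1-s^2)^{\nu-\frac12}\,ds .
\end{align*}
Substituting gives
\begin{align*}
\rho^{d-1}(\rho|\xi|)^{-\frac{d-2}{2}}J_{\frac{d-2}{2}}(\rho|\xi|)=|\xi|^{-\frac{d-2}{2}}\rho^{d/2}J_{\frac{d-2}{2}}(\rho|\xi|),
\end{align*}
which is exactly the claimed formula. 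The use of Fubini here is justified by $f\in L^1$ and $|\widehat{d\sigma}|\le|S^{d-1}|$.

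\textbf{Step 2 (general $L^p$).} For $f\in L^p$ with $1\le p<\infty$, set $f_R=f\mathrm{1}_{B(0,R)}$, which lies in $L^1\cap L^p$. Then $f_R\to f$ in $L^p$, hence in $\mathcal{S}'$, so $\mathcal{F}[f_R]\to\mathcal{F}[f]$ in $\mathcal{S}'$. By Step 1,
\begin{align*}
\mathcal{F}[f_R](\xi)=c(d)|\xi|^{-\frac{d-2}{2}}\int_0^R f_0(\rho)J_{\frac{d-2}{2}}(\rho|\xi|)\rho^{d/2}\,d\rho ,
\end{align*}
and by hypothesis this converges pointwise a.e., as $R\to\infty$, to the claimed expression $K(\xi)$. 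The main obstacle is identifying the distributional limit with the pointwise limit $K$, since pointwise a.e. convergence alone does not give $\mathcal{S}'$ convergence.

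For $1\le p\le2$, Hausdorff--Young gives convergence of $\mathcal{F}[f_R]$ in $L^{p'}$, so a subsequence converges a.e. and the limit coincides with $K$. For $p>2$ I would instead use the uniform bound $|J_\nu(r)|\lesssim r^{-1/2}$ to control the tails of the integrals. Concretely, I would aim to show that the family $\mathcal{F}[f_R]$ is locally uniformly integrable, or at least bounded in $L^1_{loc}$ away from $\xi=0$ in the required sense, and then pass to the limit against test functions $\phi\in C_c^\infty(\mathbb{R}^d\setminus\{0\})$.

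If that argument turns out to be too delicate, the fallback is a Gaussian regularisation. Replace the truncation by $f_\epsilon=f e^{-\epsilon|x|^2}$, compute $\mathcal{F}[f_\epsilon]$ via Step 1, and note that $\mathcal{F}[f_\epsilon]\to\mathcal{F}[f]$ in $\mathcal{S}'$. It then remains to show that the corresponding Bessel integrals converge a.e. to $K$, which is an Abel-summation version of the assumed convergence. Radiality of $\mathcal{F}[f]$ is immediate in either route, from rotation invariance of $\mathcal{F}$.
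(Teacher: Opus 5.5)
\textbf{The gap is the case $p>2$.} Your Step 1 is correct, and so is Step 2 for $1\le p\le 2$. The hypothesis is exactly $\lim_{R\to\infty}\int_0^R$, so Hausdorff--Young convergence of $\mathcal{F}[f_R]$ in $L^{p'}$, together with an a.e.\ convergent subsequence, identifies $\mathcal{F}[f]$ with the Bessel expression $K$. That is cleaner than the paper's route. The paper applies the $L^1$ formula to $\phi_\varepsilon*f$, which is not in $L^1$ in general, and then invokes dominated convergence without any majorant.

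For $p>2$, which is exactly the step you flag and leave open, you never obtain $\mathcal{F}[f_R]\to K$ in $\mathcal{S}'$ or any substitute. The local-uniform-integrability plan is not carried out. The Gaussian fallback only moves the problem. Abel summability of the Bessel integral to $K$ is indeed automatic from the assumed convergence. But it again yields only a.e.\ convergence of $\mathcal{F}[fe^{-\epsilon|x|^2}]$, and that cannot determine the distributional limit. For example, $f=\mathcal{F}^{-1}[d\sigma]$ is radial and lies in $L^p$ for $p>\frac{2d}{d-1}$. By Weber's second exponential integral, its Gaussian-regularised Bessel integrals are a constant times $|\xi|^{-\nu}\frac{1}{2\epsilon}e^{-\frac{1+|\xi|^2}{4\epsilon}}I_\nu\bigl(\frac{|\xi|}{2\epsilon}\bigr)$ with $\nu=\frac{d-2}{2}$. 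This tends to $0$ for every $|\xi|\neq1$, whereas $\mathcal{F}[f]=d\sigma$.

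\textbf{Under your (improper) reading of the hypothesis, no argument can close this gap, because the statement then fails for large $p$.} Take a probability measure $\nu_0$ on $[1,2]$ carried by a closed Lebesgue-null set $E$, with $\widehat{\nu_0}(R)\to0$ (a Menshov/Salem-type Rajchman measure). Let $\sigma_s$ be surface measure on $\{|\xi|=s\}$ and set $f=\mathcal{F}^{-1}\bigl[\int\sigma_s\,d\nu_0(s)\bigr]$. Then $f$ is radial with $|f(x)|\lesssim(1+|x|)^{-\frac{d-1}{2}}$, so $f\in L^p$ for $p>\frac{2d}{d-1}$. Now use Lommel's formula
\begin{align*}
\int_0^R J_\nu(s\rho)J_\nu(r\rho)\,\rho\,d\rho=\frac{R\bigl[rJ_\nu(sR)J_\nu'(rR)-sJ_\nu'(sR)J_\nu(rR)\bigr]}{s^2-r^2},
\end{align*}
together with the large-argument asymptotics of $J_\nu$. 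For each $r=|\xi|\notin E$, the truncated integrals in the hypothesis become finite sums of bounded factors times $\widehat{\psi\nu_0}(\pm R)$, with $\psi$ smooth near $E$, plus $O(R^{-1})$. Hence they tend to $0$. So the hypothesis holds with $K=0$ a.e., while $\mathcal{F}[f]$ is a nonzero singular measure. Using Salem measures with power Fourier decay, the same construction works for every $p>2$.

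\textbf{What your strategy can actually prove.}
\begin{itemize}
\item[(a)] The case $p\le 2$, as you already do.
\item[(b)] The general case under an assumption that supplies domination. Suppose the Bessel integral converges \emph{absolutely} for a set of radii of positive measure. Averaging $|J_\nu(\rho r)|$ over such a set gives $\int_1^\infty|f_0(\rho)|\rho^{\frac{d-1}{2}}d\rho<\infty$. Then $|J_\nu(s)|\lesssim s^{-1/2}$ gives $\sup_R|\mathcal{F}[f_R](\xi)|\lesssim 1+|\xi|^{-\frac{d-1}{2}}$, and dominated convergence yields $\mathcal{F}[f_R]\to K$ in $\mathcal{S}'$.
\item[(c)] The case where $\mathcal{F}[f]$ is known a priori to be a tempered locally integrable function. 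Then your Gaussian means converge to $\mathcal{F}[f]$ at its Lebesgue points, which closes the fallback argument.
\end{itemize}
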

	\begin{remark}
	Lemma \ref{lemma:ftransformradial_bessel} will be applied to $f = e^{it|\xi|} \langle \xi \rangle^{\sigma-1} g_{\lambda}(\xi)$ in the proof of Theorem \ref{thm:flambda-tnotzero}. Hypothesis \eqref{eq:convcond-radialexp} will be implicitly given in such a proof.
	\end{remark}
	\begin{proof}
	See Appendix \ref{app:bessel}.
	\end{proof}

We denote $x_+$ and $x_-$ the functions $x \mathbb{I}_{(0,\infty)}(x)$ and $-x \mathbb{I}_{(-\infty,0)}(x)$ defined on $\mathbb{R}$. The proof of the following Lemma is given in Appendix \ref{app:temp-dist-xalpha+}.
	
	\begin{lemma}\label{lemma:invft-xalpha-nu}
	Let $\alpha \in \mathbb{C}$ such that $\text{Re}(\alpha) \notin -\mathbb{N}$. Then, the tempered distributions $x^{\alpha} \mathbb{I}_{(1,\infty)}(x)$ admit the inverse distributional Fourier transform
	\begin{align*}
	&\mathcal{F}^{-1}[x^{\alpha} \mathbb{I}_{(1,\infty)}(x)](\theta)= J(\alpha) \{ e^{i\frac{\pi}{2}(\alpha+1)} \theta_+^{-\alpha-1} + e^{-i\frac{\pi}{2}(\alpha+1)} \theta_-^{-\alpha-1} \}
	\end{align*}
	where $J : \mathbb{C} \rightarrow \mathbb{C}$ is a function such that
	\begin{align*}
	|J(\alpha)| \leq \Gamma(\text{Re}(\alpha)+1) + \frac{\pi}{2}.
	\end{align*}
	\end{lemma}

Furthermore, we provide a result that will simplify the computation. Namely, that the non-convergence of 
\begin{align*}
\left\|\langle D \rangle^{\sigma} \frac{e^{it|D|}}{|D|} F_{\lambda} \right\|_{L^p(\mathbb{R}^d)}
\end{align*} 
follows from the non-convergence of 
\begin{align*}
\left\|\langle D \rangle^{\sigma-1} e^{it|D|} F_{\lambda} \right\|_{L^p(\mathbb{R}^d)}.
\end{align*}

\begin{lemma}\label{lemma:bdd-finitemeasure}
Let $1 \leq q \leq \infty$ and $F \in L^q(\mathbb{R}^d)$. Then
\begin{align*}
\left\| \frac{|D|}{\langle D \rangle} F \right\|_{L^q(\mathbb{R}^d)} \lesssim \|F\|_{L^q(\mathbb{R}^d)}.
\end{align*}
\end{lemma}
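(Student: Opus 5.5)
Since $|D|/\langle D\rangle$ is the Fourier multiplier with symbol $m(\xi)=|\xi|\langle\xi\rangle^{-1}$, the statement is that $m$ is an $L^q(\mathbb{R}^d)$ Fourier multiplier for every $1\le q\le\infty$. I will write $m = 1 - (1-m)$ and prove that $1-m$ is the Fourier transform of a finite Borel measure (in fact an $L^1$ function). The conclusion then follows by Young's inequality: $\|\mu\ast F\|_{L^q}\le \|\mu\|_{\mathcal M}\|F\|_{L^q}$ holds for all $q\in[1,\infty]$, and the endpoints $q=1,\infty$ come out for free.

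The key computation is
\begin{align*}
1-m(\xi)=\frac{\langle\xi\rangle-|\xi|}{\langle\xi\rangle}=\frac{1}{\langle\xi\rangle(\langle\xi\rangle+|\xi|)}.
\end{align*}
This symbol decays like $\langle\xi\rangle^{-2}$ at infinity. It is smooth away from the origin, and at the origin it has only a conical singularity $\approx 1-|\xi|$. I will split it with a smooth cutoff $\chi\in C_c^\infty$ that equals $1$ near $0$.

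\emph{High-frequency piece.} $(1-\chi)(1-m)$ is a classical symbol of order $-2$: its derivatives satisfy $|\partial^\beta\,\cdot\,|\lesssim\langle\xi\rangle^{-2-|\beta|}$. A standard Littlewood--Paley/integration-by-parts argument shows that its inverse Fourier transform $K_\infty$ satisfies $|K_\infty(x)|\lesssim_N |x|^{-N}$ for $|x|\ge1$ (integrate by parts many times) and $|K_\infty(x)|\lesssim |x|^{2-d}$ for $|x|\le 1$ (with a logarithm if $d=2$). Both are integrable, so $K_\infty\in L^1$. Alternatively, one can write $1-m$ as $\langle\xi\rangle^{-2}$ times a symbol of order $0$ and use Bessel-potential kernels, which lie in $L^1$.

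\emph{Low-frequency piece.} This is the step I expect to be the main obstacle. The piece $\chi(1-m)$ equals $\chi(\xi)\,a(\xi)$ with $a$ smooth except for the term $|\xi|$. I will expand near the origin:
\begin{align*}
1-m(\xi)=1-|\xi|\,\langle\xi\rangle^{-1},
\end{align*}
with $\langle\xi\rangle^{-1}$ smooth. So it suffices to show that $\chi(\xi)|\xi|b(\xi)$ has an $L^1$ inverse Fourier transform for smooth $b$. To do this I will decompose dyadically, $\chi|\xi|b=\sum_{j\le 0}\varphi(2^{-j}\xi)\chi|\xi| b$, as in the proof of Lemma \ref{lemma:Flambda}. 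By scaling, each piece has kernel $L^1$ norm $\lesssim 2^{j}$: the symbol is $2^j$ times a bump adapted to scale $2^j$, whose kernel has $L^1$ norm uniformly bounded. Summing over $j\le0$ converges.

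Combining the pieces, $1-m=\widehat{K}$ with $K\in L^1$. Hence $\frac{|D|}{\langle D\rangle}F=F-K\ast F$, and $\|\frac{|D|}{\langle D\rangle}F\|_{L^q}\le(1+\|K\|_{L^1})\|F\|_{L^q}$.
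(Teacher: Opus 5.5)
Your proof is correct, and its skeleton is the paper's: represent $|\xi|/\langle\xi\rangle$ as the Fourier transform of a finite measure (for you, $\delta-K\,dx$ with $K\in L^1$) and conclude by Minkowski/Young, which covers every $q\in[1,\infty]$ at once. The difference is that the paper does not prove the finite-measure property; it cites Lemma 2 on p.~133 of \cite{S70}, whose proof is algebraic and much shorter than yours. There one writes $|\xi|/\langle\xi\rangle=(1-\langle\xi\rangle^{-2})^{1/2}=1+\sum_{m\ge1}A_m\langle\xi\rangle^{-2m}$. The binomial coefficients of $(1-t)^{1/2}$ satisfy $A_m<0$ and $\sum_{m\ge1}|A_m|=1$ (set $t=1$). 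Each $\langle\xi\rangle^{-2m}$ is the Fourier transform of the positive Bessel kernel $G_{2m}$ of unit mass. Hence the measure is $\delta-\sum_{m\ge1}|A_m|G_{2m}$, so your $K$ equals $\sum_{m\ge1}|A_m|G_{2m}\ge0$ with $\|K\|_{L^1}=1$. This needs no kernel estimates and no separate treatment of the conical point at $\xi=0$. Your route (order $-2$ symbol bounds at high frequency, dyadic rescaling giving $\sum_{j\le j_0}2^j$ near the origin) is more laborious. However, it does not depend on writing the multiplier as an absolutely convergent series in $\langle\xi\rangle^{-2}$. It therefore adapts to multipliers with the same qualitative structure (a constant plus a negative-order symbol at infinity, a conical singularity at $0$) for which no such identity is available.

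A few small slips do not affect the conclusion. For $d=1$ the high-frequency kernel is bounded near $0$ rather than $O(|x|^{2-d})$. The dyadic sum runs over $j\le j_0$ with $j_0$ fixed by $\supp\chi$ (e.g.\ $j\le1$ if $\supp\chi\subset B(0,1)$), not over $j\le0$. Your parenthetical alternative is not a shortcut: $G_2\in L^1$ alone does not suffice, since the kernel of a zeroth-order symbol is in general not a finite measure, so one is back to the $S^{-2}$ kernel bounds anyway.
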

\begin{proof}
Note that $\frac{|D|}{\langle D \rangle}$ is an operator given by the Fourier multiplier $\frac{|\xi|}{(1 + |\xi|^2)^{1/2}}$. The latter is the Fourier transform of a finite measure $\beta$ in $\mathbb{R}^d$, see \cite{S70} (Lemma 2 in page 133) for the details. This implies that
\begin{align*}
\left\| \frac{|D|}{\langle D \rangle} F \right\|_{L^q(\mathbb{R}^d)} = \left\| \beta \ast F \right\|_{L^q(\mathbb{R}^d)} \lesssim_{\beta} \|F\|_{L^q(\mathbb{R}^d)}.
\end{align*}
\end{proof}

The following result provides us with the explicit form of $\langle D \rangle^{\sigma-1} e^{it|D|} F_{\lambda}$, which will let us identify the conditions on $\lambda$ and $p$ for the non-convergence of $\|\frac{\sin(t|D|)}{|D|} F_{\lambda}\|_{W^{\sigma,p}(\mathbb{R}^d)}$ for any $t \neq 0$.
\begin{theorem}\label{thm:flambda-tnotzero}
Let $d>1$, $t \in \mathbb{R} \setminus \{0\}$, $\lambda \in (\frac{d-1}{2},d)$ such that $\frac{d-1}{2} - \lambda \notin -\mathbb{N}$, and $J$ the function given in Lemma \ref{lemma:invft-xalpha-nu}. Then the following holds:
\begin{itemize}
\item For a.e. $x \in \mathbb{R}^d \setminus \{0\}$ such that $|x| \neq t$ and $-|x| \neq t$:
\begin{align*}
\langle &D \rangle^{\sigma-1} e^{it|D|} F_{\lambda}(x) \nonumber \\
&= J\left(\frac{d-1}{2}-\lambda\right)  |x|^{-\frac{d-1}{2}} \left\{ c_+ [ e^{i\frac{\pi}{2}(\frac{d+1}{2} - \lambda)} (|x| + t)_+^{\lambda-\frac{d+1}{2}}\right. \\
&\hspace{48mm} + e^{-i\frac{\pi}{2}(\frac{d+1}{2} - \lambda)} \left(|x| + t\right)_-^{\lambda-\frac{d+1}{2}}] \nonumber \\
&\left. + c_-  [ e^{i\frac{\pi}{2}(\frac{d+1}{2} - \lambda)}(-|x| + t)_+^{\lambda-\frac{d+1}{2}} + e^{-i\frac{\pi}{2}(\frac{d+1}{2} - \lambda)} \left( -|x| + t\right)_-^{\lambda-\frac{d+1}{2}} ] \right\} \nonumber  \\
&+ c_{\lambda}(x) + D_{\lambda}(x),
\end{align*}
for $c_+$ and $c_-$ constants, $D_{\lambda} \in C^{\infty}(\mathbb{R}^d)$ and 
\begin{align*}
|c_{\lambda}(x)| \lesssim \min\{|x|^{-\frac{d+1}{2}} , |x|^{-\frac{d-1}{2}}\}
\end{align*}
if $\lambda \in (\frac{d+1}{2},d)$, and 
\begin{align*}
|c_{\lambda}(x)| \lesssim |x|^{-\frac{d+1}{2}}
\end{align*}
if $\lambda \in (\frac{d-1}{2},d)$.

\item  Moreover, for any $A > 1$, $x \in \mathbb{R}^d \setminus B(0,A|t|)$ and $N > d - \lambda$, we have that
\begin{align*}
|\langle D \rangle^{\sigma-1} e^{it|D|} F_{\lambda}(x)| \lesssim_{d,\lambda} |x|^{-N}.
\end{align*}

\end{itemize}
\end{theorem}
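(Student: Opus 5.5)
The plan is to compute $\langle D\rangle^{\sigma-1}e^{it|D|}F_\lambda$ directly on the Fourier side. Its Fourier transform is the radial function $e^{it|\xi|}h(\xi)|\xi|^{-\lambda}$. We write it via Lemma \ref{lemma:ftransformradial_bessel} as a Bessel integral and then extract the singular behaviour near the light cone $|x|=|t|$ from the large-argument asymptotics of $J_{\frac{d-2}{2}}$.

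\textbf{Step 1: Bessel representation.} Writing $\rho=|\xi|$ and $r=|x|$, Lemma \ref{lemma:ftransformradial_bessel} (applied, as noted in the remark, to the inverse transform; the integrals below converge as oscillatory integrals for $r\neq |t|$) gives
\begin{align*}
\langle D\rangle^{\sigma-1}e^{it|D|}F_\lambda(x)=c(d)\,r^{-\frac{d-2}{2}}\int_0^\infty h_0(\rho)\rho^{-\lambda}e^{it\rho}J_{\frac{d-2}{2}}(r\rho)\rho^{d/2}\,d\rho .
\end{align*}
We split $h_0=\mathbb{I}_{(1,\infty)}+(h_0-\mathbb{I}_{(1,\infty)})$. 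The second piece is supported in $[1/2,1]$, so it contributes an integral over a compact $\rho$-range of a smooth function. This piece, together with the Fourier transform of a compactly supported function (hence entire), goes into $D_\lambda\in C^\infty$.

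\textbf{Step 2: Bessel asymptotics.} For the piece on $(1,\infty)$ we use the expansion (Appendix \ref{app:bessel})
\begin{align*}
J_\nu(s)=\sqrt{\tfrac{2}{\pi s}}\cos\!\left(s-\tfrac{\nu\pi}{2}-\tfrac{\pi}{4}\right)+R_\nu(s),\qquad |R_\nu(s)|\lesssim s^{-3/2}\quad (s\geq 1).
\end{align*}
With $s=r\rho$, the main term produces
\begin{align*}
r^{-\frac{d-1}{2}}\int_1^\infty \rho^{\frac{d-1}{2}-\lambda}\,e^{i\rho(t\pm r)}\,d\rho .
\end{align*}
This is precisely $2\pi\,\mathcal F^{-1}[x^{\alpha}\mathbb{I}_{(1,\infty)}]$ evaluated at $\theta=t\pm r$ with $\alpha=\frac{d-1}{2}-\lambda$. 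Lemma \ref{lemma:invft-xalpha-nu} applies because $\alpha\notin-\mathbb N$, and it yields the $(\pm r+t)_\pm^{\lambda-\frac{d+1}{2}}$ terms with the phases $e^{\pm i\frac{\pi}{2}(\frac{d+1}{2}-\lambda)}$. The constants $c_\pm$ absorb $e^{\mp i(\frac{\nu\pi}{2}+\frac\pi4)}$ and $c(d)$.

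The remainder $R_\nu$ gives $c_\lambda(x)$. When $r\rho\geq1$ we bound
\begin{align*}
r^{-\frac{d-2}{2}}\int \rho^{\frac d2-\lambda}(r\rho)^{-3/2}\,d\rho .
\end{align*}
This converges at infinity when $\lambda>\frac{d-1}{2}$ and gives $\lesssim r^{-\frac{d+1}{2}}$. When $r<1$ and $\rho<1/r$ we instead use $|J_\nu(s)|\lesssim s^\nu$, and the remainder on this range is bounded by the analogous cut-off integral. For $\lambda>\frac{d+1}{2}$ the extra decay in $\rho$ allows the alternative bound $|c_\lambda|\lesssim r^{-\frac{d-1}{2}}$, which gives the $\min$. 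I expect this bookkeeping near $r\to0$ and the justification that the split integrals are genuine distributional identities to be the main technical obstacle. I would handle it by inserting a convergence factor $e^{-\varepsilon\rho}$, running the argument, and letting $\varepsilon\to0$ in $\mathcal S'$ away from $r=|t|$.

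\textbf{Step 3: decay for $|x|\geq A|t|$.} We repeat the Littlewood--Paley argument of Lemma \ref{lemma:Flambda}. The dyadic pieces now carry the phase $2^j(\eta\cdot x+t|\eta|)$, whose gradient is $x+t\eta/|\eta|$. On the region $|x|\geq A|t|$ with $A>1$ this gradient has modulus at least $(1-A^{-1})|x|$, so it is bounded below by a multiple of $|x|$. Non-stationary phase then gives $|I_j|\lesssim |x|^{-N}2^{j(d-\lambda-N)}$, and summing over $j$ requires $N>d-\lambda$.
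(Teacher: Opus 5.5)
Your proposal is correct and follows essentially the same route as the paper. The shared steps are: the Bessel representation of Lemma \ref{lemma:ftransformradial_bessel}, with the compactly supported piece $|\xi|\le 1$ sent into $D_\lambda$; the leading Bessel asymptotics converted into the light-cone singularities via Lemma \ref{lemma:invft-xalpha-nu} with $\alpha=\frac{d-1}{2}-\lambda$; the remainder giving $c_\lambda$; and the same Littlewood--Paley non-stationary phase argument for $|x|\ge A|t|$.

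Your separate treatment of the range $r\rho<1$ is unnecessary. It also bounds the wrong quantity there: for small $s$ it is the subtracted $s^{-1/2}$ term, not $J_\nu$, that controls $R_\nu$. The paper simply uses $|s^{1/2}R_{\frac{d-2}{2}}(s)|\lesssim\min\{1,s^{-1}\}$ for all $s>0$. Bounding this minimum by $s^{-1}$ or by $1$ gives the two estimates on $c_\lambda$ directly.
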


\begin{proof}
Let $A > 1$ and $x \in \overline{B}(0,A|t|)^c$. We can repeat the procedure from the Littlewood-Paley decomposition in \eqref{eq:lp-decomp} with a slight modification. Indeed, maintaining the same notation,
\begin{align*}
&|\langle D \rangle^{\sigma-1} e^{it|D|} F_{\lambda}(x)| \leq \sum_{j \geq -1} \left| \int_{B_j} e^{i \xi x + t |\xi|}(\varphi(\xi/2^j) |\xi|^{-\lambda} h(\xi)) d\xi \right| \nonumber \\
&= \sum_{j \geq 1} 2^{j(d-\lambda)} \left| \int_{B_0} e^{i 2^j |x| (\frac{\eta x + t |\eta|}{|x|})}(\varphi(\eta) |\eta|^{-\lambda} ) d\eta \right| \nonumber \\
&+ \sum_{j=-1}^0 2^{j(d-\lambda)} \left| \int_{B_0} e^{i 2^j |x| (\frac{\eta x + t |\eta|}{|x|})}(\varphi(\eta) |\eta|^{-\lambda} h(2^j \eta)) d\eta \right|
\end{align*}
The phase $\phi(\xi) = \frac{\eta x + t |\eta|}{|x|}$ does not have critical points for $|x| > |t|$. Indeed,
\begin{align*}
\nabla_{\eta} \phi(\eta) = \frac{1}{|x|} \left( x + \frac{t \eta}{|\eta|} \right),
\end{align*}
so $\nabla_{\eta} \phi(\eta) = 0$ would imply $|x| = |t|$. Moreover, since $|x| > A |t|$, we have that for any $\eta \in B_0$
\begin{align*}
|\nabla_{\eta} \phi(\eta)| \geq 1 - \frac{|t|}{|x|} > 1 - \frac{1}{A}.
\end{align*}
Moreover, $\varphi(\eta) |\eta|^{-\lambda} h(2^j \eta)$ is smooth and compactly supported in $B_0$ for any $j \geq -1$ (notice that $h(2^j \cdot) = 1$ for $j \geq 1$).  Thus we can do the standard integration by parts to obtain
\begin{align*}
|\langle D \rangle^{\sigma-1} e^{it|D|} F_{\lambda}(x)| \lesssim_{d,\lambda,\varphi} |x|^{-N} \left( \sum_{j \geq -1} 2^{j(d-\lambda-N)} \right),
\end{align*}
where $N > d - \lambda$ suffices for the convergence of the right-hand side. 

Let $x \in \mathbb{R}^d \setminus \{0\}$ such that $|x| \neq t$ and $-|x| \neq t$. Since $\lambda < n$, there exists some $q > 1$ such that $\lambda > \frac{d}{q}$. Recalling Definition \ref{def:g-lambda}, this implies that 
\begin{align*}
\langle \xi \rangle^{\sigma-1} e^{it|\xi|} g_{\lambda}(\xi) \in L^q(\mathbb{R}^d).
\end{align*}
Then, for almost every $x \in \mathbb{R}^d$, from Lemma \ref{lemma:ftransformradial_bessel} we have that
\begin{align*}
&\langle D \rangle^{\sigma-1} e^{it|D|} F_{\lambda}(x) \nonumber \\
&=c(d) |x|^{-\frac{d-2}{2}} \int_1^{\infty}  e^{it\rho}  \rho^{-\lambda} J_{\frac{d-2}{2}}(\rho|x|) \rho^{d/2} d\rho + \int_{|\xi| \leq 1} e^{ix\xi + it|\xi|} |\xi|^{-\lambda} h(\xi) d\xi \nonumber \\
&=(i) + (ii).  
\end{align*}
The object $(ii)$ is in $C^{\infty}(\mathbb{R}^d)$ for each $t \neq 0$ and $\lambda < n$. We focus on $(i)$. From Bessel functions asymptotic expansion \eqref{eq:asympBessel}, we have that
\begin{align*}
J_{\frac{d-2}{2}}(s)  - \left( c_+ s^{-1/2} e^{is} + c_- s^{-1/2} e^{-is} \right) = R_{\frac{d-2}{2}}(s)
\end{align*}
for $R_{\frac{d-2}{2}}(s) = \mathcal{O}(s^{-3/2})$ when $s \rightarrow \infty$, and 
$c_+, c_- \in \mathbb{C}$ constants. This expansion and the Poisson representation of Bessel functions given in Lemma \ref{lemma:bessel_kreal} implies that, for any $s \in \mathbb{R} \setminus \{0\}$,
\begin{align}\label{eq:ourbessel-asymp}
|s^{\frac{1}{2}}R_{\frac{d-2}{2}}(s)| \lesssim \min\{ 1 , s^{-1} \}.
\end{align}
Observe that $\frac{d-2}{2} > -\frac{1}{2}$ if and only if $d > 1$, i.e. $d>1$ is required for a well definition of the Bessel functions as we have defined them. We write
\begin{align*}
&(i) = |x|^{-\frac{d-2}{2}} \left( c_+ \int_1^{\infty}  e^{i\rho|x| + it\rho} \rho^{-\lambda} (\rho|x|)^{-1/2} \rho^{d/2} d\rho \right.   \nonumber \\
&\left. + c_- \int_1^{\infty} e^{-i\rho|x| + it\rho} \rho^{-\lambda} (\rho|x|)^{-1/2} \rho^{d/2} d\rho \right) + |x|^{-\frac{d-2}{2}} \int_1^{\infty} e^{it\rho} \rho^{-\lambda} R_{\frac{d-2}{2}}(\rho|x|) \rho^{d/2} d\rho \nonumber \\
&= |x|^{-\frac{d-1}{2}} \left(c_+ \int_{1}^{\infty}  e^{i \rho ( t + |x|)}  \rho^{\frac{d-1}{2} - \lambda} d\rho + c_- \int_{1}^{\infty}  e^{i \rho( t - |x|)}  \rho^{\frac{d-1}{2} - \lambda} d\rho \right) \nonumber \\
&+ |x|^{-\frac{d-2}{2}} \int_{1}^{\infty} e^{it\rho} R_{\frac{d-2}{2}}(\rho |x|) \rho^{\frac{d}{2} -\lambda} d\rho   = |x|^{-\frac{d-1}{2}}(*) + |x|^{-\frac{d-2}{2}}(**).
\end{align*}
Assume that $\frac{d-1}{2} - \lambda \notin -\mathbb{N}$. Thus, for $(*)$ we can use Lemma \ref{lemma:invft-xalpha-nu} with $\alpha = \frac{d-1}{2} - \lambda$, and $\theta = t + |x|$ for the first term and $\theta = t - |x|$ for the second one:
\begin{align*}
(*) &= J\left(\frac{d-1}{2}-\lambda\right) \left\{c_+ [ e^{i\frac{\pi}{2}(\frac{d+1}{2} - \lambda)} (|x| + t)_+^{\lambda-\frac{d+1}{2}}\right. \\
&\hspace{48mm} + e^{-i\frac{\pi}{2}(\frac{d+1}{2} - \lambda)} \left(|x| + t\right)_-^{\lambda-\frac{d+1}{2}}] \nonumber \\
&\hspace{48mm} + c_-  [ e^{i\frac{\pi}{2}(\frac{d+1}{2} - \lambda)}(-|x| + t)_+^{\lambda-\frac{d+1}{2}} \\
&\left. \hspace{48mm}+ e^{-i\frac{\pi}{2}(\frac{d+1}{2} - \lambda)} \left( -|x| + t\right)_-^{\lambda-\frac{d+1}{2}} ] \right\}
\end{align*}
With respect to $(**)$, we consider the inequality \eqref{eq:ourbessel-asymp} to obtain that
\begin{align*}
&|x|^{-\frac{d-2}{2}}\left|  (**) \right| \lesssim |x|^{-\frac{d-1}{2}} \int_{1}^{\infty}\rho^{\frac{d-1}{2} - \lambda} \min\{ 1 , (\rho|x|)^{-1} \} d\rho \nonumber.
\end{align*}
If we bound the integrand by $\min\{ 1 , (\rho|x|)^{-1} \} \leq (\rho|x|)^{-1}$ we obtain
\begin{align*}
|x|^{-\frac{d-2}{2}}\left|  (**) \right| \lesssim \left( \int_{1}^{\infty} \rho^{\frac{d-3}{2} - \lambda} d\rho \right) |x|^{-\frac{d+1}{2}},
\end{align*}
where the integral in the right hand side converges if $\lambda \in (\frac{d-1}{2},d)$. On the other hand, if we bound the integrand by $\min\{ 1 , (\rho|x|)^{-1} \} \leq 1$, then we obtain
\begin{align*}
|x|^{-\frac{d-2}{2}}\left|  (**) \right| \lesssim \left( \int_1^{\infty} \rho^{\frac{d-1}{2} - \lambda} d\rho \right) |x|^{-\frac{d-1}{2}}.
\end{align*}
Now, the integral in the right hand side of the last inequality converges if $\lambda \in (\frac{d+1}{2},d)$. Denote $|x|^{-\frac{d-2}{2}} (**) = c_{\lambda}(x)$, so that 
\begin{align*}
|c_{\lambda}(x)| \lesssim \min\{|x|^{-\frac{d+1}{2}} , |x|^{-\frac{d-1}{2}}\}
\end{align*}
if $\lambda \in (\frac{d+1}{2},d)$, and 
\begin{align*}
|c_{\lambda}(x)| \lesssim |x|^{-\frac{d+1}{2}}
\end{align*}
if $\lambda \in (\frac{d-1}{2},d)$. This concludes the proof.
\end{proof}

\begin{proof}[Proof of Theorem \ref{thm:nonpropag-linear-rn}]

Let $A>1$. We already know from Theorem \eqref{thm:flambda-tnotzero} that $\langle D \rangle^{\sigma-1} e^{it|D|} F_{\lambda}$ has a finite norm in $L^p(B(0,A|t|)^c)$.\\

We focus on the part of the integration domain concerning $|x| < A|t|$. Namely, from Theorem \ref{thm:flambda-tnotzero} and its counterpart with $e^{-it|D|}$ we know that the key problematic parts of the integration domain to check whether 
$$\langle D \rangle^{\sigma-1} \sin(t|D|) F_{\lambda}$$
is in $L^p(\mathbb{R}^d)$ or not correspond to $\pm|x| \in (-t-\varepsilon,-t+\varepsilon)$ (regarding the $e^{it|D|}$ term) and $\pm|x| \in (t-\varepsilon,t+\varepsilon)$ (regarding the $e^{-it|D|}$ term), for $0 < \varepsilon \ll 1$. These points $x$ in $\mathbb{R}^d$ exist in $B(0,A|t|)$ because $A > 1$. By Theorem \ref{thm:flambda-tnotzero} and triangle inequality, the quantity
\begin{align*}
| \langle D \rangle^{\sigma-1} \sin(t|D|) F_{\lambda} \|_{L^p(\mathbb{R}^d)}
\end{align*}
is bounded below by the maximum between
\begin{align}\label{eq:boundbelow-1}
\left\| \right. & |x|^{-\frac{d-1}{2}} \left( c_+ [ e^{i\frac{\pi}{2}(\frac{d+1}{2} - \lambda)} (|x| + t)_+^{\lambda-\frac{d+1}{2}} + e^{-i\frac{\pi}{2}(\frac{d+1}{2} - \lambda)} \left(|x| + t\right)_-^{\lambda-\frac{d+1}{2}}] \nonumber  \right. \\
&\left. + c_-  [ e^{i\frac{\pi}{2}(\frac{d+1}{2} - \lambda)}(-|x| + t)_+^{\lambda-\frac{d+1}{2}} + e^{-i\frac{\pi}{2}(\frac{d+1}{2} - \lambda)} \left(-|x| + t\right)_-^{\lambda-\frac{d+1}{2}} ]\right. \nonumber \\
& \left. + c_+ [ e^{i\frac{\pi}{2}(\frac{d+1}{2} - \lambda)} (|x| - t)_+^{\lambda-\frac{d+1}{2}} + e^{-i\frac{\pi}{2}(\frac{d+1}{2} - \lambda)} \left( |x| - t \right)_-^{\lambda-\frac{d+1}{2}}] \nonumber \right. \\
&\left. \left. + c_-  [ e^{i\frac{\pi}{2}(\frac{d+1}{2} - \lambda)}(-|x| - t)_+^{\lambda-\frac{d+1}{2}} + e^{-i\frac{\pi}{2}(\frac{d+1}{2} - \lambda)} \left(-|x| - t\right)_-^{\lambda-\frac{d+1}{2}} ] \right) + c_{\lambda}  \right\|_{L^p(B(0,A|t|))} \nonumber \\
& \qquad - \left\| D_{\lambda} \right\|_{L^p(B(0,A|t|))}
\end{align}
and
\begin{align}\label{eq:boundbelow-2}
\left\| \right. & \left. |x|^{-\frac{d-1}{2}} \left( c_+ [ e^{i\frac{\pi}{2}(\frac{d+1}{2} - \lambda)} (|x| + t)_+^{\lambda-\frac{d+1}{2}} + e^{-i\frac{\pi}{2}(\frac{d+1}{2} - \lambda)} \left(|x| + t\right)_-^{\lambda-\frac{d+1}{2}}] \nonumber \right. \right. \\
&\left. + c_-  [ e^{i\frac{\pi}{2}(\frac{d+1}{2} - \lambda)}(-|x| + t)_+^{\lambda-\frac{d+1}{2}} + e^{-i\frac{\pi}{2}(\frac{d+1}{2} - \lambda)} \left(-|x| + t\right)_-^{\lambda-\frac{d+1}{2}} ]\right. \nonumber \\
& \left. + c_+ [ e^{i\frac{\pi}{2}(\frac{d+1}{2} - \lambda)} (|x| - t)_+^{\lambda-\frac{d+1}{2}} + e^{-i\frac{\pi}{2}(\frac{d+1}{2} - \lambda)} \left( |x| - t \right)_-^{\lambda-\frac{d+1}{2}}] \nonumber \right. \\
&\left. \left. + c_-  [ e^{i\frac{\pi}{2}(\frac{d+1}{2} - \lambda)}(-|x| - t)_+^{\lambda-\frac{d+1}{2}} + e^{-i\frac{\pi}{2}(\frac{d+1}{2} - \lambda)} \left(-|x| - t\right)_-^{\lambda-\frac{d+1}{2}} ]  \right) \right\|_{L^p(B(0,A|t|))} \nonumber \\
&\qquad - \left\| c_{\lambda} + D_{\lambda} \right\|_{L^p(B(0,A|t|))}.
\end{align}
Recall that $D_{\lambda} \in C^{\infty}(\mathbb{R}^d)$, so in particular $D_{\lambda} \in L^p_{\text{loc}}(\mathbb{R}^d)$. We focus on the remaining terms, and distinguish between the following cases:

\begin{itemize}
\item Let $\lambda \in (\frac{d+1}{2},d)$. By \eqref{eq:boundbelow-1}
\begin{align}\label{eq:gralint-p+2}
\| &\langle D \rangle^{\sigma-1} \sin(t|D|) F_{\lambda} \|_{L^p(\mathbb{R}^d)} \nonumber \\
&\qquad \gtrsim_{D_{\lambda}} \left(\int_{B(0,A|t|)} |x|^{-\frac{(d-1)p}{2}} \right. \nonumber \\
&\qquad \times\left| c_+ [ e^{i\frac{\pi}{2}(\frac{d+1}{2} - \lambda)} (|x| + t)_+^{\lambda-\frac{d+1}{2}} + e^{-i\frac{\pi}{2}(\frac{d+1}{2} - \lambda)} \left( |x| + t \right)_-^{\lambda-\frac{d+1}{2}}] \right. \nonumber \\
&\qquad \left. \left. + c_-  [ e^{i\frac{\pi}{2}(\frac{d+1}{2} - \lambda)}(-|x| + t)_+^{\lambda-\frac{d+1}{2}} + e^{-i\frac{\pi}{2}(\frac{d+1}{2} - \lambda)} \left( -|x| + t \right)_-^{\lambda-\frac{d+1}{2}} ]\right. \right. \nonumber \\
&\qquad \left. \left. + c_+ [ e^{i\frac{\pi}{2}(\frac{d+1}{2} - \lambda)} (|x| - t)_+^{\lambda-\frac{d+1}{2}} + e^{-i\frac{\pi}{2}(\frac{d+1}{2} - \lambda)} \left( |x| - t \right)_-^{\lambda-\frac{d+1}{2}}] \nonumber \right. \right. \\
&\qquad + c_-  [ e^{i\frac{\pi}{2}(\frac{d+1}{2} - \lambda)}(-|x| - t)_+^{\lambda-\frac{d+1}{2}} + e^{-i\frac{\pi}{2}(\frac{d+1}{2} - \lambda)} \left( -|x| - t \right)_-^{\lambda-\frac{d+1}{2}} ] \nonumber \\
&\hspace{40mm} \left. \left. + |x|^{\frac{d-1}{2}}c_{\lambda}(x) \right|^p  dx\right)^{1/p}.
\end{align}
If we had divergence around $|x| = \pm t$, this would follow from
\begin{align}\label{eq:divt}
\left( \lambda - \frac{d+1}{2} \right)p + 1 \leq 0 \Longleftrightarrow \lambda \leq \frac{d+1}{2} - \frac{1}{p}.
\end{align}
However, since $\lambda > d - \frac{d}{p}$, then \eqref{eq:divt} implies that $p<2$, so there is no divergence around $|x| = \pm t$ for $p>2$. On the other hand, the divergence around $x = 0$ would follow from
\begin{align*}
-p\left( \frac{d-1}{2} \right) + n \leq 0 \Longleftrightarrow p \geq \frac{2d}{d-1}.
\end{align*}
This covers the case $p>2$, for which divergence in \eqref{eq:gralint-p+2} comes from the factor $|x|^{-\frac{p(d-1)}{2}}$.\\

\item Let $\lambda \in (\frac{d-1}{2},d)$. By \eqref{eq:boundbelow-2}
\begin{align}\label{eq:gralint-p-2}
\| &\langle D \rangle^{\sigma-1} \sin(t|D|) F_{\lambda} \|_{L^p(\mathbb{R}^d)} \nonumber \\
&\qquad \gtrsim_{D_{\lambda}} \left( \int_{B(0,A|t|)} |x|^{-\frac{(d-1)p}{2}}\nonumber \right. \\
& \qquad \times \left| c_+ [ e^{i\frac{\pi}{2}(\frac{d+1}{2} - \lambda)} (|x| + t)_+^{\lambda-\frac{d+1}{2}} + e^{-i\frac{\pi}{2}(\frac{d+1}{2} - \lambda)} \left(|x| + t\right)_-^{\lambda-\frac{d+1}{2}}] \nonumber \right. \\
& \qquad \left. + c_-  [ e^{i\frac{\pi}{2}(\frac{d+1}{2} - \lambda)}(-|x| + t)_+^{\lambda-\frac{d+1}{2}} + e^{-i\frac{\pi}{2}(\frac{d+1}{2} - \lambda)} \left(-|x| + t\right)_-^{\lambda-\frac{d+1}{2}} ]\right. \nonumber \\
& \qquad \left. + c_+ [ e^{i\frac{\pi}{2}(\frac{d+1}{2} - \lambda)} (|x| - t)_+^{\lambda-\frac{d+1}{2}} + e^{-i\frac{\pi}{2}(\frac{d+1}{2} - \lambda)} \left( |x| - t \right)_-^{\lambda-\frac{d+1}{2}}] \nonumber \right. \\
&\qquad  \left. \left. + c_-  [ e^{i\frac{\pi}{2}(\frac{d+1}{2} - \lambda)}(-|x| - t)_+^{\lambda-\frac{d+1}{2}} + e^{-i\frac{\pi}{2}(\frac{d+1}{2} - \lambda)} \left(-|x| - t\right)_-^{\lambda-\frac{d+1}{2}} ] \right|^p  dx \right)^{1/p} \nonumber \\
&\hspace{30mm} - \left(\int_{B(0,A|t|)} |x|^{-\frac{p(d+1)}{2}} dx\right)^{1/p}.
\end{align}
Notice that the factors $|x|^{-\frac{(d+1)p}{2}}$ and $|x|^{-\frac{(d-1)p}{2}}$ will not diverge as long as
\begin{align*}
-p\left(\frac{d+1}{2} \right) + d > 0 \Longleftrightarrow p < \frac{2d}{d+1},
\end{align*}
while the divergence around $|x| = \pm t$ now follows from \eqref{eq:divt}. 
\end{itemize}
In summary, the divergence of the norm in $L^p(\mathbb{R}^d)$ follows from one of the following cases:
\begin{itemize}
\item $p < \frac{2d}{d+1}$ and $\frac{d-1}{2} < \lambda \leq \frac{d+1}{2} - \frac{1}{p}$, with divergence around $\pm |x| = t$, 
\item $p \geq \frac{2d}{d-1}$ and $\frac{d+1}{2} < \lambda$ with divergence around $x = 0$. Notice that  $\lambda > \frac{d+1}{2}$ is implied by $\lambda > d - \frac{d}{p}$, required for $F_{\lambda} \in W^{\sigma-1,p}(\mathbb{R}^d)$.
\end{itemize}
\end{proof}

\section{Deterministic counterexample for $S(t)$ on $\mathbb{T}^d$}
\label{sec:localization}
Our objective in this Section is to periodize $F_{\lambda}$ and $\langle D \rangle^{\sigma} \frac{\sin(t|D|)}{|D|} F_{\lambda}$, and to prove that the periodized counterparts serve also as counterexample for $x \in \mathbb{T}^d$. Namely, $f_{\lambda}$ is given by
\begin{align}\label{eq:flambdadef-1}
\sum_{k \in \mathbb{Z}^d} \langle D \rangle^{\sigma - 1} F_{\lambda}(x+k), \quad x \in \mathbb{T}^d,
\end{align}
which is an element in $W^{\sigma-1,p}(\mathbb{T}^d)$ for $\sigma \geq 1$ and the values of $p$ and $\lambda$ given in Lemma \ref{lemma:flambda-wsigmap-per}.

We will use the periodization strategy used in \cite{W65}, based on the Poisson summation formula. We start giving the main result of this Section, which proof is presented below.

\begin{theorem}\label{thm:divergence-torus}
Let $\sigma \geq 1$, $d \geq 2$, $\lambda \in (\frac{d-1}{2},d)$ (satisfying $\frac{d-1}{2} - \lambda \notin -\mathbb{N}$) and $f_{\lambda}$ given in \eqref{eq:flambdadef}. Assume one of the following hypothesis:
\begin{itemize}
\item $p \geq \frac{2d}{d-1}$ and $\lambda > \frac{d+1}{2}$,
\item $p < \frac{2d}{d+1}$ and $\lambda \leq \frac{d+1}{2} - \frac{1}{p}$.
\end{itemize}
Then 
\begin{align*}
S(t)(0,f_{\lambda}) \notin W^{\sigma,p}(\mathbb{T}^d)
\end{align*}
for any $t \in (-1,1) \setminus \{0\}$ if $p \geq \frac{2d}{d-1}$, and for any $t \in (-1/2,1/2) \setminus \{0\}$ if $p < \frac{2d}{d+1}$.
\end{theorem}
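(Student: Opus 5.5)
We plan to transfer the Euclidean divergence of Theorem \ref{thm:nonpropag-linear-rn} to the torus by periodization via the Poisson summation formula, following \cite{W65}. By the Poisson summation formula, the function on $\mathbb{T}^d$ with Fourier coefficients $\langle k\rangle^{\sigma}\frac{\sin(t|k|)}{|k|}\widehat{f_\lambda}(k)$, i.e. $\langle D\rangle^{\sigma}S(t)(0,f_\lambda)$, coincides (in $\mathcal{D}'(\mathbb{T}^d)$, and a.e. once convergence is checked) with the periodization
\begin{align*}
\Phi_t(x)=\sum_{k\in\mathbb{Z}^d} H_t(x+k),\qquad H_t:=\langle D\rangle^{\sigma}\frac{\sin(t|D|)}{|D|}F_\lambda \text{ on } \mathbb{R}^d,
\end{align*}
since the Fourier transform of $H_t$ sampled at integer frequencies (with $2\pi$ normalizations absorbed) gives exactly these coefficients. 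The first step is therefore to justify convergence of this sum away from a null set. For this we use the second bullet of Theorem \ref{thm:flambda-tnotzero} (and its $e^{-it|D|}$ counterpart): for $|x|>A|t|$ one has $|\langle D\rangle^{\sigma-1}e^{\pm it|D|}F_\lambda(x)|\lesssim |x|^{-N}$ for all $N>d-\lambda$, and we take $N>d$. To pass from $\langle D\rangle^{\sigma-1}e^{it|D|}F_\lambda$ to $H_t$ we use Lemma \ref{lemma:bdd-finitemeasure} in the reverse direction: $\langle D\rangle^{\sigma-1}\sin(t|D|)F_\lambda=\frac{|D|}{\langle D\rangle}H_t$, so if $H_t$ were in $L^p$ then so would be the former. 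The same logic works on $\mathbb{T}^d$, since $\frac{|k|}{\langle k\rangle}$ restricted to $\mathbb{Z}^d$ is still a multiplier bounded on $L^p(\mathbb{T}^d)$ by transference of the finite measure $\beta$. Hence it suffices to show that the periodization $\Psi_t(x)=\sum_k \langle D\rangle^{\sigma-1}\sin(t|D|)F_\lambda(x+k)$ is not in $L^p(\mathbb{T}^d)$.

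Next we localize. We choose the fundamental domain $Q=[-\tfrac12,\tfrac12)^d$. For $|t|<1$ (respectively $|t|<1/2$), we pick $A>1$ close to $1$ such that all translates $x+k$ with $k\neq 0$ and $x$ near the singular set satisfy $|x+k|>A|t|$. Then the tail $\sum_{k\neq0}$ is bounded by $\sum_{k\neq 0}|x+k|^{-N}$, which is bounded on a neighbourhood of the relevant singular set. More precisely, the tail is in $L^\infty$ of a small ball $B(0,r)$ when $p\ge \frac{2d}{d-1}$ (singularity at $x=0$), and of a thin shell $\{||x|-|t||<\varepsilon\}\subset Q$ when $p<\frac{2d}{d+1}$. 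The stricter bound $|t|<1/2$ in the second case is exactly what makes that sphere fit inside $Q$ and avoid the translates' singular spheres. On that region, then, $\Psi_t=\langle D\rangle^{\sigma-1}\sin(t|D|)F_\lambda+O(1)$.

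Finally, on the region itself we reuse the lower bounds \eqref{eq:boundbelow-1}--\eqref{eq:boundbelow-2} and the case analysis from the proof of Theorem \ref{thm:nonpropag-linear-rn}. In the first case, $|x|^{-\frac{d-1}{2}}$ times a factor bounded away from zero near $x=0$ is non-integrable to power $p$ when $p\ge\frac{2d}{d-1}$, while $c_\lambda$ is dominated there since $\lambda>\frac{d+1}{2}$. In the second case, $(|x|\mp t)^{\lambda-\frac{d+1}{2}}$ is non-$L^p$ on the shell when $\lambda\le\frac{d+1}{2}-\frac1p$, while $|x|^{-\frac{d+1}{2}}$ is integrable there. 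The main obstacle is this last point: we must ensure that the combination of the $c_\pm$ terms does not cancel near the singularity. For the shell, only one of the eight terms is singular at $|x|=|t|$, so no cancellation is possible provided $c_\pm\neq0$ and $J(\frac{d-1}{2}-\lambda)\neq0$. We would verify these nonvanishing conditions from the explicit Bessel asymptotics and Lemma \ref{lemma:invft-xalpha-nu}. Near $x=0$, the bracket tends to a constant multiple of $|t|^{\lambda-\frac{d+1}{2}}$, and we would check that this constant is nonzero.
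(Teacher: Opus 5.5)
You follow the paper's proof closely: periodization by Poisson summation, reduction through the boundedness of $|D|/\langle D\rangle$, localization to $B(0,\gamma|t|)$ or to a shell around $|x|=|t|$ with the tail handled by the decay bullet of Theorem \ref{thm:flambda-tnotzero}, then the Euclidean lower bounds. The argument breaks at exactly the step you flag for $p\geq\frac{2d}{d-1}$, and it cannot be repaired.

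First, $c_\lambda$ is not dominated near $x=0$. For $\lambda>\frac{d+1}{2}$ the only bound available there is $|c_\lambda(x)|\lesssim|x|^{-\frac{d-1}{2}}$, which is the same order as the main term.

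More seriously, there is no singularity at the origin at all. For $|x|<|t|$ the phase $\xi\cdot x+t|\xi|$ has $\xi$-gradient of length at least $|t|-|x|>0$. The dyadic integration by parts from the second bullet of Theorem \ref{thm:flambda-tnotzero} then shows that $\langle D\rangle^{\sigma-1}e^{\pm it|D|}F_\lambda$ is $C^\infty$ on $B(0,|t|)$: the singularity of the datum at $0$ is carried to the sphere $|x|=|t|$. The factor $|x|^{-\frac{d-1}{2}}$ in the leading term comes from using the large-argument Bessel expansion in the range $\rho|x|\lesssim 1$, where it is not valid, and $c_\lambda$ cancels it. In $d=3$ this is explicit. $J_{1/2}(s)$ is a multiple of $s^{-1/2}\sin s$, so $R_{1/2}\equiv 0$, $c_\lambda\equiv 0$ and $c_-=-c_+$. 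The constant you want to show is nonzero is proportional to $c_++c_-$, which is zero.

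In fact the first case of the statement is false, and this is the regime $p\geq 3$, $\lambda>3-\frac{3}{p}$ used for Theorem \ref{thm:nonpropag-nonlinear}. Sum the dispersive bound $\|e^{it|D|}P_j\delta_0\|_{L^\infty}\lesssim 2^{jd}(2^j|t|)^{-\frac{d-1}{2}}$ against the weight $2^{-j\lambda}$. This gives $\|\langle D\rangle^{\sigma-1}e^{\pm it|D|}F_\lambda\|_{L^\infty(\mathbb{R}^d)}\lesssim_t\sum_{j\geq -1}2^{j(\frac{d+1}{2}-\lambda)}<\infty$ whenever $\lambda>\frac{d+1}{2}$.

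Combined with the rapid decay away from the sphere, the function lies in $L^1\cap L^\infty$, so its periodization is bounded on $\mathbb{T}^d$. Since $\langle k\rangle/|k|$ restricted to the support of $h$ is a Mikhlin multiplier, $S(t)(0,f_\lambda)\in W^{\sigma,p}(\mathbb{T}^d)$ for every $1<p<\infty$. This agrees with the remark in the proof of Theorem \ref{thm:nonpropag-linear-rn} that divergence on the sphere forces $p<2$. For $p>2$, Peral's unboundedness comes from duality, that is, from data singular on a sphere that refocus at time $t$, not from $F_\lambda$ itself.

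The paper's proof has the same flaw. In \eqref{eq:gralint-p+2} it absorbs $|x|^{\frac{d-1}{2}}c_\lambda$ into the bracket and attributes the divergence to $|x|^{-\frac{p(d-1)}{2}}$, without showing that the bracket stays away from zero.

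For $p<\frac{2d}{d+1}$ your plan is sound and matches the paper's. Your choice $N>d$ for the tail is the right one. The paper bounds the $p$-th power of the tail sum by the sum of $p$-th powers with only $N>\max\{d-\lambda,\frac{d}{p}\}$, which is not justified.

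One correction in that case: it is not true that only one term is singular at $|x|=|t|$. On each side of the sphere, one term from $e^{it|D|}$ and one from $e^{-it|D|}$ carry the same power $\bigl||x|-|t|\bigr|^{\lambda-\frac{d+1}{2}}$, so you must compute the combined coefficient. Take $c_\pm$ proportional to $e^{\mp i\frac{(d-1)\pi}{4}}$, and put $\Gamma(\frac{d+1}{2}-\lambda)$ in place of $J$ (up to smooth errors). The combined coefficients are then proportional to $\sin\frac{\pi(\lambda-1)}{2}$ for $|x|>|t|$ and to $\sin\frac{\pi(d-\lambda)}{2}$ for $|x|<|t|$. These cannot both vanish for $\lambda\in(\frac{d-1}{2},\frac{d+1}{2})$, so the divergence does hold in this case.
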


\begin{remark}
Regarding Theorem \ref{thm:nonpropag-linear}, notice that the inequality $\lambda > \frac{d+1}{2}$ needed for the case $p \geq \frac{2d}{d-1}$ in Theorem \ref{thm:divergence-torus} is implied by the condition $\lambda > d - \frac{d}{p}$ required for $f_{\lambda} \in W^{\sigma-1,p}(\mathbb{T}^d)$.
\end{remark}

We now provide the analogous result to Proposition \ref{prop:Flambda-inWsigmap} for $f_{\lambda}$. Recall  Definition \ref{def:g-lambda} and the Fourier coefficients given in \eqref{eq:flambdadef}.
\begin{lemma}\label{lemma:flambda-wsigmap-per}
Let $\sigma \geq 1$. Then
\begin{itemize}
\item If $1 \leq p < \infty$ and $\lambda \in (d - \frac{d}{p},d)$, then $f_{\lambda} \in W^{\sigma-1,p}(\mathbb{T}^d)$.
\item If $2 \leq p < \infty$ and $\lambda > d - \frac{d}{p}$, then $f_{\lambda} \in W^{\sigma-1,p}(\mathbb{T}^d)$.
\end{itemize}
\end{lemma}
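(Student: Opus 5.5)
We prove Lemma \ref{lemma:flambda-wsigmap-per} by periodization, transferring Proposition \ref{prop:Flambda-inWsigmap} and Lemma \ref{lemma:Flambda} from $\mathbb{R}^d$ to $\mathbb{T}^d$. By \eqref{eq:flambdadef}, $\langle D\rangle^{\sigma-1} f_\lambda$ on $\mathbb{T}^d$ has Fourier coefficients $h(k)|k|^{-\lambda}$. These are exactly the samples at $k\in\mathbb{Z}^d$ of the Fourier transform of $\langle D\rangle^{\sigma-1}F_\lambda$. By the Poisson summation formula, in the sense of distributions on $\mathbb{T}^d$,
\begin{align*}
\langle D\rangle^{\sigma-1} f_\lambda(x) = \sum_{k\in\mathbb{Z}^d} \langle D\rangle^{\sigma-1}F_\lambda(x+k),
\end{align*}
which is \eqref{eq:flambdadef-1}. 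The task is therefore to show that this periodized sum belongs to $L^p(\mathbb{T}^d)$.

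\textbf{First item.} We split the sum into $k=0$ (together with the finitely many $k$ with $|x+k|\le 1$ for $x$ in the fundamental cube $[-1/2,1/2)^d$) and the tail. For the finitely many near terms, Lemma \ref{lemma:Flambda} gives $\langle D\rangle^{\sigma-1}F_\lambda(y)=\gamma(d,\lambda)|y|^{\lambda-d}+G_{\lambda,d}(y)$, with $G_{\lambda,d}$ smooth. The smooth part is bounded on compacts. The singular part is in $L^p_{\mathrm{loc}}$ exactly when $(d-\lambda)p<d$, i.e. $\lambda>d-\frac dp$. For the tail, the decay bound $|\langle D\rangle^{\sigma-1}F_\lambda(y)|\lesssim|y|^{-N}$ holds for every $N>d-\lambda$, so we may choose $N>d$. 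Then $\sum_{|k|\ge 2}|x+k|^{-N}$ converges uniformly for $x$ in the cube, giving a bounded (indeed continuous) function. Hence $\langle D\rangle^{\sigma-1}f_\lambda\in L^p(\mathbb{T}^d)$.

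A justification step is needed: the pointwise series must coincide with the distribution defined by the Fourier coefficients. We test against trigonometric polynomials. The series converges absolutely in $L^1(\mathbb{T}^d)$, by the same estimates with $p=1$, using $\lambda>0$. Unfolding the integral then gives $\int_{\mathbb{T}^d}\sum_k \langle D\rangle^{\sigma-1}F_\lambda(x+k)e^{-ikx}\,dx=\int_{\mathbb{R}^d}\langle D\rangle^{\sigma-1}F_\lambda(y)e^{-iky}\,dy=h(k)|k|^{-\lambda}$. The last equality requires care because $\langle D\rangle^{\sigma-1}F_\lambda$ is only an $L^1$ function whose Fourier transform is continuous away from $0$, while $h$ vanishes near $0$. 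In fact the function is in $L^1$ by the two bounds, so its Fourier transform is continuous everywhere and equals $h(\xi)|\xi|^{-\lambda}$ pointwise.

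\textbf{Second item.} For $2\le p<\infty$ we argue on the Fourier side instead. For $\lambda$ with $2\lambda>d$, Plancherel gives $\langle D\rangle^{\sigma-1}f_\lambda\in L^2$. To reach $L^p$ we use the Hausdorff--Young inequality on $\mathbb{T}^d$: $\|g\|_{L^p}\le\|\hat g\|_{\ell^{p'}}$ for $p\ge 2$. This requires $\sum_{k\neq0}|k|^{-\lambda p'}<\infty$, i.e. $\lambda>\frac d{p'}=d-\frac dp$. This handles every $\lambda>d-\frac dp$, including $\lambda\ge d$, where the real-space representation of the first item is unavailable.

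\textbf{Main obstacle.} The delicate point is the rigorous identification in the first item: that the Poisson-summed function equals $f_\lambda$ as an element of $\mathcal{D}'(\mathbb{T}^d)$. The second item is routine once Hausdorff--Young is invoked.
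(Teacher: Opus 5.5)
Your proposal is correct and follows essentially the same route as the paper. For the first item you periodize $\langle D\rangle^{\sigma-1}F_\lambda$ via Poisson summation and use Lemma \ref{lemma:Flambda}: the $k=0$ term carries the singularity $\gamma(d,\lambda)|x|^{\lambda-d}$ plus a smooth part, and the translates $k\neq 0$ sum to a bounded function. For the second item you apply Hausdorff--Young, using $\sum_{k\neq 0}|k|^{-\lambda p'}<\infty \Longleftrightarrow \lambda>d-\frac{d}{p}$. The only difference is that you verify the Fourier coefficients of the periodized sum directly, by unfolding the $L^1$ integral. The paper instead cites the $L^1(\mathbb{R}^d)$ Poisson summation theorem of Stein--Weiss (Theorem 2.4 of Chapter 7).
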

\begin{proof}
From Proposition \ref{prop:Flambda-inWsigmap}, $F_{\lambda} \in W^{\sigma-1,1}(\mathbb{R}^d)$ as long as $\lambda \in (0,d)$. Therefore, the Fourier expansion of 
\begin{align*}
\sum_{k \in \mathbb{Z}^d} \langle D \rangle^{\sigma - 1} F_{\lambda}(x+k)
\end{align*}
is given by the coefficients (see Theorem 2.4 from Chapter 7 in \cite{steinweiss})
\begin{align*}
\widehat{f_{\lambda}}(k) =  h(k) |k|^{-\lambda}, \quad k \in \mathbb{Z}^3.
\end{align*}	
Moreover, using Lemma \ref{lemma:Flambda} we obtain that, for any $x \in \mathbb{T}^d \setminus \{0\}$ with at least one component not equal to $1$,
\begin{align*}
\sum_{k \in \mathbb{Z}^d} \langle D \rangle^{\sigma - 1} F_{\lambda}(x+k) = \gamma(d,\lambda) |x|^{-n+\lambda} + G_{\lambda,d}(x) + \sum_{k \in \mathbb{Z}^d \setminus \{0\}} \langle D \rangle^{\sigma - 1} F_{\lambda}(x+k),
\end{align*}
where the last sum is a uniformly convergent sum of functions in $C^{\infty}(\mathbb{T}^d)$. Therefore, the statement $f_{\lambda} \in W^{\sigma-1,p}(\mathbb{T}^d)$ follows from
\begin{align*}
\gamma(d,\lambda) |x|^{-n+\lambda} + G_{\lambda,d}(x) \in L^p(\mathbb{T}^d),
\end{align*}
and the latter is true if $\lambda > d - \frac{d}{p}$. This proves the first statement.

Regarding the second statement, let $p>2$. By Hausdorff-Young inequality we have that
\begin{align*}
\|f_{\lambda}\|_{W^{\sigma-1,p}(\mathbb{T}^d)} \lesssim \|(g_{\lambda}(k) e^{ikx})_{k \in \mathbb{Z}^d} \|_{\l^{p'}(\mathbb{Z}^d)} = \left( \sum_{k \in \mathbb{Z}^d} |g_{\lambda}(k)|^{p'} \right)^{\frac{1}{p'}},
\end{align*}
where the last sum converges if $\lambda > d - \frac{d}{p}$. This concludes the proof.
\end{proof}

\begin{remark}
As we did in Section \eqref{sec:linearcex}, we can simplify the following computations from the objects $\langle D \rangle^{\sigma} \frac{e^{it|D|}}{|D|} F_{\lambda}$ to  $\langle D \rangle^{\sigma-1} e^{it|D|} F_{\lambda}$, given that $\frac{|D|}{\langle D \rangle}$ is a bounded operator in $L^q(\mathbb{T}^d)$ for any $q \geq 1$, see Theorem $3.6$ from Chapter 7 in \cite{steinweiss} and the remark done above for the analogous situation in $L^q(\mathbb{R}^d)$.
\end{remark}

\begin{proof}[Proof of Theorem \ref{thm:divergence-torus}]
Let $t \neq 0$, $|t| < 1$. Then, there exists some $\gamma > 0$ sufficiently small such that $|t| < 1 / (2\gamma + 1)$. From Theorem \ref{thm:flambda-tnotzero} we have that $\langle D \rangle^{\sigma-1} e^{it|D|} F_{\lambda} \in L^1(\mathbb{R}^d)$ if $\lambda > \frac{d-1}{2}$. Thus, applying again Theorem 2.4 from Chapter 7 in \cite{steinweiss} as we similarly did in Lemma \ref{lemma:flambda-wsigmap-per}, we have that the $L^1(\mathbb{T}^d)$ norm of
\begin{align}\label{eq:flambdadef-t}
\sum_{k \in \mathbb{Z}^d} \langle D \rangle^{\sigma-1} e^{it|D|} F_{\lambda}(x+k)
\end{align}
converges and its Fourier coefficients are given by
\begin{align}\label{eq:periodization-obj2}
\mathcal{F}\left[ \sum_{l \in \mathbb{Z}^d} \langle D \rangle^{\sigma-1} e^{it|D|} F_{\lambda}(x+l) \right](k) = \langle k \rangle^{\sigma-1} e^{it|k|} g_{\lambda}(k), \quad k \in \mathbb{Z}^d.
\end{align} 
Recall that we want to prove the divergence of
\begin{align*}
\|S(t)(0,f_{\lambda})\|^p_{W^{\sigma,p}(\mathbb{T}^d)} = \frac{1}{2} \int_{\mathbb{T}^d} \left| \langle D \rangle^{\sigma} \frac{e^{it|D|} - e^{-it|D|}}{|D|} f_{\lambda}(x)\right|^p dx,
\end{align*}
which holds true if we prove the divergence of 
\begin{align}\label{eq:finaldiv-0}
\int_{\mathbb{T}^d} \left| \langle D \rangle^{\sigma-1} \sin(t|D|) f_{\lambda}(x)\right|^p dx.
\end{align}
Therefore, if we show that
\begin{align}\label{eq:finaldiv-2}
\sum_{k \in \mathbb{Z}^d} \langle D \rangle^{\sigma-1} \sin(t|D|) F_{\lambda}(x+k)
\end{align}
is not in $L^p(\mathbb{T}^d)$, the proof would be concluded. 

For a.e. $x \in \mathbb{T}^d \setminus \{0\}$ with at least one component not equal to $1$, and such that $|x+l| \neq t$ and $-|x+l| \neq t$ for any $l \in \mathbb{Z}^d$, from Theorem \ref{thm:flambda-tnotzero} we have that
\begin{align}\label{eq:finaldiv-3}
\langle D &\rangle^{\sigma-1} \sin(t|D|) F_{\lambda}(x+k)\nonumber \\
&= \frac{J\left(\frac{d-1}{2}-\lambda\right)}{2i} |x+k|^{-\frac{d-1}{2}} \nonumber \\
&\qquad \times  \{ c_{+} [ e^{i\frac{\pi}{2}(\frac{d+1}{2} - \lambda)} (|x+k| + t)_+^{\lambda-\frac{d+1}{2}} + e^{-i\frac{\pi}{2}(\frac{d+1}{2} - \lambda)} (|x+k| + t)_-^{\lambda-\frac{d+1}{2}}] \nonumber \\
&\qquad + c_{-}  [ e^{i\frac{\pi}{2}(\frac{d+1}{2} - \lambda)}(-|x+k| + t)_+^{\lambda-\frac{d+1}{2}} + e^{-i\frac{\pi}{2}(\frac{d+1}{2} - \lambda)} (-|x+k| + t)_-^{\lambda-\frac{d+1}{2}} ]\nonumber  \\
&\qquad - c_{+} [ e^{i\frac{\pi}{2}(\frac{d+1}{2} - \lambda)} (|x+k| - t)_+^{\lambda-\frac{d+1}{2}} + e^{-i\frac{\pi}{2}(\frac{d+1}{2} - \lambda)} (|x+k| - t)_-^{\lambda-\frac{d+1}{2}}] \nonumber \\
&\qquad - c_{-}  [ e^{i\frac{\pi}{2}(\frac{d+1}{2} - \lambda)}(-|x+k| - t)_+^{\lambda-\frac{d+1}{2}} + e^{-i\frac{\pi}{2}(\frac{d+1}{2} - \lambda)} (-|x+k| - t)_-^{\lambda-\frac{d+1}{2}} ] \}
\nonumber \\
&\qquad + c_{\lambda}(x+k) + D_{\lambda}(x+k)
\end{align}
where $k \in \mathbb{Z}^d$, $D_{\lambda} \in C^{\infty}(\mathbb{R}^d)$, $c_{\pm}$ the constants from \eqref{eq:asympBessel}, and 
\begin{align*}
&|c_{\lambda}(x+k)| \lesssim |x+k|^{-\frac{d+1}{2}} \text{ for } \lambda \in \left(\frac{d-1}{2},d\right), \nonumber \\
&|c_{\lambda}(x+k)| \lesssim |x+k|^{-\frac{d-1}{2}} \text{ for } \lambda \in \left(\frac{d+1}{2},d\right).
\end{align*}
From the proof of Theorem \ref{thm:nonpropag-linear-rn}, we expect that the singularity around $x = 0$ will yield the divergence for the case $p \geq \frac{2d}{d-1}$, while the singularity around $|x| = \pm |t|$ will give the divergence for $p < \frac{2d}{d+1}$. For the first case, by the triangle inequality,
\begin{align}\label{eq:trineq-tndiv}
&\left\| \sum_{k \in \mathbb{Z}^d}  \langle D \rangle^{\sigma-1} \sin(t|D|) F_{\lambda}(x+k)  \right\|_{L^p(\mathbb{T}^d)} \nonumber \\
&\qquad  \geq \left\| \sum_{k \in \mathbb{Z}^d} \langle D \rangle^{\sigma-1} \sin(t|D|) F_{\lambda}(x+k)  \right\|_{L^p(B(0,\gamma |t|))} \nonumber \\
&\qquad \geq \left\|  \langle D \rangle^{\sigma-1} \sin(t|D|) F_{\lambda}(x)  \right\|_{L^p(B(0,\gamma |t|))} \nonumber \\
&\qquad \qquad - \left\| \sum_{k \in \mathbb{Z}^d \setminus \{0\}} \langle D \rangle^{\sigma-1} \sin(t|D|) F_{\lambda}(x+k)  \right\|_{L^p(B(0,\gamma |t|))}.
\end{align}
Given $x \in B(0,\gamma |t|)$ and $k \in \mathbb{Z}^d \setminus \{0\}$, if we use $|t| < 1/(2\gamma+1)$,
\begin{align*}
&\left| |x+k| \pm t\right| \geq |x+k| - |t| \geq 1 - (\gamma + 1) |t|, \nonumber \\
& |x+k| \geq |k| - |x| \geq 1 - \gamma |t| > |t|,
\end{align*}
which are uniform and strictly positive lower bounds. Thus, regarding the second norm in the last expression of \eqref{eq:trineq-tndiv}, we use that for any $k \in \mathbb{Z}^d$ with $|k| > 1$ and $x \in \mathbb{T}^d$
\begin{align*}
|x + k| \geq |k| - |x| \geq |k| - 1,
\end{align*}
so that, by the second point in Theorem \ref{thm:flambda-tnotzero}, for any $N > \max\{d - \lambda,d/p\}$
\begin{align*}
&\left( \int_{B(0,\gamma |t|)} \left| \sum_{k \in \mathbb{Z}^d \setminus \{0\}} \langle D \rangle^{\sigma-1} \sin(t|D|) F_{\lambda}(x+k) \right|^p dx \right)^{1/p} \nonumber \\
&\lesssim \left(  \frac{1}{|t|^{Np}} + \sum_{k \in \mathbb{Z}^d, |k| > 1} \int_{B(0,\gamma |t|)} \frac{dx}{|x + k|^{Np}} \right)^{1/p} \nonumber \\
&\lesssim \left( \frac{1}{|t|^{Np}} + \sum_{k \in \mathbb{Z}^d, |k| > 1} \frac{1}{(|k| - 1)^{Np}}  \right)^{1/p} \lesssim 1.
\end{align*}
Therefore, the same analysis of the proof for Theorem \ref{thm:nonpropag-linear-rn} for the integral
\begin{align*}
\left( \int_{B(0,\gamma |t|)} \left| \langle D \rangle^{\sigma-1} \sin(t|D|) F_{\lambda}(x)  \right|^p dx \right)^{1/p}
\end{align*}
concludes the proof for $p \geq \frac{2d}{d-1}$. 

For $p < \frac{2d}{d+1}$, consider $|t| < 1/2$, $t \neq 0$. Analogously, there exists some $\gamma_1 > 0$ sufficiently small such that $|t| < 1/(2 + 2\gamma_1)$. Given $x \in B(0, (1  + \gamma_1) |t|) \setminus B(0, (1 - \gamma_1)|t|)$ and $k \in \mathbb{Z}^d \setminus \{0\}$,
\begin{align*}
&\left| |x+k| - t\right| \geq |x+k| - |t| \geq 1 - (2 + \gamma_1)|t|, \nonumber \\
& |x+k| \geq |k| - |x| \geq 1 - (1 + \gamma_1)|t| > |t|,
\end{align*}
which are again uniform and strictly positive lower bounds. Thus, by an analogous analysis than the one for $p \geq \frac{2d}{d-1}$, we have that
\begin{align*}
&\left\| \sum_{k \in \mathbb{Z}^d}  \langle D \rangle^{\sigma-1} \sin(t|D|) F_{\lambda}(x+k)  \right\|_{L^p(\mathbb{T}^d)} \nonumber \\
&\qquad \geq \left\|  \langle D \rangle^{\sigma-1} \sin(t|D|) F_{\lambda}(x)  \right\|_{L^p(B(0,(1 + \gamma_1)|t|) \setminus B(0,(1 - \gamma_1)|t|))} \nonumber \\
&\qquad \qquad - \left\| \sum_{k \in \mathbb{Z}^d \setminus \{0\}} \langle D \rangle^{\sigma-1} \sin(t|D|) F_{\lambda}(x+k)  \right\|_{L^p(B(0,(1 + \gamma_1)|t|) \setminus B(0,(1 - \gamma_1)|t|))},
\end{align*}
where the second norm is finite, and the first norm diverges by the same analysis of the proof for Theorem \ref{thm:nonpropag-linear-rn}.
\end{proof}

\section{Proof of Theorem \ref{thm:nonpropag-nonlinear}}\label{sec:proof-nonlinearcex}
Fix $d = 3$. All we need is to prove an appropriate control of \eqref{eq:quantity-to-control}. First of all, we identify the Sobolev spaces $H^{r-1}$ to which our initial velocity belongs.

\begin{lemma}\label{lemma:Hlambda-Hr}
	Let $\lambda \in (0,3)$ and $r < \sigma + \lambda - \frac{3}{2}$. Then $S_{\lambda} \subset H^{r-1}(\mathbb{T}^3)$. In particular, if $\lambda \in (\frac{3}{2},3)$, then $S_{\lambda} \subset H^{\sigma-1}(\mathbb{T}^3)$.
	\end{lemma}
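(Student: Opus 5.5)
Since every element of $S_{\lambda}$ has the form $g = \phi + \delta f_{\lambda}$ with $\phi \in C^{\infty}(\mathbb{T}^3) \subset H^{r-1}(\mathbb{T}^3)$ for every $r$, and $H^{r-1}(\mathbb{T}^3)$ is a vector space, it suffices to show that $f_{\lambda} \in H^{r-1}(\mathbb{T}^3)$ whenever $r < \sigma + \lambda - \frac{3}{2}$. I would verify this directly on the Fourier side, using Plancherel on $\mathbb{T}^3$ and the explicit coefficients \eqref{eq:flambdadef}:
\begin{align*}
\|f_{\lambda}\|_{H^{r-1}(\mathbb{T}^3)}^2 = \sum_{k \in \mathbb{Z}^3} \langle k \rangle^{2(r-1)} |h(k)|^2 \langle k \rangle^{-2(\sigma-1)} |k|^{-2\lambda}.
\end{align*}

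The key observations are that $h(0) = 0$ (indeed $h$ vanishes on $B(0,1/2)$), so the $k=0$ term drops and there is no singularity from $|k|^{-\lambda}$. Moreover $0 \le h \le 1$, since $h = 1 - \psi_{1/4} \ast \mathrm{1}_{B(0,3/4)}$ and the convolution lies in $[0,1]$ when $\psi \geq 0$. Even without positivity, $h$ is bounded, and that is all the argument needs. Finally, for $k \neq 0$ we have $|k| \sim \langle k \rangle$. The sum is therefore bounded by
\begin{align*}
\sum_{k \in \mathbb{Z}^3 \setminus \{0\}} \langle k \rangle^{2(r - \sigma - \lambda)},
\end{align*}
which by comparison with $\int_{\mathbb{R}^3} \langle \xi \rangle^{2(r-\sigma-\lambda)} d\xi$ (or by counting $\sim 2^{3j}$ lattice points in dyadic shells) converges if and only if $2(\sigma + \lambda - r) > 3$. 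This is exactly $r < \sigma + \lambda - \frac{3}{2}$.

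For the second claim, take $\lambda \in (\frac{3}{2},3)$. Then $\sigma + \lambda - \frac{3}{2} > \sigma$, so $r = \sigma$ is admissible in the first part, and hence $S_{\lambda} \subset H^{\sigma-1}(\mathbb{T}^3)$.

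There is no real obstacle here. The only points needing care are the vanishing of $h$ near the origin, which removes the $k = 0$ term, and the boundedness of $h$. Note that the restriction $\lambda < 3$ plays no role in this lemma beyond matching the standing assumptions.
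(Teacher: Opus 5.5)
Your proof is correct and takes essentially the same approach as the paper. You reduce to $f_{\lambda}$, write $\|f_{\lambda}\|_{H^{r-1}(\mathbb{T}^3)}^2$ via Plancherel as the lattice sum of $\langle k\rangle^{2(r-\sigma)}|k|^{-2\lambda}$ over $k\neq 0$, and read off convergence exactly when $r<\sigma+\lambda-\frac{3}{2}$; your remarks that $h(0)=0$ and that $h$ is bounded (in fact $h(k)\in\{0,1\}$ for $k\in\mathbb{Z}^3$) just make explicit what the paper's one-line computation leaves implicit.
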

	\begin{proof}
	We only need to check when $f_{\lambda} \in H^{r-1}(\mathbb{T}^3)$. This follows from
	\begin{align*}
	\|f_{\lambda}\|_{H^{r-1}(\mathbb{T}^3)}^2 = \sum_{k \in \mathbb{Z}^3} h(k) \langle k \rangle^{2(r-\sigma)} |k|^{-2\lambda},
	\end{align*}
	which converges if and only if $r < \sigma + \lambda - \frac{3}{2}$.
	\end{proof}

	To obtain a suitable control of the Duhamel integral for the first case in Theorem \ref{thm:nonpropag-linear} ($p \geq 3$), we use Strichartz estimates. In order to adapt them to the periodic setting, we use the finite speed of propagation of the linear flow from the linearization of the Cauchy system \eqref{eq:cauchywave} given for $x \in \mathbb{R}^d$. Denote by $\tilde{S}(t)$ the linear evolution from equation \eqref{eq:cauchywave} for $x \in \mathbb{R}^d$. This will allow us to give a periodic version of the well-known estimates on $\mathbb{R}^3$ from \cite{GV95}.
\begin{proposition}[finite speed of propagation]\label{prop:finite-speed-prop}
Let $F \in W^{\sigma-1,p}(\mathbb{R}^d)$ for some $\sigma \geq 1$ and $p \in [1,\infty)$ such that
\begin{align*}
\supp(F) \subset \{x \in \mathbb{R}^d : |x - x_0| \leq R\}
\end{align*}
for some $R > 0$ and $x_0 \in \mathbb{R}^d$. Then, for any $t \geq 0$,
\begin{align*}
\supp(\tilde{S}(t)(0,F)) \subset \{x \in \mathbb{R}^d : |x - x_0| \leq R + t\}.
\end{align*}
\end{proposition}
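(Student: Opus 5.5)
The natural route is to avoid Fourier analysis altogether and use the kernel representation together with an energy argument. For the linear wave equation on $\mathbb{R}^d$ with data $(0,F)$, the solution $w(t)=\tilde S(t)(0,F)$ is the unique distributional solution of $\partial_{tt}w-\Delta w=0$, $w(0)=0$, $\partial_t w(0)=F$. We first reduce to smooth data. Take a standard mollifier $\rho_\varepsilon$ supported in $B(0,\varepsilon)$. Then $F_\varepsilon=\rho_\varepsilon\ast F$ is smooth and supported in $\{|x-x_0|\le R+\varepsilon\}$. Moreover $\tilde S(t)(0,F_\varepsilon)=\rho_\varepsilon\ast \tilde S(t)(0,F)$, because Fourier multipliers commute with convolution. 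As $\varepsilon\to0$ this converges to $\tilde S(t)(0,F)$ in $\mathcal{S}'(\mathbb{R}^d)$; the function $F\in W^{\sigma-1,p}\subset L^p$ is tempered and the multiplier $\sin(t|\xi|)/|\xi|$ is smooth enough on tempered distributions. Supports pass to the limit in the sense that if every $\tilde S(t)(0,F_\varepsilon)$ vanishes on an open set $U_\varepsilon$, and the $U_\varepsilon$ increase to $U$, then the limit vanishes on $U$. So it suffices to prove the statement for $F\in C^\infty_c$ with $R$ replaced by $R+\varepsilon$, and then let $\varepsilon\to0$.

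For smooth compactly supported $F$, the solution $w$ is smooth and is also the classical solution. We use the local energy identity on backward cones. Fix a point $(y,s)$ with $s>0$ and $|y-x_0|>R+s$. For $0\le \tau\le s$ define
\[
E(\tau)=\tfrac12\int_{B(y,s-\tau)}\big(|\partial_t w(\tau)|^2+|\nabla w(\tau)|^2\big)\,dx .
\]
Differentiating in $\tau$, substituting $\partial_{tt}w=\Delta w$, and integrating by parts gives
\[
E'(\tau)=\int_{\partial B}\Big(\partial_t w\,\partial_\nu w-\tfrac12|\partial_tw|^2-\tfrac12|\nabla w|^2\Big)\,dS\le 0,
\]
where the sign follows from Cauchy--Schwarz. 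The ball $B(y,s)$ does not meet $\{|x-x_0|\le R\}$, so $E(0)=0$. Hence $E\equiv0$, so $w$ is constant on the cone, and since $w(0)=0$ we get $w(y,s)=0$. This gives $\supp w(s)\subset\{|x-x_0|\le R+s\}$.

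The same result can be read off in $d=3$ from Kirchhoff's formula $w(t,x)=t\,\fint_{|z-x|=t}F(z)\,dS(z)$, which even gives Huygens' principle. The energy argument, however, works in every dimension $d$.

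The only delicate step is the approximation from general $F\in W^{\sigma-1,p}$ to smooth data. Specifically, one must identify $\tilde S(t)(0,F)$, which is defined by the Fourier multiplier, with the limit of the classical solutions, and check that the support property survives the distributional limit. Both follow from the commutation of the multiplier with convolution and from testing against $\phi\in C_c^\infty$ supported outside the enlarged ball. Everything else is routine.
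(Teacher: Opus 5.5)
Your proof is correct and takes the same route as the paper: mollify $F$, use that convolution with $\rho_\varepsilon$ commutes with the multiplier $\frac{\sin(t|D|)}{|D|}$, and pass the support property to the limit by testing against $\tilde{\varphi}\in C^\infty_c$ supported outside the enlarged ball. The only difference is that you prove the smooth case yourself with the backward-cone energy identity, whereas the paper cites it (Proposition 1.10 in \cite{Tzvetkov2019-notes}), so your version is self-contained.
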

\begin{proof}
We assume we know the result for $F \in C^1(\mathbb{R}^d)$, see Proposition 1.10 in \cite{Tzvetkov2019-notes}. Following this strategy, let $F \in W^{\sigma-1,p}(\mathbb{R}^d)$ and $\rho_{\varepsilon}(x)  = \varepsilon^{-n} \rho(x/\varepsilon)$, $\rho \in C^{\infty}_c(\mathbb{R}^d)$, $0 \leq \rho \leq 1$, and $\int_{\mathbb{R}^d} \rho(x) dx = 1$. Observe that
\begin{align*}
\rho_{\varepsilon} \ast \tilde{S}(t)(0,F) = \tilde{S}(t)(0,\rho_{\varepsilon} \ast F).
\end{align*}
Let $\tilde{\varphi} \in C^{\infty}_c(|x| > t + R)$. Then $\tilde{S}(t)(0,\rho_{\varepsilon} \ast F)(\tilde{\varphi}) = 0$ for $\varepsilon$ small enough. Also, $\rho_{\varepsilon} \ast \tilde{S}(t)(0,F)(\tilde{\varphi})$ converges to $\tilde{S}(t)(0,F)(\tilde{\varphi})$ as $\varepsilon \rightarrow 0^+$.
\end{proof}
Given $r \in \mathbb{R}$, $p,q \in [1,\infty]$, we denote 
\begin{align*}
L^q_T W^{r,p}(\mathbb{T}^3) = L^q([0,T] , W^{r,p}(\mathbb{T}^3)),
\end{align*}
being the external Lebesgue space given for the time variable, and the internal space for the space variable. 

\begin{proposition}\label{prop:st}
		Let $\mu \in \mathbb{R}$, $T \in (0,1)$ and $2 \leq p,q \leq \infty$ be such that
		\begin{align}\label{eq:adm-st}
			\frac{2}{q} \leq 1 - \frac{2}{p}, \quad (q,p) \neq (2,\infty), \quad \frac{3}{2} - \mu \leq \frac{3}{p} + \frac{1}{q}.
		\end{align} 
		We say that $(q,p)$ are $\mu$-admissible if they satisfy these relations. Then
		\begin{align*}
			\left\|\frac{\sin(t|D|)}{|D|} f\right\|_{L^q_TL^p(\mathbb{T}^3)} \lesssim \|g\|_{H^{\mu-1}(\mathbb{T}^3)}. 
		\end{align*}
	\end{proposition}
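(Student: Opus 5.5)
The plan is to deduce the periodic estimate from the Euclidean Strichartz estimates of \cite{GV95}, using a localization in space together with the finite speed of propagation from Proposition \ref{prop:finite-speed-prop}. The restriction $T\in(0,1)$ is what makes this work: on that time interval each spatially localized piece of the data moves only a bounded distance.

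\textbf{Step 1 (localization).} Fix a smooth partition of unity on $\mathbb{R}^3$ made of translates $\chi(\cdot-k)$, $k\in\mathbb{Z}^3$, of a single bump $\chi\in C^\infty_c(B(0,1))$ with $\sum_k\chi(\cdot-k)=1$. Write the periodic datum $g$ as a $1$-periodic distribution on $\mathbb{R}^3$ and set $G=\chi g$, a compactly supported distribution on $\mathbb{R}^3$. By periodicity and linearity, the periodic evolution is the periodization of the Euclidean one:
\begin{align*}
\frac{\sin(t|D|)}{|D|}g(x)=\sum_{k\in\mathbb{Z}^3}\tilde S(t)(0,G)(x+k).
\end{align*}
This identity can be checked on the Fourier side, via Poisson summation exactly as in Section \ref{sec:localization}, or by uniqueness for the linear wave equation.

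By Proposition \ref{prop:finite-speed-prop}, $\tilde S(t)(0,G)$ is supported in $B(0,1+t)\subset B(0,2)$ for $t\in[0,T]$. Hence, for $x$ in the fundamental cell, only a bounded number of the translates are nonzero, and
\begin{align*}
\Big\|\frac{\sin(t|D|)}{|D|}g\Big\|_{L^q_TL^p(\mathbb{T}^3)}\lesssim \|\tilde S(t)(0,G)\|_{L^q_TL^p(\mathbb{R}^3)}.
\end{align*}

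\textbf{Step 2 (Euclidean Strichartz).} Apply the Ginibre--Velo estimate on $\mathbb{R}^3$ for the pair $(q,p)$ under \eqref{eq:adm-st}. The condition $\frac{2}{q}\le 1-\frac{2}{p}$ is the wave admissibility condition. The inequality $\frac32-\mu\le \frac3p+\frac1q$ permits a regularity loss, which is harmless here because we work on a bounded time interval with inhomogeneous norms. This gives
\begin{align*}
\|\tilde S(t)(0,G)\|_{L^q_TL^p(\mathbb{R}^3)}\lesssim \|G\|_{H^{\mu-1}(\mathbb{R}^3)}.
\end{align*}
Here we use the inhomogeneous version: the low frequencies of $\frac{\sin(t|D|)}{|D|}$ are bounded on the finite interval $[0,T]$, with $T<1$, and the high frequencies are treated by scaling. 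When there is slack in the $\mu$-inequality, we first use Sobolev embedding in $x$ to pass to a sharp admissible exponent.

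\textbf{Step 3 (return to the torus).} It remains to show $\|\chi g\|_{H^{\mu-1}(\mathbb{R}^3)}\lesssim\|g\|_{H^{\mu-1}(\mathbb{T}^3)}$. On the Fourier side,
\begin{align*}
\widehat{\chi g}(\xi)=\sum_k \hat g(k)\hat\chi(\xi-k),
\end{align*}
and the rapid decay of $\hat\chi$ together with Peetre's inequality $\langle\xi\rangle^{s}\lesssim\langle k\rangle^{s}\langle\xi-k\rangle^{|s|}$ reduces this to Schur's test.

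\textbf{Main obstacle.} I expect Step 2 to be the delicate point. The exponents in \eqref{eq:adm-st} must be matched to the exact form of the estimate in \cite{GV95}, including the endpoint exclusion $(q,p)\ne(2,\infty)$. In particular, the conversion from homogeneous $\dot H$ norms to the inhomogeneous $H^{\mu-1}$ norm needs a low/high frequency splitting, since the operator $1/|D|$ is singular at zero frequency. On the torus the zero mode is harmless, because $\frac{\sin(t|D|)}{|D|}$ acts on it as multiplication by $t$. I would therefore treat the zero Fourier mode separately, and apply the localization argument only to the mean-zero part of $g$.
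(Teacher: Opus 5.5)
Your proposal is correct and follows essentially the same route as the paper: localize the datum, use finite speed of propagation on $[0,T]$ with $T<1$ to reduce to the Euclidean Strichartz estimates of \cite{GV95}, then bound the cut-off datum in $H^{\mu-1}$. It is in fact more careful than the printed proof on three points: the bounded overlap of the translates, the passage from homogeneous to inhomogeneous norms, and the bound $\|\chi g\|_{H^{\mu-1}(\mathbb{R}^3)}\lesssim\|g\|_{H^{\mu-1}(\mathbb{T}^3)}$, which the paper justifies only through $\|h_j\|_{L^\infty}$, a step that is not valid for $\mu\neq 1$.

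The one caveat, shared with the paper and really a defect of the statement, is $p=\infty$. For $(q,p)=(\infty,\infty)$ and $\mu=\frac32$ the claim reads $\sup_{t\le T}\|\frac{\sin(t|D|)}{|D|}g\|_{L^\infty}\lesssim\|g\|_{H^{1/2}}$, which fails: take $\widehat g$ supported where $|\sin(t_0|k|)|\geq c$ and use $H^{3/2}(\mathbb{T}^3)\not\hookrightarrow L^\infty$. So the appeal to \cite{GV95} in your Step 2 is only legitimate for $p<\infty$, which is all that Theorem \ref{thm:nonpropag-nonlinear} requires.
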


\begin{proof}
Let $\delta \in (0,1)$, and $(h_j)_{j \leq N_1}$ be a partition of unity in $\mathbb{T}^3$ such that 
\begin{align*}
\supp (h_j) \subset B(x_j,\delta), \quad j  = 1,\dots,N_{\delta},
\end{align*}
for certain $x_j \in \mathbb{T}^3$, $N_{\delta} \in \mathbb{N}$ and $h_j \in C^{\infty}(\mathbb{T}^3)$ for any $j \in \{1,\dots,N_{\delta}\}$. Moreover, for any $x \in \mathbb{T}^3$
\begin{align*}
\sum_{j = 1}^{N_{\delta}} h_j(x) = 1.
\end{align*} 
Then
\begin{align}
&\left\| \frac{\sin(t|D|)}{|D|} f \right\|_{L^q_TL^p(\mathbb{T}^3)} = \left\| \frac{\sin(t|D|)}{|D|} \left( \sum_{j=1}^{N_{\delta}} h_j \right)f \right\|_{L^q_TL^p(\mathbb{T}^3)} \nonumber \\
&\leq \sum_{j=1}^{N_{\delta}}  \left\| \frac{\sin(t|D|)}{|D|}\left(h_j f\right)\right\|_{L^q_TL^p(\mathbb{T}^3)}. \label{eq:loc-gral-1}
\end{align}
Recall that $t \in [0,T]$ for $T < 1$. Regarding each term in the last sum in \eqref{eq:loc-gral-1}, if we take $\delta \in (0,1)$  such that $\delta + t \leq 1$, then we can apply Strichartz estimates on $\mathbb{R}^3$ for any $\mu$-admissible pair $(q,p)$ thanks to the finite speed of propagation property from Proposition \ref{prop:finite-speed-prop}:
\begin{align*}
&\left\| \frac{\sin(t|D|)}{|D|} (h_jf) \right\|_{L^q_TL^p(\mathbb{T}^3)} = \left\| \frac{\sin(t|D|)}{|D|} (h_jf) \right\|_{L^q_TL^p(\mathbb{R}^3)} \lesssim \|h_j f\|_{H^{\mu-1}(\mathbb{R}^3)} \nonumber \\
&= \|h_j f\|_{H^{\mu-1}(\mathbb{T}^3)} \lesssim  \|h_j\|_{L^{\infty}(\mathbb{T}^3)} \|f\|_{H^{\mu-1}(\mathbb{T})^3)}  \lesssim_{h_j} \|f\|_{H^{\mu-1}(\mathbb{T})^3)} 
\end{align*}
This concludes the proof.
\end{proof}
	
	\begin{lemma}\label{lemma:infcontrol-duhamel-st}
	Let $T \in (0,1)$, $p \geq 2$, $\sigma > \frac{3}{p} - \frac{1}{2}$. Then
		\begin{align}\label{eq:control-duh}
			\left\| \int_0^t \frac{\sin((t-\tau)|D|)}{|D|} u^3(\tau) d\tau \right\|_{L^{\infty}_T W^{\sigma,p}(\mathbb{T}^3)} \lesssim T \sup_{|t| \leq T} \| u(t) \|_{L^{\infty}(\mathbb{T}^3)}^2 \|u(t)\|_{H^{\sigma + \frac{1}{2} - \frac{3}{p}}(\mathbb{T}^3)}.
		\end{align}
		Moreover
		\begin{align}\label{eq:control-duh-1}
		\left\| \int_0^t \frac{\sin((t-\tau)|D|)}{|D|} F(\tau) d\tau \right\|_{L^{\infty}_T W^{\sigma,p}(\mathbb{T}^3)} \lesssim  \|F\|_{L^1_T H^{\sigma + \frac{1}{2} - \frac{3}{p}}(\mathbb{T}^3)}.
		\end{align}
	\end{lemma}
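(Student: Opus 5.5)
We would prove \eqref{eq:control-duh-1} first; \eqref{eq:control-duh} will then follow from it together with a product estimate.

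\textbf{Step 1: the estimate \eqref{eq:control-duh-1}.} Fix $t \in [0,T]$. By Minkowski's inequality in $\tau$,
\begin{align*}
\left\| \int_0^t \frac{\sin((t-\tau)|D|)}{|D|} F(\tau)\, d\tau \right\|_{W^{\sigma,p}(\mathbb{T}^3)} \leq \int_0^T \left\| \langle D \rangle^{\sigma}\frac{\sin((t-\tau)|D|)}{|D|} F(\tau) \right\|_{L^p(\mathbb{T}^3)} d\tau .
\end{align*}
So it suffices to prove a fixed-time bound, uniform in $|s| \leq T < 1$:
\begin{align*}
\left\| \frac{\sin(s|D|)}{|D|} G \right\|_{L^p(\mathbb{T}^3)} \lesssim \|G\|_{H^{\frac12-\frac3p}(\mathbb{T}^3)} .
\end{align*}
We apply it with $G = \langle D \rangle^{\sigma} F(\tau)$. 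This fixed-time bound is the endpoint $q = \infty$ of Proposition \ref{prop:st}, with $\mu = \frac32 - \frac3p$. The pair $(\infty,p)$ is $\mu$-admissible, since $0 \leq 1 - \frac2p$ for $p \geq 2$ and $\frac32 - \mu = \frac3p$.

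One could instead argue directly: the Sobolev embedding $H^{\frac32-\frac3p} \hookrightarrow L^p$ on $\mathbb{T}^3$ gives the bound, together with the fact that $\frac{\sin(s|D|)}{|D|}\langle D \rangle$ is bounded on $H^r$ uniformly in $|s| \leq 1$. Either route yields
\begin{align*}
\|\cdot\|_{W^{\sigma,p}} \lesssim \|F(\tau)\|_{H^{\sigma+\frac12-\frac3p}}
\end{align*}
after integrating in $\tau$. We then take the supremum over $t \in [0,T]$.

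\textbf{Step 2: the estimate \eqref{eq:control-duh}.} We apply \eqref{eq:control-duh-1} with $F = u^3$ and bound the $L^1_T$ norm by $T \sup_{|t|\leq T}$. This reduces the claim to
\begin{align*}
\|u^3\|_{H^{s}(\mathbb{T}^3)} \lesssim \|u\|_{L^\infty}^2 \|u\|_{H^s}, \qquad s = \sigma + \tfrac12 - \tfrac3p > 0 .
\end{align*}
Here $s > 0$ is exactly the hypothesis $\sigma > \frac3p - \frac12$. This is the standard fractional Leibniz (Kato--Ponce) estimate on the torus, applied twice:
\begin{align*}
\|fg\|_{H^s} \lesssim \|f\|_{L^\infty}\|g\|_{H^s} + \|f\|_{H^s}\|g\|_{L^\infty} .
\end{align*}
First we take $f = u$ and $g = u^2$. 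Then we apply the estimate again to $\|u^2\|_{H^s}$ and use $\|u^2\|_{L^\infty} \leq \|u\|_{L^\infty}^2$.

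\textbf{Main obstacle.} The main point of care is the fractional Leibniz rule in the periodic setting for non-integer $s$. We would justify it either via a paraproduct decomposition with Littlewood--Paley projections on $\mathbb{T}^3$, or by transference from $\mathbb{R}^3$ using a partition of unity, as in the proof of Proposition \ref{prop:st}. A second point is the uniformity in $s$ of the multiplier $\frac{\sin(s|D|)}{|D|}\langle D\rangle$. This is immediate: the symbol is bounded by $\max(|s|,1)\cdot\sqrt{2}$ on the nonzero frequencies, and by $|s|$ at the zero mode.
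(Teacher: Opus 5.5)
Your proposal is correct and follows essentially the same route as the paper. You use Minkowski in $\tau$, then Proposition \ref{prop:st} at the endpoint $q=\infty$ with the sharp value $\mu=\frac32-\frac3p$ (applied after commuting $\langle D\rangle^{\sigma}$), and then the fractional Leibniz rule for $F=u^3$, which needs $\sigma+\frac12-\frac3p>0$. Your alternative via $H^{\frac32-\frac3p}(\mathbb{T}^3)\hookrightarrow L^p(\mathbb{T}^3)$ and the uniform $H^r\to H^{r+1}$ bound for $\frac{\sin(s|D|)}{|D|}$ is also valid, and it shows that no dispersion or finite speed of propagation is needed when $q=\infty$; however, both that route and the $(q,p)=(\infty,\infty)$ case of Proposition \ref{prop:st} require $p<\infty$, since $H^{3/2}(\mathbb{T}^3)\not\hookrightarrow L^{\infty}(\mathbb{T}^3)$, and $2\le p<\infty$ is the only range the paper actually uses.
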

	\begin{proof}
	Let $(q,p)$ be a $\mu$-admissible from \eqref{eq:adm-st}). Then, by the triangle and Minkowski inequalities,
	\begin{align*}
			&\left\| \int_0^t \frac{\sin((t-\tau)|D|)}{|D|} F(\tau) d\tau \right\|_{L^q_T W^{\sigma,p}(\mathbb{T}^3)}  \nonumber\\
			&\leq \left\| \left\| \left\|\frac{\sin((t-\tau)|D|)}{|D|} F(\tau)\right\|_{W^{\sigma,p}(\mathbb{T}^3)} \right\|_{L^1_{\tau}[0,T]} \right\|_{L^q_t[0,T]}  \nonumber\\
			&\leq \left\|  \left\|\frac{\sin((t-\tau)|D|)}{|D|} F(\tau)\right\|_{L^q_t([0,T],W^{\sigma,p}(\mathbb{T}^3))}  \right\|_{L^1_{\tau}[0,T]} \nonumber\\
			&\lesssim \left\| \left\| F(\tau) \right\|_{H^{\mu+\sigma-1}(\mathbb{R}^3)} \right\|_{L^1_{\tau}[0,T]} = \|F\|_{L^1_T H^{\sigma + \mu - 1}(\mathbb{T}^3)}
		\end{align*}
		Considering the sharp case in the last inequality of \eqref{eq:adm-st},
		\begin{align*}
		\mu = \frac{3}{2} - \frac{3}{p} - \frac{1}{q} \Longrightarrow \mu+\sigma-1 = \sigma + \frac{1}{2} - \frac{3}{p} - \frac{1}{q}.
		\end{align*}
		Take $q = \infty$ to obtain \eqref{eq:control-duh-1}. Furthermore, if we consider $\sigma + \frac{1}{2} - \frac{3}{p} > 0$ and $F = u^3$ we can apply Leibniz fractional rule \cite{grafakos_kp} to obtain
		\begin{align*}
		\|F\|_{L^1_T H^{\sigma + \frac{1}{2} - \frac{3}{p}}(\mathbb{T}^3)} \leq T \sup_{|t| \leq T} \|u(t)\|^2_{L^{\infty}(\mathbb{T}^3)} \|u(t)\|_{H^{\sigma + \frac{1}{2} - \frac{3}{p}}(\mathbb{T}^3)}.
		\end{align*}
	\end{proof}

	\begin{proof}[Proof of Theorem \ref{thm:nonpropag-nonlinear}]
	Recall that $p \geq 3$, $\lambda \in (3-\frac{3}{p},3)$ and $\sigma > 1$, and notice that under these conditions we cannot have $1 - \lambda \in -\mathbb{N}$.
	
	The density of $S_{\lambda}$ in $W^{\sigma - 1,p}(\mathbb{T}^3)$ follows from the density of $C^{\infty}(\mathbb{T}^3)$. Furthermore, since by Theorem \ref{thm:nonpropag-linear} we have
	\begin{align*}
	\frac{\sin(t|D|)}{|D|} f_{\lambda} \notin W^{\sigma,p}(\mathbb{T}^3)
	\end{align*}
	for any $p \geq 3$, $\sigma \geq 1$ and $t \in (-1,1) \setminus \{0\}$, then we only need to prove
	\begin{align}\label{eq:mainproof-2}
	\left\| \int_0^t \frac{\sin((t-\tau)|D|)}{|D|} \right. & \left. u^3(\tau) d\tau \right\|_{L^{\infty}_T W^{\sigma,p}(\mathbb{T}^3)} < \infty
	\end{align}
	for $T \in (0,1)$. To be more precise, once we prove \eqref{eq:mainproof-2}, then we would have shown that, for any $t \in (-1,1) \setminus \{0\}$,
	\begin{align*}
	u(\cdot,t) \notin W^{\sigma,p}(\mathbb{T}^3).
	\end{align*}
	Our choices of $\lambda$, $p$ and $\sigma$ let us bound the Duhamel integral in two steps:
	
	\begin{itemize}
	\item From Lemma \ref{lemma:Hlambda-Hr}, $S_{\lambda} \subset H^{(\sigma  + \lambda - \frac{3}{2}-\varepsilon)-1}(\mathbb{T}^3)$ for any $\varepsilon > 0$. Observe that, with our choice of $\lambda$ and $p$,
	\begin{align*}
	\sigma + \lambda - \frac{3}{2} > \sigma + 3 \left( \frac{1}{2} - \frac{1}{p}\right) \geq \sigma + \frac{1}{2}.
	\end{align*} 
	Therefore $\sigma \geq \frac{1}{2}$ suffices to have existence of a unique global solution $u$ of \eqref{eq:cauchywave} in the space $C(\mathbb{R},H^r(\mathbb{T}^3))$ for any $r \in [1,\sigma + \lambda - \frac{3}{2})$ (see \cite{Tzvetkov2019-notes}). Namely, $u \in C(\mathbb{R},H^{\sigma + \frac{1}{2}}(\mathbb{T}^3))$.
	
	\item Let $T \in (0,1)$. By Lemma \ref{lemma:infcontrol-duhamel-st} we have that
	\begin{align}\label{eq:mainproof-1}
	\left\| \int_0^t \frac{\sin((t-\tau)|D|)}{|D|} \right. & \left. u^3(\tau) d\tau \right\|_{L^{\infty}_T W^{\sigma,p}(\mathbb{T}^3)} \nonumber \\
	&\lesssim T  \| u \|_{L^{\infty}_TL^{\infty}(\mathbb{T}^3)}^2 \|u\|_{L^{\infty}_T H^{\sigma  + \frac{1}{2} - \frac{3}{p}}(\mathbb{T}^3)},
	\end{align}
	where we used $\sigma > 1 \geq \frac{3}{p} - \frac{1}{2}$ for any $p \geq 3$ (this still holds for any $\sigma > \frac{1}{2}$). Notice that, from the previous point,
	\begin{align*}
	\|u\|_{L^{\infty}_T H^{\sigma  + \frac{1}{2} - \frac{3}{p}}(\mathbb{T}^3)} \leq \|u\|_{L^{\infty}_T H^{\sigma  + \frac{1}{2}}(\mathbb{T}^3)} < \infty.
	\end{align*}			
	Furthermore, given $\sigma > 1$ and the Sobolev embedding of $H^{\frac{3}{2} + \varepsilon}(\mathbb{T}^3)$ in $L^{\infty}(\mathbb{T}^3)$ for any $\varepsilon > 0$, we have that
	\begin{align*}
	\| u \|_{L^{\infty}_TL^{\infty}(\mathbb{T}^3)} \lesssim \| u \|_{L^{\infty}_T H^{\sigma + \frac{1}{2}}(\mathbb{T}^3)} < \infty.
	\end{align*}
	\end{itemize}
	\end{proof}
	
	\begin{remark}
	Consider the following estimates from \cite{SSS91}, \cite{Peral1980} (see also chapter $9$ of \cite{steinha}), suitably adapted to the periodic setting following the strategy from Proposition \ref{prop:st}:
	\begin{align}\label{eq:estimates_linear_torus}
		&\left\| \frac{\sin(t|D|)}{|D|} f \right\|_{W^{\alpha,p}(\mathbb{T}^d)} \lesssim \| f \|_{L^p(\mathbb{T}^d)}, \quad t \in (0,1),
	\end{align}
	with
	\begin{align*}
	&\left| \frac{1}{p} - \frac{1}{2} \right| \leq \frac{1}{d-1}, \quad \alpha \leq 1 - (d-1)\left| \frac{1}{p} - \frac{1}{2}\right|, \quad d \geq 2.
	\end{align*}
	These estimates also allow us to control \eqref{eq:quantity-to-control} when $p \geq 3$. This comes from the following inequality, given $T \in (0,1)$:
	\begin{align*}
			\left\| \int_0^t \frac{\sin((t-\tau)|D|)}{|D|} u^3(\tau) d\tau \right\|_{L^{\infty}_T W^{\sigma,p}(\mathbb{T}^3)} \lesssim T \| u \|_{L^{\infty}_T L^{\infty}(\mathbb{T}^3)}^2 \|u\|_{L^{\infty}_T H^{\sigma + \frac{3}{2} - \frac{5}{p}}(\mathbb{T}^3)}.
		\end{align*}	
	
	Indeed, the integrand in $\tau$ of the Duhamel integral \eqref{eq:quantity-to-control}, given by $v(x,t,\tau) = \frac{\sin((t-\tau)|D|)}{|D|} u^3(x,\tau)$, solves the problem
	\begin{align*}
    \left\{
			\begin{array}{l}
				\partial_{tt} v - \Delta v = 0, \quad x \in \mathbb{T}^3, \quad
				\\
				v(x,\tau,\tau) = 0,  \ \ \partial_t v(x,\tau,\tau) = u^3(x,\tau).
			\end{array}
			\right.
	\end{align*}
	Therefore, from \eqref{eq:estimates_linear_torus},
	\begin{align*}
	\|v(\cdot,t,\tau)\|_{W^{\sigma,p}(\mathbb{T}^3)} \lesssim \|u^3(\cdot,\tau)\|_{W^{\sigma + \left(1 - \frac{2}{p}\right) - 1,p}}.
	\end{align*}
	Thus, by the fractional Leibniz rule,
	\begin{align}\label{eq:notmainproof-1}
	\left\| \int_0^t \frac{\sin((t-\tau)|D|)}{|D|} \right. & \left. u^3(\tau) d\tau \right\|_{L^{\infty}_T W^{\sigma,p}(\mathbb{T}^3)} \lesssim \sup_{|t| \leq T} \int_0^T \| u^3(\cdot,\tau)\|_{W^{\sigma - \frac{2}{p},p}(\mathbb{T}^3)}  d\tau \nonumber \\
	&\leq T \| u \|^2_{L^{\infty}_T L^{\infty}(\mathbb{T}^3)}\| u\|_{L^{\infty}_T W^{\sigma - \frac{2}{p},p}(\mathbb{T}^3)}.
	\end{align}
	Observe that 
\begin{itemize}
\item we have the Sobolev embedding $H^{\sigma + \frac{3}{2} - \frac{5}{p}}(\mathbb{T}^3) \hookrightarrow W^{\sigma - \frac{2}{p},p}(\mathbb{T}^3)$,
\item and $S_{\lambda} \subset H^{(\sigma + \frac{3}{2} - \frac{5}{p}) - 1}(\mathbb{T}^3)$ as long as $\lambda > 3 - \frac{5}{p}$ by Lemma \ref{lemma:Hlambda-Hr}.
\end{itemize}	
Therefore, given $p$, $\lambda$ and $\sigma$ as in Theorem \ref{thm:nonpropag-nonlinear}, in \eqref{eq:notmainproof-1} we can bound as
\begin{align*}
\left\| \int_0^t \frac{\sin((t-\tau)|D|)}{|D|} \right. & \left. u^3(\tau) d\tau \right\|_{L^{\infty}_T W^{\sigma,p}(\mathbb{T}^3)} \nonumber \\
	&\lesssim T \| u \|^2_{L^{\infty}_T H^{\sigma + \frac{1}{2}}(\mathbb{T}^3)}\| u\|_{L^{\infty}_T H^{\sigma + \lambda - \frac{3}{2} - \varepsilon}(\mathbb{T}^3)}
\end{align*}
for some $\varepsilon > 0$. Thus, we can proceed as in the proof of Theorem \ref{thm:nonpropag-nonlinear} to get the same result. The reason of having used Strichartz estimates instead of this procedure is that, by using estimates \eqref{eq:estimates_linear_torus}, we eventually require an $L^2$-based Sobolev regularity $\sigma + \frac{3}{2} - \frac{5}{p}$ to bound the left-hand side of \eqref{eq:notmainproof-1}, which for any $p \geq 2$ is larger than the analogous Sobolev regularity $\sigma + \frac{1}{2} - \frac{3}{p}$ obtained with Strichartz estimates (see Lemma \ref{lemma:infcontrol-duhamel-st}).
	\end{remark}
	
	\appendix
	
		\section{Appendix: Bessel functions}\label{app:bessel}
We define Bessel functions in the following way. See \cite{watson} or Chapter $4$ of \cite{steinweiss} for further detail. 
	\begin{definition}\label{lemma:bessel_kreal}
		Let $k > -1/2$ and $t > 0$. Then
		\begin{align}\label{eq:bessel2}
			J_k(t) = \frac{(t/2)^k}{\Gamma(2k+1)\Gamma(1/2)} \int_{-1}^1 e^{its} (1-s^2)^{(2k-1)/2}ds.
		\end{align}
	\end{definition}
	
Moreover, for $t$ large we have the following asymptotic behaviour:
		\begin{align}\label{eq:asympBessel}
			J_k(t) = t^{-1/2} e^{it} \sum_{j=0}^N a_j t^{-j} + t^{-1/2} e^{-it} \sum_{j=0}^N b_j t^{-j} + R_k(t)
		\end{align}
for suitable $(a_j)_{j \geq 0}, (b_j)_{j \geq 0} \subset \mathbb{C}$,	for any $N \in \mathbb{N} \cup \{0\}$ and $R_k(t) = \mathcal{O}(t^{-N-\frac{3}{2}})$, where $\mathcal{O}$ is given for $t \rightarrow \infty$. 
\\

We present the proof of Lemma \ref{lemma:ftransformradial_bessel}, which justifies the application of Bessel functions in this work. 
	\begin{proof}[Proof of Lemma \ref{lemma:ftransformradial_bessel}]
	The case $f \in L^1(\mathbb{R}^d)$ is proved in page 155 of \cite{steinweiss}. To extend this result to any $f$ in $L^p(\mathbb{R}^d)$, $1 < p < \infty$, it suffices to use a density argument with radial mollifiers. Indeed, let $\phi \in C^{\infty}_c(\mathbb{R}^d)$ be radial and denote $\phi_{\varepsilon} = \varepsilon^{-n}\phi(\cdot/\varepsilon)$ for any $\varepsilon > 0$, so that $f_{\varepsilon} = \phi_{\varepsilon} \ast f \rightarrow f$ as $\varepsilon \rightarrow 0^+$ in $L^p(\mathbb{R}^d)$. In particular, $f_{\varepsilon}(x) \rightarrow f(x)$ for a.e. $x \in \mathbb{R}^d$ up to a subsequence. Observe that $f_{\varepsilon}$ is radial because $f$ and $\phi_{\varepsilon}$ are radial, so we can write $f_{\varepsilon}(x) = f_{\varepsilon,0}(|x|)$ for some $f_{\varepsilon,0}$, and apply the first part of the proof to each $\mathcal{F}[f_{\varepsilon}]$. Thus, $f_{\varepsilon,0}(|x|) \rightarrow f_0(|x|)$ for a.e. $x \in \mathbb{R}^d$ and $\varepsilon \rightarrow 0^+$, up to a subsequence.
	
Thanks to the dominated convergence theorem, for any $\varphi \in \mathcal{S}(\mathbb{R}^d)$ we have that
	\begin{align*}
	\int_{\mathbb{R}^d} f_{\varepsilon}(x) \varphi(x) dx \rightarrow \int_{\mathbb{R}^d} f(x) \varphi(x) dx \text{ as } \varepsilon \rightarrow 0^+.
	\end{align*}
	Moreover, 
	\begin{align*}
	&\int_{\mathbb{R}^d} f_{\varepsilon}(x) \varphi(x) dx = \int_{\mathbb{R}^d} \mathcal{F}[f_{\varepsilon}](\xi) \mathcal{F}[\varphi](\xi) d\xi \nonumber \\
	& \int_{\mathbb{R}^d} \left( c(d) |\xi|^{-\frac{d-2}{2}} \int_0^{\infty} f_{\varepsilon,0}(\rho) \rho^{\frac{d}{2}} J_{\frac{d-2}{2}}(\rho|\xi|) d\rho \right) \mathcal{F}[\varphi](\xi) d\xi.
	\end{align*}
	where we used Plancherel theorem and the known result in $L^1(\mathbb{R}^d)$. Since the integral
	\begin{align*}
	c(d) |\xi|^{-\frac{d-2}{2}} \int_0^{\infty} f_0(\rho) \rho^{\frac{d}{2}} J_{\frac{d-2}{2}}(\rho|\xi|) d\rho
	\end{align*}
	exists for a.e. $\xi \in \mathbb{R}^d$, by the a.e. pointwise convergence given above $f_{\varepsilon,0}(\rho) \rightarrow f_0(\rho)$ and by dominated convergence theorem we have that
	\begin{align*}
	\mathcal{F}[f](\xi) = c(d) |\xi|^{-\frac{d-2}{2}} \int_0^{\infty} f_0(\rho) \rho^{\frac{d}{2}} J_{\frac{d-2}{2}}(\rho|\xi|) d\rho
	\end{align*}
	for a.e. $\xi \in \mathbb{R}^d$.
	\end{proof}

	\section{Appendix: Homogeneous distributions}\label{app:temp-dist-xalpha+}
Let $\alpha \in \mathbb{C}$ and $\varphi \in \mathcal{S}(\mathbb{R})$. Observe that
\begin{align*}
		\int_0^{\infty} \mathbb{I}_{(1,\infty)}(x) x^{\alpha} \varphi(x) dx
	\end{align*}
	is well-defined for any complex value of $\alpha$. In this section we will compute the inverse distributional Fourier transform of $\mathbb{I}_{(1,\infty)}(x) x^{\alpha}$, i.e. we will prove Lemma \ref{lemma:invft-xalpha-nu}. We need a preliminary result.

\begin{lemma}\label{lemma:limit-complex}
	Let $\theta, \beta \in \mathbb{R}$. Then
	\begin{align*}
	\lim_{\gamma \rightarrow 0^+} (\theta + i \gamma)^{\beta} = \left\{ \begin{array}{lcc} \theta^{\beta} & \text{for} & \theta > 0 \\ \\ (-1)^{\beta}\theta^{\beta} e^{i\beta\pi} & \text{for} & \theta < 0. \end{array} \right.
	\end{align*}
	\end{lemma}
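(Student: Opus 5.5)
The plan is to argue directly from the definition of the complex power via the principal branch of the logarithm. For $z\in\mathbb{C}\setminus(-\infty,0]$ we set $z^{\beta}=\exp(\beta\,\mathrm{Log}\,z)$ with $\mathrm{Log}\,z=\ln|z|+i\,\mathrm{Arg}\,z$ and $\mathrm{Arg}\,z\in(-\pi,\pi)$. For $\gamma>0$ the point $z_\gamma=\theta+i\gamma$ lies in the open upper half plane, so $z_\gamma^{\beta}$ is well defined. Moreover $\mathrm{Arg}\,z_\gamma\in(0,\pi)$, and it is given explicitly by $\mathrm{Arg}\,z_\gamma=\operatorname{arccot}(\theta/\gamma)$, where $\operatorname{arccot}$ takes values in $(0,\pi)$.

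First, the modulus: $|z_\gamma|=(\theta^2+\gamma^2)^{1/2}\to|\theta|$ as $\gamma\to0^+$. Since $\theta\neq0$ in both cases, $\ln|z_\gamma|\to\ln|\theta|$ by continuity of $\ln$ on $(0,\infty)$.

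Second, the argument, which is the only point needing care. The principal $\mathrm{Log}$ is discontinuous across $(-\infty,0)$, so for $\theta<0$ we cannot simply substitute $\gamma=0$. Instead we use the explicit formula:
\begin{itemize}
\item if $\theta>0$, then $\theta/\gamma\to+\infty$ and $\operatorname{arccot}(\theta/\gamma)\to0$;
\item if $\theta<0$, then $\theta/\gamma\to-\infty$ and $\operatorname{arccot}(\theta/\gamma)\to\pi$.
\end{itemize}
So the one-sided limit from the upper half plane exists and picks out the value $\pi$, not $-\pi$.

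Combining the two limits with the continuity of $\exp$, we get $z_\gamma^{\beta}\to\exp(\beta\ln\theta)=\theta^{\beta}$ for $\theta>0$, and $z_\gamma^{\beta}\to\exp(\beta\ln|\theta|+i\beta\pi)=|\theta|^{\beta}e^{i\beta\pi}$ for $\theta<0$.

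Finally, we match notation. In the statement, $(-1)^{\beta}\theta^{\beta}$ for $\theta<0$ is to be read as $(-\theta)^{\beta}=|\theta|^{\beta}$, the real positive power of the positive number $-\theta$. This gives exactly the claimed formula $\lim_{\gamma\to0^+}(\theta+i\gamma)^{\beta}=(-\theta)^{\beta}e^{i\beta\pi}$.

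The main obstacle is therefore only the branch issue on the negative axis, which the explicit $\operatorname{arccot}$ formula resolves. The other point to fix is the convention for $(-1)^{\beta}\theta^{\beta}$, which should be stated at the outset so that the identity is unambiguous.
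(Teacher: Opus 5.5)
Your proof is correct and follows essentially the same route as the paper: both write $(\theta+i\gamma)^{\beta}=\exp\bigl(\beta(\log|\theta+i\gamma|+i\arg(\theta+i\gamma))\bigr)$ and use that the argument tends to $0$ for $\theta>0$ and to $\pi$ for $\theta<0$. Your explicit $\mathrm{arccot}$ formula for that limit, and your reading of $(-1)^{\beta}\theta^{\beta}$ as $|\theta|^{\beta}$ for $\theta<0$, spell out details the paper leaves implicit.
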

	
\begin{proof}
	Observe that
	\begin{align*}
		\lim_{\tau \rightarrow 0^+} &(\theta + i \tau)^{\beta} = \exp[\beta(\log|\theta + i\tau| + i \text{arg}(\theta+i\tau))] \nonumber\\
		&= \left\{ \begin{array}{lcc} \theta^{\beta} & \text{for} & \theta > 0 \\ \\ (-1)^{\beta}\theta^{\beta} e^{i\beta \pi} & \text{for} & \theta < 0, \end{array} \right. 
	\end{align*}
	given that $\text{arg}(\theta + i\tau) \overset{\tau \rightarrow 0^+}{\rightarrow} 0$ for $\theta > 0$ and $\text{arg}(\theta + i\tau) \overset{\tau \rightarrow 0^+}{\rightarrow} \pi$ for $\theta < 0$.
	\end{proof}
	
	\begin{proof}[Proof of Lemma \ref{lemma:invft-xalpha-nu}]
	Consider
	\begin{align*}
		G_{\tau}(x) = x^{\alpha} e^{-\tau x} \mathbb{I}_{(1,\infty)}(x),
	\end{align*}
	for $\tau > 0$, so that we will compute $\mathcal{F}^{-1}[G_{\tau}]$ and take the limit $\tau \rightarrow 0^+$ in $\mathcal{S}'(\mathbb{R})$. Given $\theta \in \mathbb{R}$, write
	\begin{align*}
		\mathcal{F}^{-1}[G_{\tau}](\theta) = \int_{1}^{\infty} x^{\alpha} e^{-\tau x} e^{ix\theta} dx = \int_{1}^{\infty} x^{\alpha} e^{i z x} dx, 
	\end{align*}
	for $z = \theta + i \tau$. This implies that $z$ is in the upper half-plane of $\mathbb{C}$, so that $\text{arg}(z) \in (0,\pi)$. We change variables through the rotation $\xi = - i z x$, so $\text{arg}(\xi) = \text{arg}(z)  - \frac{\pi}{2} \in (-\pi/2,\pi/2)$ and
	\begin{align*}						
		\int_{1}^{\infty} x^{\alpha} e^{i z x} dx = \left( \frac{e^{i\pi/2}}{z} \right)^{\alpha+1} \int_{e^{i(\text{arg}(z)-\pi/2)}(1,\infty)} \xi^{\alpha} e^{-\xi} d\xi.
	\end{align*}
	The next step is to prove that we can remove the rotation $e^{i(\text{arg}(z)-\pi/2)}$ from the integration domain. Consider within the complex plane a contour $L$ surrounding the sector between angles $0$ and $\text{arg}(z)-\pi/2$. This contour departs from radius $1$ and reaches some radius $M>0$ which we will make arbitrarily large eventually. 

Assume without loss of generality that $\text{arg}(z) \in (\frac{\pi}{2},\pi)$, the case $\text{arg}(z) \in (0,\frac{\pi}{2})$ is proved in a similar way. We write $L = L_1 + L_2 + L_3 + L_4$, where
		\begin{align*}
		&L_1 = \{ x : x \in (1,M) \}, \quad L_2 = \{ e^{i(\text{arg}(z)-\pi/2)}(M + 1 - x) : x \in (1,M) \}, \nonumber \\
			&L_3 = \{ M e^{i\gamma} : \gamma \in (0,\text{arg}(z)-\pi/2)\}, \nonumber \\
			& L_4 = \{e^{i(\text{arg}(z) - \pi/2-\gamma)} : \gamma \in (0,\text{arg}(z)-\pi/2)\}.
	\end{align*}
Given that $\xi^{\alpha} e^{-\xi}$ is holomorphic in $\{\xi \in \mathbb C : |\xi| > 0\}$, by Cauchy's theorem we have that
	\begin{align*}
		\int_L \xi^{\alpha} e^{-\xi} d\xi = 0.
	\end{align*}
	The term from that integral corresponding to a constant radius $M$ satisfies
	\begin{align*}
		&\left|\int_{L_3}\xi^{\alpha} e^{-\xi} d\xi \right| = \left| \int_0^{\text{arg}(z)-\pi/2} M^{\alpha+1} e^{i (\alpha + 1) \gamma} e^{-M(\cos \gamma + i \sin \gamma)} i d\gamma \right|  \nonumber\\
		&\leq M^{\text{Re}(\alpha) + 1} \left( \text{arg}(z)-\frac{\pi}{2} \right)\sup_{\gamma \in (0,\text{arg}(z)-\pi/2)} e^{- M \cos(\gamma)},
	\end{align*}
	which tends to $0$ as $M \rightarrow \infty$.
	 
	On the other hand, the sector of constant radius $1$ satisfies
	\begin{align*}
		&\left|\int_{L_4}\xi^{\alpha} e^{-\xi} d\xi \right| \nonumber \\
		&= \left| \int_0^{\text{arg}(z) - \pi/2}  e^{-i (\alpha + 1) \gamma} e^{-(\cos(\text{arg}(z) - \pi/2-\gamma) + i \sin(\text{arg}(z) - \pi/2-\gamma))} i d\gamma \right|  \nonumber\\
		&\leq  \left( \text{arg}(z) - \frac{\pi}{2} \right) \sup_{\gamma \in (0,\text{arg}(z)-\frac{\pi}{2})} e^{- \cos(\text{arg}(z) - \pi/2-\gamma)} \leq \frac{\pi}{2}.
	\end{align*}
	Therefore,
	\begin{align*}
	\mathcal{F}^{-1}[G_{\tau}](\theta) = \left(\int_0^{\infty} \xi^{\alpha} e^{-\xi} d\xi - \int_0^1 \xi^{\alpha} e^{-\xi} d\xi - \int_{L_4}\xi^{\alpha} e^{-\xi} d\xi \right)\left( \frac{e^{i\pi/2}}{\theta + i \tau} \right)^{\alpha+1}.
	\end{align*}
	Observe that
\begin{align*}
\max\left\{ \left|\int_0^{\infty} \right. \right. & \left. \left. e^{-\xi} \xi^{\alpha} d\xi \right| , \left|\int_0^{1} e^{-\xi} \xi^{\alpha} d\xi \right| \right\} \\
&\leq \int_0^{\infty} e^{-\xi} \xi^{\text{Re}(\alpha)} d\xi \leq \Gamma(\text{Re}(\alpha) + 1),
\end{align*}
where the latter is well-defined as long as we consider $\text{Re}(\alpha) \notin -\mathbb{N}$ and an analytic continuation of $\Gamma$ (see e.g. \cite{steinshak-complex}).
All in all,
\begin{align}\label{eq:ftransform-x+exp-translation}
\mathcal{F}^{-1}[G_{\tau}](\theta) = J(\alpha)  \left( \frac{e^{i\pi/2}}{\theta + i \tau} \right)^{\alpha+1},
\end{align}
where $J(\alpha)$ is a constant only depending on $\alpha$ such that
\begin{align*}
|J(\alpha)| \leq  \Gamma(\text{Re}(\alpha)+1) + \frac{\pi}{2}.
\end{align*}
If we take the limit $\tau \rightarrow 0^+$ in $\mathcal{S}'(\mathbb{R})$, from Lemma \ref{lemma:limit-complex} we have that
\begin{align}\label{eq:ftransform_x+-translation}
&\mathcal{F}^{-1}[x^{\alpha}\mathbb{I}_{(1,\infty)}(x)](\theta) \nonumber \\
&= J(\alpha)  [e^{i\pi(\alpha+1)/2} (\theta \mathbb{I}_{(0,\infty)}(\theta))^{-\alpha-1} + e^{-i\pi(\alpha+1)/2} (|\theta| \mathbb{I}_{(-\infty,0)}(\theta))^{-\alpha-1}].
\end{align}
\end{proof}

\section*{Acknowledgments}
	The author is supported by the Basque Government through the program BERC 2026-2029 (BCAM), by the project PID2024-156169NB-I00 (NCAFA), by the Severo Ochoa accreditation CEX2021-001142-S (BCAM), and by the predoctoral program of the Education Department of the Basque Government.
	
	The author thanks Nikolay Tzvetkov for proposing this interesting problem during his stay at L'École Normale Supérieure de Lyon, as well as his PhD supervisor Renato Lucà for useful advices.



\begin{thebibliography}{10}
	
		\bibitem{AT08}
		A.~Ayache and N.~Tzvetkov.
		\newblock The $L^p$ properties of Gaussian random series.
		\newblock {\em Trans. Am. Math.}, 360(8):4425–-4439, 2008.

		
		
		
		\bibitem{Bogachev1998}
		V.I.~Bogachev.
		\newblock {\em Gaussian measures}.
		\newblock Mathematical surveys and monographs, 62 (American Mathematical Society, Providence, RI, 1998).
    	
		\bibitem{BurqTzvet}
		N.~Burq and N.~Tzvetkov.
		\newblock Random data {C}auchy theory for supercritical wave equations. {I}.
		{L}ocal theory.
		\newblock {\em Invent. Math.}, 173(3):449--475, 2008.
		
		\bibitem{CG23}
		N.~Camps and L.~Gassot.
		\newblock Pathological set of initial data for scaling-supercritical nonlinear Schrödinger equations.
		{L}ocal theory.
		\newblock {\em Int. Math. Res. Not.}, 2023(15):13214--13254, 2023.
		
		\bibitem{PZ14}
		G.~Da Prato and J.~Zabczyk.
		\newblock {\em Stochastic Equations in Infinite Dimensions}, 				2nd Edition (Cambridge University Press, 2014).
		
		\bibitem{fernique}
		X.~Fernique.
		\newblock Regularité des trajectoires des fonctions aléatoires Gausiennes.
		\newblock {\em Ecole d’Eté de Probabilités de St. Flour IV-1974. Lecture Notes in Mathematics}, 480, 1--96, 1975.
		
		\bibitem{GS64}
		I.M.~Gelfand and G.E.~Shilov.
		\newblock {\em Generalized functions, Vol. I: Properties and Operations}.
		\newblock Academic Press (American Mathematical Society, Providence, RI, 1964).
		

		\bibitem{GV95}
		J.~Ginibre and G.~Velo.
		\newblock Generalized Strichartz Inequalities for the Wave Equation.
		\newblock {\em Partial Differential Operators and Mathematical Physics. Oper. Theory Adv. Appl.}, 78, 1995.
		
		\bibitem{grafakos_kp}
		L.~Grafakos and S.~Oh.
		\newblock The Kato-Ponce inequality.
		\newblock {\em Commun. Partial Differ. Equ.}, 39(6):1128–-1157, 2014.	
		
		
		
		\bibitem{GOTW22}
		T.~Gunaratnam, T.~Oh, N.~Tzvetkov and H.~Weber.
		\newblock Quasi-invariant Gaussian measures for the nonlinear wave equation in three dimensions.
		\newblock {\em Probab. Math. Phys.}, 3(2):343–-379, 2022.
		
		\bibitem{Miyachi1980}
		A.~Miyachi.
		\newblock On some estimates for the wave equation in $L^p$ and $H^p$.
		\newblock {\em J. Fac. Sci. Univ. Tokyo Sect. IA Math.}, 27, 1980.		
		
		
	
	
		
		\bibitem{MS1}
		C.~Muscalu, W.~ Schlag.
		\newblock {\em Classical and Multilinear Harmonic Analysis}.
		\newblock  Cambridge University Press (Cambridge Studies in Advanced Mathematics, 2013).	
		

\bibitem{Peral1980}
		J.C.~Peral.
		\newblock $L^p$ estimates for the wave equation.
		\newblock {\em J. of Funct. Anal.}, 36(1):114--145, 1980.
	
		
		\bibitem{SSS91}
		A. Seeger, C.D. Sogge and E.M.~Stein.
		\newblock {\em Regularity Properties of Fourier Integral Operators}.
		\newblock Ann. Math., 134(2): 231--251, 1991.
	

		\bibitem{soggeNLW}
		C.D.~Sogge.
		\newblock {\em Lectures on Non-Linear Wave Equations}.
		\newblock 2nd Edition (International Press of Boston, Inc. 2008).	
		
		\bibitem{S70}
		E.M.~Stein.
		\newblock {\em Singular Integrals and Differentiability Properties of Functions}.
		\newblock Princeton Mathematical Series. Princeton University Press, 1970.
		
		\bibitem{steinha}
		E.M.~Stein and T.S.~Murphy.
		\newblock {\em Harmonic Analysis (PMS-43): Real-Variable Methods, Orthogonality, and Oscillatory Integrals}.
		\newblock Princeton Mathematical Series (Princeton University Press, 1993).
		
		\bibitem{steinshak-complex}
		E.M.~Stein and R.~Shakarchi.
		\newblock {\em Complex Analysis}.
		\newblock Princeton Lectures in Analysis, No. 2. (Princeton University Press, 2003).
		
		\bibitem{steinweiss}
		E.M.~Stein and G.~Weiss.
		\newblock {\em Introduction to Fourier Analysis on Euclidean Spaces}.
		\newblock Princeton Mathematical Series (Princeton University Press, 1971).
		
		
		\bibitem{ST20}
		C.~Sun and N.~Tzvetkov.
		\newblock Concerning the pathological set in the context of probabilistic well-posedness.
		\newblock {\em C. R. Math.}, 358(9-10):989–-999, 2020.	
		
		\bibitem{Tzvetkov2019-notes}
		M.~Gubinelli, P.~E.~Souganidis, and N.~Tzvetkov.
		\newblock In {\em Singular random dynamics ({C}etraro, 2016)}
		
		
		\bibitem{W65}
		S.~Wainger.
		\newblock {\em Special Trigonometric Series in k - dimensions}.
		\newblock Memoirs of the American Mathematical Society, 1965.	
		
		\bibitem{watson}
		G.N.~Watson.
		\newblock {\em A Treatise on the Theory of Bessel Functions}.
		\newblock Cambridge Mathematical Library (Cambridge University Press, 1922).
		
		\bibitem{wolff}
		T.~Wolff.
		\newblock {\em Lectures on Harmonic Analysis}.
		\newblock AMS University Lecture Notes, 2003.
		
	\end{thebibliography}
\end{document}